\newcommand{\dbar}{\ensuremath{\overline\partial}}
\newcommand{\dbarstar}{\ensuremath{\overline\partial^*}}
\newcommand{\C}{\ensuremath{\mathbb{C}}}
\newcommand{\B}{\ensuremath{\mathbb{B}}}
\def\scriptf{{\mathcal F}}
\def\dom{\operatorname{dom}\,}
\def\kernel{\operatorname{\mathcal N}\,}
\def\range{\operatorname{\mathcal R}\,}
\newcommand{\sumprime}{\if@display\sideset{}{'}\sum%
	\else\sum'\fi}
\newtheorem{thm}{Theorem}[section]
\newtheorem{prop}[thm]{Proposition}
\newtheorem{lem}[thm]{Lemma}
\newtheorem{cor}[thm]{Corollary}
\theoremstyle{definition}
\theoremstyle{remark}
\newtheorem{rem}[thm]{Remark}
\numberwithin{equation}{section}
\providecommand\ufootnote[1]{{\let\thefootnote\relax\footnote[0]{#1}}}
\newcommand{\cc}{\mathcal C}
\newcommand{\dc}{\mathcal D}
\newcommand{\ec}{\mathcal E}
\newcommand{\oc}{\mathcal O}
\newcommand{\nb}{\mathbb N}
\newcommand{\cb}{\mathbb C}
\newcommand{\zb}{\mathbb Z}
\newcommand{\ci}{{\mathcal C}^\infty}
\newcommand{\ol}{\overline}
\newcommand{\opa}{\ol\partial}
\newcommand{\wt}{\widetilde}
\DeclareMathOperator{\supp}{supp} \DeclareMathOperator{\im}{Im}
\DeclareMathOperator{\ke}{Ker}
 \DeclareMathOperator{\loc}{loc}
\begin{document}

\title{Hearing pseudoconvexity in Lipschitz domains with holes via $\bar{\partial}$}

\author{Siqi Fu, Christine Laurent-Thi\'{e}baut, and Mei-Chi Shaw}

\address{S. Fu, Department of Mathematical Sciences,
	Rutgers University, Camden, NJ 08102, USA} \email{sfu@rutgers.edu}

\address{C. Laurent-Thi\'{e}baut, Institut Fourier, Universit{\'e} Grenoble-Alpes, 38041 Grenoble, France}
\email{Christine.Laurent@univ-grenoble-alpes.fr}
\address{M.-C. Shaw, Department of Mathematics, University of Notre Dame, Notre Dame, IN 46556, USA}
\email{Mei-Chi.Shaw.1@nd.edu}
\date{}
\thanks{All three authors were partially supported by a grant from the  AGIR program of Grenoble INP  and Universit{\'e} Grenoble-Alpes, awarded to Christine Laurent-Thi\'{e}baut.  Siqi Fu and Mei-Chi Shaw were supported in part by NSF grants}

\begin{abstract} Let $\Omega=\widetilde{\Omega}\setminus \overline{D}$ where $\widetilde{\Omega}$ is a bounded domain with connected complement in $\C^n$ (or more generally in a Stein manifold) and $D$ is relatively compact open subset of $\widetilde{\Omega}$ with connected complement in $\wt{\Omega}$. We obtain characterizations of pseudoconvexity of $\widetilde{\Omega}$ and $D$ through the vanishing or Hausdorff property of the Dolbeault cohomology groups on various function spaces. In particular, we show that if the boundaries of $\widetilde{\Omega}$ and $D$ are Lipschitz and $C^2$-smooth respectively, then both $\widetilde{\Omega}$ and $D$ are pseudoconvex if and only if $0$ is not in the spectrum of the $\dbar$-Neumann Laplacian on $(0, q)$-forms for $1\le q\le n-2$ when $n\ge 3$; or $0$ is not a limit point of the spectrum of the $\dbar$-Neumannn Laplacian on $(0, 1)$-forms when $n=2$.
	
\bigskip
	
\noindent{{\sc Mathematics Subject Classification} (2000): 32C35, 32C37, 32W05.}
	
	\smallskip
	
	\noindent{{\sc Keywords}: Dolbeault cohomology, $L^2$-Dolbeault cohomology, Serre duality, $\dbar$-Neumann Laplacian, pseudoconvexity.}
\end{abstract}

\maketitle

\tableofcontents

\section{Introduction}\label{sec:intro}

The classical Theorem B of H. Cartan states that for any coherent analytic sheaf $\scriptf$ over a Stein manifold $X$, the sheaf cohomology groups $H^q(X, \scriptf)$ vanish for
all $q\ge 1$ (cf. \cite[Theorem~7.4.3]{Hormander90}).
The converse is also true (\cite[p.~65]{Serre53}; see \cite[pp.~86--89]{Hormander90} for a proof of this equivalence).  The $\opa$-complexes of the smooth forms, the  $L^2_{loc}$ forms, and the currents on an open set $\Omega$ in a complex manifold are all resolutions of the sheaf $\oc$ of germs of holomorphic functions. As a consequence, the Dolbeault cohomology groups obtained through these resolutions are all isomorphic to $H^q(\Omega,\oc)$ and will all be denoted by $H^{0, q}(\Omega)$. However, this isomorphism does not hold in general if one considers the Dolbeault cohomology groups for the $\opa$-complexes acting on function spaces with some growth or regularity properties on the boundary of the open set.

When $\Omega$ is a relatively compact pseudoconvex domain in a Stein manifold, it follows from H\"{o}r\-man\-der's $L^2$-existence theorem for the $\dbar$-operator that the $L^2$-Dolbeault cohomology groups $H^{0, q}_{L^2}(\Omega)$ vanish for $q\ge 1$. The converse of H\"{o}rmander's theorem also holds, under the assumption that the interior of the closure of $\Omega$ is the domain $\Omega$ itself.  Sheaf theoretic arguments for the Dolbeault cohomology groups (\cite{Serre53, Laufer66, Siu67, Broecker83, Ohsawa88}) can be modified to give a proof of this fact (see, e.g., \cite{Fu05, Fu10}).   We remark that some regularity of the boundary is necessary in order to characterize pseudoconvexity by the $L^2$-Dolbeault cohomology.
 The Dolbeault isomorphism also fails to hold between the usual Dolbeault cohomology groups and the $L^2$-Dolbeault cohomology groups.  For example, on the unit ball in $\C^2$ minus the center, the $L^2$-Dolbeault cohomology group $H^{0, 1}_{L^2}(\B^2\setminus\{0\})$ is trivial but the classical Dolbeault cohomology group $H^{0, 1}(\B^2\setminus\{0\})$ is infinite dimensional.

  It follows from  the work of Kohn \cite{Ko74} that, if $\Omega$ is a bounded pseudoconvex  domain in $\C^n$ with smooth boundary, then the Dolbeault cohomology groups $H^{0, q}_\infty(\overline\Omega)$ with   forms smooth up to the boundary
also vanish for all $q>0$ and the converse is also true.   But this does not hold   in general if the boundary of the domain is not smooth. For example, on the Hartogs' triangle ${\mathbb T}=\{(z, w)\in\C^2; \ |z|<|w|<1\}$, by \cite{Si}, we have that $H^{0,1}({\mathbb T})$ is trivial but $H^{0,1}_\infty(\ol{\mathbb T})$, the Dolbeault cohomology of forms with coefficients smooth up to the boundary,  is  infinite dimensional. In fact, it is not  even Hausdorff (see \cite{LS2015}).
In general, the Dolbeault cohomology groups with some growth or regularity properties  up to the boundary are not the same as the usual Dolbeault cohomology.

Andreotti and Grauert \cite{AnGr} showed that, for a relatively compact domain $\Omega$ with smooth boundary in a complex manifold that is strictly pseudoconvex (or more generally, satisfies Condition $a_q$\footnote{Recall that the boundary $b\Omega$ satisfies condition $a_q$ if the Levi form of a defining function has either at
least $q + 1$ negative eigenvalues or at least $n-q$ positive eigenvalues at every boundary point}), the Dolbeault cohomology group $H^{p,q}(\Omega)$ is finite dimensional. Furthermore,  by Grauert's bumping methods, the usual Dolbeault cohomology group is isomorphic to the Dolbeault cohomology group with regularity up to the boundary (see, e.g., \cite{HeLe2}).    It follows from H\"{o}rmander's work (\cite[\S~3.4]{Hormander65}) that on a bounded domain $\Omega$ with $C^3$-smooth boundary in a complex manifold, if the boundary $b\Omega$ satisfies the $a_q$ and $a_{q+1}$ conditions, then $H^{p, q}_{L^2}(\Omega)$ is isomorphic to $H^{p, q}(\Omega)$.  (See \cite{Ohsawa82} for relevant results.) In particular, when $\Omega$ is an annulus  between two $C^3$-smooth strictly pseudoconvex domains in a complex manifold,  the $L^2$ cohomology group  $H^{0, q}_{L^2}(\Omega)$ is finite dimensional when $0<q< n-1$.
When the domain $\Omega$ is an annulus between two weakly pseudoconvex domains with $C^3$ smooth boundary in $\C^n$, similar results  are proved in    \cite{Shaw85}.
The regularity of the boundary can be relaxed by assuming only that  $\Omega$ has $C^2$ boundary (see \cite{Shaw10}). In this case,   $H^{p, q}_{L^2}(\Omega)=0$  for all  $q\neq 0$
and $q\neq n-1$.

In this paper, we study the Dolbeault cohomolgy groups on various function spaces for a bounded domain $\Omega$ in $\C^n$ (or more generally in a Stein manifold) in the form of $\Omega=\widetilde{\Omega}\setminus \overline{D}$ where $\widetilde{\Omega}$ is a bounded domain with connected complement in $\C^n$ and $D$ is relatively compact open subset of $\widetilde{\Omega}$ with connected complement and with finitely many connected components. We obtain characterizations of pseudoconvexity of $\widetilde{\Omega}$ and $D$ through vanishing or Hausdorff property of the Dolbeault cohomology groups on various function spaces, including those of forms with coefficients smooth up to the boundary and of extendable currents.
In particular, we consider  $L^2$-Dolbeault cohomology groups and spectral theory for
$\opa$-Neumann Laplacian on the domain $\Omega$. We show that if  the outer boundary $b\widetilde{\Omega}$ is Lipschitz
 and the inner boundary $bD$ is $C^2$-smooth, then both $\widetilde{\Omega}$ and $D$ are pseudoconvex if and only if $0$ is not in the spectrum of the $\dbar$-Neumann Laplacian on $(0, q)$-forms for $1\le q\le n-2$ when $n\ge 3$; or $0$ is not a limit point for the spectrum of the $\dbar$-Neumannn Laplacian on $(0, 1)$-forms when $n=2$ (see Corollary \ref{eqL2},
 Theorems \ref{th:positivity1}, and  \ref{th:chaS} below).

 An earlier result in this spirit is due to Trapani in \cite{Tr3}. In his case, vanishing and Hausdorff properties of
the classical Dolbeault cohomology groups characterize the holomorphic convexity of $\ol D$; not just the pseudoconvexity of $D$.   Our results  and methods are different from the results in \cite{Tr3} (see the remark at the end of section 3).

The plan of the paper is as follows: In Section~2, we first recall the classical Serre duality and the definitions of various Dolbeault cohomology groups and their duals. In Section~3, we study the Dolbeault cohomology groups on forms that are smooth up to the boundary and on extendable currents. We obtain necessary and sufficient
conditions for the vanishing or Hausdorff properties of these Dolbeault cohomology groups.
Sections 4 and 5 are devoted to the $L^2$--theory for $\dbar$ on such a domain $\Omega$.  In Section 4, we study the relationship between the Dolbeault cohomology groups of $\Omega$ and those of  $\wt\Omega$ and $D$ on $L^2$ or $W^1$ spaces.  In Section 5, we establish necessary and sufficient conditions for such domains $\Omega$ that have vanishing $L^2$ or  $W^1$--Dolbeault cohomology groups. In particular, we show that when the boundary of $D$ is Lipschitz, the Dolbeault cohomology groups with $W^1_{loc}$--coefficients  are vanishing for $1\leq q<n-1$ and    Hausdorff for $q=n-1$  if and only if both $\wt\Omega$ and $D$  are peudoconvex (see Corollary \ref{eqW1}). When the inner boundary $bD$ is $\cc^2$ and the outer boundary $b\wt\Omega$ is Lipschitz, we obtain a characterization of pseudoconvexity of $\wt\Omega$ and $D$ by the vanishing and Hausdorff properties of the $L^2$--Dolbeault cohomology groups, which implies the above-mentioned statement that one can determine pseudoconvexity of $\wt\Omega$ and $D$ from spectral properties of the $\bar{\partial}$-Neumann Laplacian on $\Omega$.

\bigskip
\section{Dolbeault cohomology and the Serre duality}

Let $X$ be an $n$-dimensional complex manifold.
If $D\subset\subset X$ is a relatively compact subset of $X$, we  consider  the  $\opa$-complexes and their dual complexes  associated with  several spaces of forms  attached to $D$  (see section 2 in \cite{LaShdualiteL2} for more details)

We first recall definitions of several function spaces and their duals. Let $\ec(D)$ be the space  of $\ci$-smooth functions on $D$ with its classical Fr\'echet topology. It is well known that its dual can be identified  with the space $\ec'(X)$ of distributions with compact support in $D$. We will also consider the space $\ci(\ol D)$ of smooth functions on the closure of $D$; this is the space of the restrictions to $\ol D$ of $\ci$-smooth functions on $X$. It can also be identified  with the quotient of the space of $\ci$-smooth functions on $X$ by the ideal of functions that vanish with all their derivatives on $\ol D$. We endow $\ci(\ol D)$ with the Fr\'echet topology induced by $\ec(X)$. If $D$ has  Lipschitz boundary, the dual space of $\ci(\ol D)$ is the space $\ec'_{\ol D}(X)$ of distributions on $X$ with support contained in $\ol D$.  In general, when the boundary of $D$ is not necessarily Lipschitz, then  we only have that the   dual space of  $\ci(\ol D)$  is always a subspace of $\ec'_{\ol D}(X)$. We refer the reader to  \cite{LaShdualiteL2} for details.

Denote by $\dc_{\ol D}(X)$  the subspace of $\dc(X)$ consisting of  functions with support in $\ol D$, endowed with the natural Fr\'echet topology.
If $D$ has Lipschitz boundary, the dual of $\dc_{\ol D}(X)$ coincides with the space of restrictions to $D$ of distributions on $X$. This dual is called the space of extendable distribution on $D$ and will be denoted by $\check\dc'(\ol D)$. Moreover $\dc_{\ol D}(X)$ is a Montel space as a closed subspace of the Montel space $\ec(X)$ and hence reflexive, which implies that the dual space of $\check\dc'(\ol D)$, endowed with the strong dual topology, is the space $\dc_{\ol D}(X)$.  As before, when the boundary of $D$ is not Lipschitz,  we only have that the   dual space of $\check\dc'(\ol D)$   is a subspace of $\dc_{\ol D}(X)$.
\medskip

Recall that a \emph{cohomological complex of topological vector spaces} is a pair $(E^\bullet,\ d)$, where $E^\bullet=(E^q)_{q\in\zb}$ is a sequence of topological vector spaces and $d=(d^q )_{q\in\zb}$ is a sequence of densely defined closed linear maps $d^q$ from $E^q$ into $E^{q+1}$ that satisfy $d^{q+1}\circ d^q=0$. To any cohomological complex $(E^\bullet,\ d)$we associate cohomology groups $(H^q(E^\bullet))_{q\in\zb}$ defined by

$$H^q(E^\bullet)=\ker d^q/\im d^{q-1}$$
and endowed with the quotient  topology.
We fix $0\le p\le n$ and set  $E^q=0$ and $d^q\equiv 0$, if $q<0$ and $0\leq p\le n$.   We consider the $\bar{\partial}$-complex $(E^\bullet,\ d)$ with $d^q=\bar\partial$ if $0\le q\le n$ and $d^q\equiv 0$ if $q>n$ acting on:

\begin{enumerate}
	\item  $E^q=\ec^{p,q}(D)$, the space of $\ci$-smooth $(p,q)$-forms on $D$.
	
	\item   $E^q=\ci_{p,q}(\ol D)$, the space of $\ci$-smooth $(p,q)$-forms on $\ol D$.
		
	\item   $E^q=\check\dc'^{p,q}(\ol D)$, the space of extendable $(p,q)$-currents on $D$.
\end{enumerate}

The associated cohomology groups with  (1)-(3)   are denoted respectively by $H^{p,q}(D)$, $H^{p,q}_\infty(\ol D)$,   and $\check{H}^{p,q}(\ol D)$.

The dual complex of a cohomological complex $(E_\bullet,d)$ of topological vector spaces is the homological complex $(E'_\bullet,d')$,  where $E_\bullet=(E'_q)_{q\in\zb}$ with $E'_q$ the strong dual of $E^q$ and $d'=(d'_q )_{q\in\zb}$ with $d'_q$ the transpose of the map $d^q$.

Set $E'_q=0$ and $d'_q\equiv 0$ if $q<0$. If the domain $D$ has Lipschitz boundary, then the dual complexes of the previous cohomological complexes are $(E'_\bullet,d')$ with $d'_q=\opa$ for $0\leq q\leq n$ and:
\begin{enumerate}
	\item  $E'_q=\ec'^{n-p,n-q}(D)$, the space of currents with compact support in $D$.
	
	\item  $E'_q=\ec'^{n-p,n-q}_{\ol D}(X)$, the space of currents with compact support in $X$ whose support is contained in $\ol D$.

	\item   $E'_q=\dc_{\ol D}^{n-p,n-q}(X)$, the space of $\ci$-smooth forms with compact support in $\ol D$.
\end{enumerate}
 The associated cohomology groups are denoted respectively by $H^{n-p,n-q}_c(D)$, $H^{n-p, n-q}_{\ol D,cur}(X)$    and $H^{n-p, n-q}_{\ol D,\infty}(X)$.

The next proposition is a direct consequence of the Hahn-Banach Theorem.
\begin{prop}\label{adh}
	Let $(E^\bullet,d)$  and $(E'_\bullet,d')$ be two dual complexes, then
	$$\ol{\im d^q}=\{g\in E^{q+1}~|~\langle g,f\rangle =0, \ \forall f\in \ke d'_q\}.$$
\end{prop}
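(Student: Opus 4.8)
The plan is to prove the two inclusions separately: the inclusion $\ol{\im d^q}\subseteq\{g\mid\langle g,f\rangle=0,\ \forall f\in\ke d'_q\}$ by a direct computation, and the reverse inclusion by the Hahn--Banach separation theorem, which is the only place where the local convexity of the spaces enters. Throughout I keep the indexing convention of the excerpt: $d'_q$ is the transpose of $d^q\colon E^q\to E^{q+1}$, so $d'_q\colon E'_{q+1}\to E'_q$ and $\ke d'_q$ is a subspace of $E'_{q+1}=(E^{q+1})'$. The set on the right-hand side is then the pre-annihilator of $\ke d'_q$ inside $E^{q+1}$, and the pairing $\langle g,f\rangle$ for $g\in E^{q+1}$, $f\in E'_{q+1}$ is exactly the duality pairing, with the transpose relation $\langle d^qu,f\rangle=\langle u,d'_qf\rangle$ for $u\in\dom d^q$.

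First I would establish the easy inclusion. For $u\in\dom d^q$ and $f\in\ke d'_q$, the defining relation of the transpose gives $\langle d^qu,f\rangle=\langle u,d'_qf\rangle=0$, so every element of $\im d^q$ annihilates $\ke d'_q$. Since for each fixed $f$ the functional $g\mapsto\langle g,f\rangle$ is continuous on $E^{q+1}$, the set $\{g\mid\langle g,f\rangle=0,\ \forall f\in\ke d'_q\}$ is an intersection of kernels of continuous functionals, hence a closed subspace; taking closures therefore preserves the inclusion and yields $\ol{\im d^q}\subseteq\{g\mid\langle g,f\rangle=0,\ \forall f\in\ke d'_q\}$.

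For the reverse inclusion I would argue by contraposition. Suppose $g\in E^{q+1}$ does not lie in the closed subspace $\ol{\im d^q}$. Because $E^{q+1}$ is locally convex, the Hahn--Banach separation theorem produces a continuous linear functional $f\in(E^{q+1})'=E'_{q+1}$ that vanishes identically on $\ol{\im d^q}$ while $\langle g,f\rangle\neq 0$. The point is then to recognize $f$ as an element of $\ke d'_q$: since $f$ kills $\im d^q$, the functional $u\mapsto\langle d^qu,f\rangle$ is identically zero on $\dom d^q$, in particular continuous, so by the very definition of the transpose $f\in\dom d'_q$ and $\langle u,d'_qf\rangle=0$ for all $u\in\dom d^q$. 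As $d^q$ is densely defined, $\dom d^q$ is dense in $E^q$, whence $d'_qf=0$, i.e. $f\in\ke d'_q$. But then $g$ cannot annihilate $\ke d'_q$, since $\langle g,f\rangle\neq 0$, which establishes the contrapositive and hence $\{g\mid\langle g,f\rangle=0,\ \forall f\in\ke d'_q\}\subseteq\ol{\im d^q}$.

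I expect the only delicate point to be the passage through the transpose of an operator that is merely densely defined and closed rather than continuous: one must use the density of $\dom d^q$ twice, once to ensure $f\in\dom d'_q$ and again to deduce $d'_qf=0$ from the vanishing of $u\mapsto\langle u,d'_qf\rangle$ on the domain. Everything else is the standard duality between closure of the range and the kernel of the adjoint. Since the concrete spaces $\ec^{p,q}(D)$, $\ci_{p,q}(\ol D)$, $\check\dc'^{p,q}(\ol D)$ and their strong duals are all locally convex, Hahn--Banach applies with no restriction, and for this statement no completeness or reflexivity hypothesis is required.
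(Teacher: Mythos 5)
Your proof is correct and follows exactly the route the paper intends: the paper gives no details and simply states that the proposition is a direct consequence of the Hahn--Banach theorem, which is precisely the separation argument you carry out for the nontrivial inclusion. Your added care with the transpose of the densely defined closed operator $d^q$ (using density of $\dom d^q$ to get $f\in\dom d'_q$ with $d'_qf=0$) fills in the only point the paper leaves implicit.
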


Let us recall the main result of the Serre duality (see \cite{Ca},\cite{Se} and \cite{LaLedualite}):

\begin{thm}\label{serre}
	Let $(E^\bullet,d)$ and $(E'_\bullet,d')$ be two dual complexes. Assume $H_{q+1}(E'_\bullet)=0$, then either $H^{q+1}(E^\bullet)=0$ or $H^{q+1}(E^\bullet)$ is not Hausdorff.
	
	If $(E^\bullet,d)$ is a complex of Fr\'echet-Schwartz spaces or of dual of Fr\'echet-Schwartz spaces, then, for any $q\in\zb$, $H^{q+1}(E^\bullet)$ is Hausdorff if and only if $H_{q}(E'_\bullet)$ is Hausdorff.
\end{thm}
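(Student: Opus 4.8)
The plan is to translate each statement into an assertion about closed ranges and then to exploit the annihilator identity of Proposition~\ref{adh} together with the duality between a closed operator and its transpose. Since every $d^q$ (and hence every $d'_q$) is closed, the kernels $\ker d^q$ and $\ker d'_q$ are closed, and because $\im d^{q-1}\subseteq\ker d^q$ and $\im d'_q\subseteq\ker d'_{q-1}$, the group $H^{q}(E^\bullet)$ (resp.\ $H_q(E'_\bullet)$) is Hausdorff if and only if $\im d^{q-1}$ is closed in $E^q$ (resp.\ $\im d'_q$ is closed in $E'_q$). For the first assertion I would argue directly: assuming $H_{q+1}(E'_\bullet)=0$, that is $\ker d'_q=\im d'_{q+1}$, suppose $H^{q+1}(E^\bullet)$ is Hausdorff, so that $\im d^q=\ol{\im d^q}$. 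By Proposition~\ref{adh} this common space equals $\{g\in E^{q+1}\mid \langle g,f\rangle=0\ \forall f\in\ker d'_q\}$. Now take any $g\in\ker d^{q+1}$; for $f\in\ker d'_q=\im d'_{q+1}$, write $f=d'_{q+1}h$ and use the defining property of the transpose together with $d^{q+1}g=0$ to get $\langle g,f\rangle=\langle g,d'_{q+1}h\rangle=\langle d^{q+1}g,h\rangle=0$. Hence $\ker d^{q+1}\subseteq\im d^q$, and since the opposite inclusion always holds, $H^{q+1}(E^\bullet)=0$. This is the short, self-contained half.

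For the second assertion the reduction above shows that $H^{q+1}(E^\bullet)$ is Hausdorff exactly when $\im d^q$ is closed in $E^{q+1}$, while $H_q(E'_\bullet)$ is Hausdorff exactly when $\im d'_q=\im({}^td^q)$ is closed in $E'_q$. Thus everything comes down to the closed range duality for the single operator $d^q$: its range is closed if and only if the range of its transpose is closed. I would first remove the distinction between weak-$*$ and strong closedness. Since FS and DFS spaces are reflexive, the strong dual of $E^q$ has $E^q$ itself as its dual, so by Mazur's theorem a linear subspace of $E'_q$ is strongly closed if and only if it is closed for the weak topology $\sigma(E'_q,E^q)$. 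It therefore suffices to prove that $\im d^q$ is closed if and only if $\im d'_q$ is $\sigma(E'_q,E^q)$-closed.

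This last equivalence is the Banach closed range theorem, and it is where the main obstacle lies. In the cases relevant to this paper the differential $\opa$ is a continuous, everywhere-defined map on each of the spaces $\ec^{p,q}(D)$, $\ci_{p,q}(\ol D)$, and $\check\dc'^{p,q}(\ol D)$, so $d^q$ is a continuous linear map of Fréchet (resp.\ DFS) spaces and the classical closed range theorem for continuous maps and their transposes applies directly. For a genuinely unbounded closed densely defined $d^q$ one reduces to this case by equipping $\dom d^q$ with its graph topology, under which it is again FS (resp.\ DFS) because $d^q$ is closed and on which $d^q$ becomes continuous, and then identifying the transpose of this continuous operator with $d'_q$. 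The delicate point is precisely this closed range theorem and the careful matching of the strong and weak-$*$ closedness of the image of the transpose; it is here that the Schwartz property is essential, guaranteeing that the class of spaces is stable under passage to strong duals and that the required duality relations between ranges and kernels hold.
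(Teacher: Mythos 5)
First, note that the paper does not actually prove Theorem~\ref{serre}: it is quoted from the literature (see \cite{Ca}, \cite{Se}, \cite{LaLedualite}), so your argument can only be measured against the standard proofs there. Your proof of the first assertion is correct and complete, and it is the expected argument: Hausdorffness of $H^{q+1}(E^\bullet)$ forces $\im d^q=\ol{\im d^q}$, Proposition~\ref{adh} identifies this closure with the annihilator of $\ker d'_q$, and the hypothesis $\ker d'_q=\im d'_{q+1}$ together with the transposition identity $\langle g, d'_{q+1}h\rangle=\langle d^{q+1}g,h\rangle$ shows that every element of $\ker d^{q+1}$ lies in that annihilator, whence $\ker d^{q+1}=\im d^q$. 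Your reduction of the second assertion to the single closed-range equivalence ``$\im d^q$ closed in $E^{q+1}$ iff $\im d'_q$ closed in $E'_q$'', and your treatment of it when $d^q$ is continuous and everywhere defined (the closed range theorem for continuous maps of Fr\'echet spaces, plus reflexivity and Mazur's theorem to pass between weak-$*$ and strong closedness of the convex set $\im d'_q$), are also sound; this already covers every FS/DFS complex actually used in the paper, since $\opa$ is continuous on $\ec^{p,\bullet}(D)$, $\ci_{p,\bullet}(\ol D)$, $\check\dc'^{p,\bullet}(\ol D)$ and on the dual complexes.

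The gap is in your last step, the reduction of a genuinely unbounded closed densely defined $d^q$ to the continuous case. If $G$ denotes $\dom d^q$ with the graph topology and $j\colon G\to E^q$ is the inclusion, the transpose of the continuous map $d^q|_G\colon G\to E^{q+1}$ is an everywhere-defined map $E'_{q+1}\to G'$, whereas $d'_q$ is the densely defined operator from $\dom d'_q\subset E'_{q+1}$ into $E'_q$; these are not the same operator and their ranges live in different spaces. The precise relation is that $\dom d'_q=\{g\in E'_{q+1}\ :\ {}^t(d^q|_G)\,g\in {}^tj(E'_q)\}$ and ${}^tj\circ d'_q={}^t(d^q|_G)$ on this domain. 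Since ${}^tj$ is injective with dense range but need not be a topological isomorphism onto its image, closedness of $\im{}^t(d^q|_G)$ in $G'$ neither formally implies nor is formally implied by closedness of $\im d'_q$ in $E'_q$; relating the two is exactly where the cited references do real work, and your proposal acknowledges the difficulty without resolving it. If you restrict the second assertion to complexes with continuous differentials (which suffices for every application in this paper), your proof is complete; in the stated generality it is not.
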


 As a consequence of the previous theorem and of the solvability of the Cauchy-Riemann equation with prescribed support { in the closure of a bounded domain} with connected complement in a Stein manifold of dimension $n\geq 2$, in bidegree $(p,1)$, $0\leq p\leq n$, we obtain
\begin{thm}\label{sep}
	Let $X$ be a Stein manifold of complex dimension $n\geq 2$ and $D\subset\subset X$ a relatively compact subset of $X$ such that $X\setminus D$ is connected. Then
	
	(i) Either $H^{0,n-1}(D)=0$ or  $H^{0,n-1}(D)$ is not Hausdorff;
	
	{\parindent=0 pt if moreover $D$ has Lipschitz boundary,}
	
	(ii) Either $H^{0,n-1}_\infty(\ol D)=0$ or  $H^{0,n-1}_\infty(\ol D)$ is not Hausdorff;

	(iii) Either $\check H^{0,n-1}(\ol D)=0$ or  $\check H^{0,n-1}(\ol D)$ is not Hausdorff.
\end{thm}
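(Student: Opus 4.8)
The plan is to obtain each of (i)--(iii) directly from the Serre duality theorem (Theorem~\ref{serre}), the sole analytic ingredient being the solvability of $\opa$ with prescribed support in bidegree $(n,1)$. Throughout I fix $p=0$, so that the three cohomological complexes whose $(n-1)$-st cohomology appears in the statement are the $\opa$-complexes on $\ec^{0,\bullet}(D)$, on $\ci_{0,\bullet}(\ol D)$, and on $\check\dc'^{0,\bullet}(\ol D)$, with cohomology $H^{0,n-1}(D)$, $H^{0,n-1}_\infty(\ol D)$, and $\check H^{0,n-1}(\ol D)$ respectively. Applying Theorem~\ref{serre} with $q+1=n-1$, the assertion ``$=0$ or non-Hausdorff'' follows in every case the moment one checks that the degree-$(n-1)$ homology $H_{n-1}(E'_\bullet)$ of the corresponding dual complex vanishes, so the whole theorem reduces to three such vanishing statements.

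First I would identify these dual homology groups. Reading the dual complexes listed before the statement with $p=0$, the space $E'_q$ is $\ec'^{n,n-q}(D)$ in case (i), $\ec'^{n,n-q}_{\ol D}(X)$ in case (ii), and $\dc_{\ol D}^{n,n-q}(X)$ in case (iii); hence at homological degree $q=n-1$ the forms or currents involved have bidegree $(n,1)$, and
$$H_{n-1}(E'_\bullet)=H^{n,1}_c(D),\quad H^{n,1}_{\ol D,cur}(X),\quad H^{n,1}_{\ol D,\infty}(X)$$
in the three cases. Each is a $\opa$-cohomology group in bidegree $(n,1)$ subject to a support constraint---compact support in $D$, support in $\ol D$, or smooth with compact support in $\ol D$---and its vanishing is precisely the quoted solvability of the Cauchy--Riemann equation with prescribed support in bidegree $(n,1)$, valid because $X$ is Stein with $n\geq 2$ and $X\setminus D$ is connected. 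For (ii) and (iii) the Lipschitz hypothesis on $bD$ enters exactly here, since it is what guarantees that the listed dual complexes are the genuine strong duals, so that Theorem~\ref{serre} may be applied; granting the three vanishings, Theorem~\ref{serre} then delivers (i), (ii), and (iii) simultaneously.

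The step I expect to be the main obstacle is the support propagation underlying the solvability. For (i) one extends a $\opa$-closed compactly supported $(n,1)$-current $f$ by zero to $X$ and solves $\opa u=f$ there; Serre duality on the Stein manifold $X$ gives $H^{n,1}_c(X)\cong (H^{0,n-1}(X))^{*}$, which is $0$ for $n\geq 2$ by Cartan's Theorem~B, so the primitive $u$ may be taken with compact support in $X$. Being a holomorphic $(n,0)$-form on $X\setminus\supp f$ and compactly supported, $u$ must vanish on the unbounded region, hence on $X\setminus\ol D$, by the identity theorem; a further continuation across $bD$, where $u$ is still holomorphic since $\supp f\subset D$, then yields $\supp u\subseteq\supp f\subset D$. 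The current and smooth versions needed for (ii) and (iii) follow the same extend-solve-continue scheme in the appropriate regularity class. It is exactly at this continuation step that both $n\geq 2$ and the connectedness of $X\setminus D$ are indispensable: were either to fail, the complement could carry a nonzero holomorphic primitive, the support constraint would be lost, and the dual homology---together with the dichotomy---would collapse.
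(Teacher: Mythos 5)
Your proposal is correct and follows essentially the same route as the paper, which derives Theorem~\ref{sep} precisely from Theorem~\ref{serre} together with the solvability of $\opa$ with prescribed support (compact in $D$, resp.\ in $\ol D$) in bidegree $(n,1)$, deferring the details to Theorem~3.2 of \cite{LaShdualiteL2}. Your identification of the dual homology groups at degree $n-1$, the role of the Lipschitz hypothesis in legitimizing the duality pairings for (ii) and (iii), and the extend--solve--continue argument using $H^{n,1}_c(X)=0$ and the connectedness of $X\setminus D$ are exactly the ingredients the paper invokes.
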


For a more general  result and the proof of this theorem,  see Theorem 3.2 in \cite{LaShdualiteL2}.

\section{Characterization of pseudoconvexity by extendable Dolbeault cohomology}
\subsection{Necessary condition for the outside boundary}\label{s3}

Let $X$ be a Stein manifold of complex dimension $n\geq 2$ and $\Omega$ a relatively compact domain in $X$. Let us denote by $D$ the union of all the relatively compact connected components of $X\setminus\ol\Omega$ and set $\wt\Omega=\Omega\cup\ol D$.
Note that $\wt\Omega\setminus D$ is a closed subset of $\wt\Omega$ and we use $H^{0,q}_\infty(\wt\Omega\setminus D)$ to denote the Dolbeault cohomology groups for forms smooth up to the boundary of $D$.

In his paper \cite{Tr3}, Trapani proved that if $H^{0,q}(\Omega)=0$ for all $1\leq q\leq n-2$ and $H^{0,n-1}(\Omega)$ is Hausdorff, then $\wt\Omega$ has to be pseudoconvex. We now prove that this result extends to cohomology with growth or regularity conditions up to the boundary.

\begin{thm}\label{smooth}
Let $X$, $\Omega$ and $\wt\Omega$ be as above.  Then

(i) for each $q\in\nb$ with $1\leq q\leq n-2$, $H^{0,q}_\infty(\wt\Omega\setminus D)=0$ implies $H^{0,q}(\wt\Omega)=0$;

(ii)   $H^{0,n-1}_\infty(\wt\Omega\setminus D)$ is Hausdorff implies $H^{0,n-1}(\wt\Omega)=0$
\end{thm}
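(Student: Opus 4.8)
The plan is to relate the cohomology on $\wt\Omega$ to that on $\wt\Omega\setminus D$ through a Mayer--Vietoris-type or extension argument, exploiting the fact that $\wt\Omega = \Omega \cup \ol D$ with $\wt\Omega\setminus D$ being the complement of the hole. Since $X$ is Stein and $\wt\Omega$ is the domain obtained by filling in the relatively compact components of $X\setminus\ol\Omega$, the outer boundary $b\wt\Omega$ coincides with $b\Omega$, while the inner region $D$ has been filled in. The key mechanism should be that a $\dbar$-closed $(0,q)$-form on $\wt\Omega$, when restricted to the closed subset $\wt\Omega\setminus D$, becomes a $\dbar$-closed form smooth up to $bD$; the vanishing hypothesis $H^{0,q}_\infty(\wt\Omega\setminus D)=0$ then furnishes a solution on $\wt\Omega\setminus D$, and the task is to glue or extend this solution across $D$ to obtain a global solution on $\wt\Omega$.

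\textbf{Step 1.} First I would set up the comparison of cohomology groups. Given a $\dbar$-closed $f\in\ec^{0,q}(\wt\Omega)$ (or in the relevant regularity class), its restriction to a neighborhood of $\wt\Omega\setminus D$ yields an element of the complex computing $H^{0,q}_\infty(\wt\Omega\setminus D)$. For part (i), when $1\le q\le n-2$, the hypothesis gives $u$ with $\dbar u = f$ on $\wt\Omega\setminus D$, where $u$ is smooth up to $bD$. \textbf{Step 2.} The obstruction to globalizing is precisely the behavior of $u$ near and across $D$. I would use that $D$ is a relatively compact set with connected complement sitting inside the Stein manifold: the solution $u$ on $\wt\Omega\setminus D$ restricts to a $\dbar$-closed form on a neighborhood of $bD$, and the idea is to modify $u$ by subtracting a form supported near $\ol D$ so that the difference extends holomorphically across $D$ and solves $\dbar\,(\cdot)=f$ on all of $\wt\Omega$. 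Here the Serre-duality machinery from Theorem~\ref{serre} and the prescribed-support solvability underlying Theorem~\ref{sep} should be the lever, since solving $\dbar$ with support in $\ol D$ in bidegree controlling the extension is exactly what these results govern.

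\textbf{Step 3.} For part (ii), the degree is critical: $q=n-1$ is the top relevant degree, and the hypothesis is only Hausdorffness of $H^{0,n-1}_\infty(\wt\Omega\setminus D)$, not vanishing. I would invoke Theorem~\ref{sep}(ii), which asserts that $H^{0,n-1}_\infty(\ol D)$ (or the analogous group for the filled region) is \emph{either} zero \emph{or} non-Hausdorff. The strategy is to show that Hausdorffness of the cohomology on $\wt\Omega\setminus D$ forces the associated group in the dichotomy to land in the vanishing alternative, and then to combine this vanishing with the dual-complex description via Proposition~\ref{adh} (identifying $\ol{\im\,\dbar}$ as an annihilator of $\ke d'_q$) to conclude $H^{0,n-1}(\wt\Omega)=0$. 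The passage from the Hausdorff property on the subset to actual solvability on $\wt\Omega$ is the delicate point, and it is where the connectedness of the complement of $D$ in $\wt\Omega$ (ensuring the relevant support-solvability in bidegree $(0,1)$ for the dual problem) becomes essential.

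\textbf{The hard part} will be the gluing/extension step across $D$: showing that the locally-defined primitive $u$ on $\wt\Omega\setminus D$ can be corrected to a global primitive on $\wt\Omega$ without destroying the $\dbar$-equation. This requires controlling a $\dbar$-closed form on a neighborhood of $\ol D$ and solving $\dbar$ with the appropriate support constraint, which is precisely the content that Serre duality plus prescribed-support solvability delivers in the stated range of $q$; the degree shift in $q$ and the distinction between vanishing (for $q\le n-2$) and mere Hausdorffness (for $q=n-1$) is what separates the two cases and dictates the use of Theorem~\ref{sep}.
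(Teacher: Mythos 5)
Your opening move (restrict $f$ to $\wt\Omega\setminus D$ and use the hypothesis to produce a primitive $g$ smooth up to $bD$) matches the paper, but the globalization step in your Step 2 is misconceived, and this is a genuine gap. You propose to correct $g$ by a form \emph{supported near $\ol D$}, obtained from ``Serre duality plus prescribed-support solvability.'' Solving $\dbar v = f-\dbar\wt g$ with $\supp v\subset \ol D$ in bidegree $(0,q)$ is precisely the vanishing of a group of the type $H^{0,q}_{\ol D,\infty}(X)$ --- exactly the kind of statement Section 3.2 of the paper is trying to \emph{characterize}, not something freely available; the prescribed-support solvability quoted before Theorem~\ref{sep} holds only in bidegree $(p,1)$ and is not what is needed here. (Also, $u$ near $bD$ satisfies $\dbar u=f$, so it is not $\dbar$-closed there as you assert.) The actual mechanism is much simpler and requires no support constraint on the correction: since $g$ is smooth up to $bD$, extend it to a smooth form $\wt g$ on all of $\wt\Omega$; then $f-\dbar\wt g$ is $\dbar$-closed and vanishes on $\wt\Omega\setminus\ol D$, hence is supported in $\ol D\subset\subset X$ and extends by zero to a $\dbar$-closed form on the ambient Stein manifold $X$. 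Theorem B on $X$ gives a smooth $u$ with $\dbar u=f-\dbar\wt g$, and $f=\dbar(\wt g+u)$ on $\wt\Omega$. The whole point is that the error is killed on $X$, not inside $\ol D$.

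Your Step 3 also aims the Serre dichotomy at the wrong group. The paper applies Theorem~\ref{sep}(i) to $H^{0,n-1}(\wt\Omega)$ itself, which is legitimate because $X\setminus\wt\Omega$ is connected by the very construction of $\wt\Omega$ (all relatively compact components of the complement have been filled in); the connectedness of $\wt\Omega\setminus D$ that you emphasize is not what is used. The argument runs: take $f$ orthogonal to every $\dbar$-closed $(n,1)$-current with compact support in $\wt\Omega$; a fortiori $f$ is orthogonal to those supported in $\wt\Omega\setminus D$, so Hausdorffness of $H^{0,n-1}_\infty(\wt\Omega\setminus D)$ together with Proposition~\ref{adh} yields a primitive on $\wt\Omega\setminus D$, and the same extend-by-zero argument as in (i) upgrades it to a global primitive on $\wt\Omega$. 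By Proposition~\ref{adh} this shows $H^{0,n-1}(\wt\Omega)$ is Hausdorff, and Theorem~\ref{sep}(i) then forces $H^{0,n-1}(\wt\Omega)=0$. Nothing about $H^{0,n-1}_\infty(\ol D)$ enters.
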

\begin{proof}
Let us first prove (i).
Let $f\in \ci_{0,q}(\wt\Omega)$ be a $\opa$-closed form. Then its restriction  to $\wt\Omega\setminus D$ is  a $\opa$-closed smooth form on $\wt\Omega\setminus D$. Since $H^{0,q}_\infty(\wt\Omega\setminus D)=0$, there exists a form $g\in \ci_{0,q-1}(\wt\Omega\setminus D)$ such that $f=\opa g$ on $\Omega$. Let $\wt g$ be a smooth extension of $g$ to $\wt\Omega$. Then $f-\opa\wt g$ is smooth and $\opa$-closed on $\wt\Omega$ and $f-\opa\wt g$ vanishes on $\wt\Omega\setminus\ol D$. Therefore we can extend $f-\opa\wt g$ by $0$ to a $\opa$-closed form on $X$. Since $X$ is Stein, there exists a smooth form $u$ on $X$ such that $f=\opa(\wt g+u)$ on $\wt\Omega$.

Let us now consider the assertion (ii). We first prove that $H^{0,n-1}(\wt\Omega)$ is Hausdorff.
Let $f\in \ci_{0,n-1}(\wt\Omega)$ be such that, for any $\opa$-closed $(n,1)$-current $T$ with compact support in $\wt\Omega$, $\langle T,f\rangle =0$. Then for any $(n,1)$-current $S$ with compact support in $\wt\Omega\setminus D$, we have $$\langle S,f\rangle =0.$$
In particular, $f$ is orthogonal to any current with coefficients in the dual space of $\cc^\infty(\wt\Omega\setminus D)$.

Using the assumptions that    $H^{0,n-1}_\infty(\wt\Omega\setminus D)$ is Hausdorff, we get  from Proposition \ref{adh}  the existence of a form $g\in \ci_{0,n-2}(\wt\Omega\setminus D)$ such that $f=\opa g$ on $\Omega$. Then following the proof of assertion (i) we obtain that $H^{0,n-1}(\wt\Omega)$ is Hausdorff. Since $X\setminus\wt\Omega$ is connected, it follows from Theorem \ref{sep} (see also Theorem 3.2 in \cite{LaShdualiteL2} and
Theorem 2 in \cite{Tr1}) that $H^{0,n-1}(\wt\Omega)=0$.
\end{proof}
It is well known (see, for example, Corollary 4.2.6 and Theorem 4.2.9 in \cite{Hormander90}) that a domain $U$ in $\cb^n$ is pseudoconvex if and only if we have $H^{0,q}(U)=0$ for all $1\leq q\leq n-1$.
   So from Theorem \ref{smooth} we can deduce:

\begin{cor}\label{pcsmooth}
Let $n\geq 2$ and $D\subset\subset\wt\Omega$ be two relatively compact open subsets of $\cb^n$ such that both $\cb^n\setminus\wt\Omega$ and $\wt\Omega\setminus D$ are connected.
Assume $H^{0,q}_\infty(\wt\Omega\setminus D)=0$, if  $1\leq q\leq n-2$, and $H^{0,n-1}_\infty(\wt\Omega\setminus D)$ is Hausdorff, then $\wt\Omega$ is pseudoconvex.
\end{cor}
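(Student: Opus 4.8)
The plan is to read this corollary as an immediate specialization of Theorem~\ref{smooth}, combined with the cohomological characterization of pseudoconvexity in $\cb^n$ recalled just above the statement. First I would fix the Stein manifold $X=\cb^n$ and set $\Omega=\wt\Omega\setminus\ol D$, so that the data $(\Omega,D,\wt\Omega)$ fall into the standing framework of Subsection~\ref{s3}. For this I must check that $D$ is precisely the union of the relatively compact connected components of $X\setminus\ol\Omega$ and that $\wt\Omega=\Omega\cup\ol D$. The complement $\cb^n\setminus\ol\Omega$ splits into the unbounded, hence not relatively compact, piece coming from $\cb^n\setminus\ol{\wt\Omega}$, which is connected because $\cb^n\setminus\wt\Omega$ is connected by hypothesis, together with the components of $D$; since each component of $D$ is relatively compact, the bounded components of $\cb^n\setminus\ol\Omega$ are exactly the components of $D$, and the identity $\wt\Omega=\Omega\cup\ol D$ follows. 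The hypothesis that $\wt\Omega\setminus D$ is connected guarantees that the closed set carrying the cohomology $H^{0,q}_\infty(\wt\Omega\setminus D)$ is the right one, while the connectedness of $\cb^n\setminus\wt\Omega$ is exactly what is used to invoke Theorem~\ref{sep} inside the proof of Theorem~\ref{smooth}(ii).

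With the setup in place, the argument is a two-line deduction. For $1\le q\le n-2$ the vanishing $H^{0,q}_\infty(\wt\Omega\setminus D)=0$ gives, by Theorem~\ref{smooth}(i), that $H^{0,q}(\wt\Omega)=0$; and the Hausdorff hypothesis on $H^{0,n-1}_\infty(\wt\Omega\setminus D)$ gives, by Theorem~\ref{smooth}(ii), that $H^{0,n-1}(\wt\Omega)=0$. Putting these together I obtain $H^{0,q}(\wt\Omega)=0$ for every $q$ in the full range $1\le q\le n-1$. By the characterization recalled above (Corollary~4.2.6 and Theorem~4.2.9 in \cite{Hormander90}), this forces $\wt\Omega$ to be pseudoconvex, which is the assertion.

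Since there is no genuine analytic difficulty beyond Theorem~\ref{smooth}, the only points that require care are bookkeeping ones. The first is the geometric verification above that the triple $(\Omega,D,\wt\Omega)$ really matches the notation of Subsection~\ref{s3}, for which the two connectedness assumptions in the statement are used in an essential way. The second is the degenerate case $n=2$: there the index range $1\le q\le n-2$ is empty, so part (i) contributes nothing and the entire conclusion rests on part (ii), which yields $H^{0,1}(\wt\Omega)=0$; this single vanishing is precisely the pseudoconvexity criterion in $\cb^2$, so the argument closes without change.
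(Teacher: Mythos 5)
Your proposal is correct and follows exactly the paper's route: Corollary~\ref{pcsmooth} is deduced by applying Theorem~\ref{smooth}(i) for $1\le q\le n-2$ and Theorem~\ref{smooth}(ii) for $q=n-1$ to get $H^{0,q}(\wt\Omega)=0$ for all $1\le q\le n-1$, and then invoking the characterization of pseudoconvexity from H\"ormander. The extra bookkeeping you include (matching the triple to the setup of Subsection~\ref{s3} and noting the empty range when $n=2$) is sound and consistent with what the paper leaves implicit.
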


Note that, replacing smooth forms up to the boundary by extendable currents in the proof of Theorem \ref{smooth}, we get:
\begin{prop}\label{pcext}
Let $n\geq 2$ and $D\subset\subset\wt\Omega$ be two relatively compact open subsets of $\cb^n$ such that both $\cb^n\setminus\wt\Omega$ and $\wt\Omega\setminus D$ are connected.
Assume that $\check{H}^{0,q}(\wt\Omega\setminus D)=0$, if  $1\leq q\leq n-2$, and $\check{H}^{0,n-1}(\wt\Omega\setminus D)$ is Hausdorff. Then $\wt\Omega$ is pseudoconvex.
\end{prop}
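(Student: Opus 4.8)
The plan is to run the proof of Theorem~\ref{smooth} essentially verbatim, replacing throughout the complex of forms smooth up to the boundary by the complex $(\check\dc'^{0,\bullet}(\wt\Omega\setminus D),\opa)$ of extendable currents, whose cohomology is $\check{H}^{0,q}(\wt\Omega\setminus D)$. Since the classical Dolbeault groups $H^{0,q}(\wt\Omega)$ are computed equally well by smooth forms or by currents, I may represent any class by a $\opa$-closed smooth form $f\in\ci_{0,q}(\wt\Omega)$ and then manipulate it as a current. Here $\Omega=\wt\Omega\setminus\ol D$ denotes the interior.

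For the analog of part~(i), fix $1\le q\le n-2$ and a $\opa$-closed $f\in\ci_{0,q}(\wt\Omega)$. Its restriction to $\wt\Omega\setminus D$ is a $\opa$-closed extendable current, so the hypothesis $\check{H}^{0,q}(\wt\Omega\setminus D)=0$ yields an extendable current $g\in\check\dc'^{0,q-1}(\wt\Omega\setminus D)$ with $f=\opa g$ on $\Omega$. Because $g$ is \emph{extendable}, it admits an extension $\wt g$ to a current on $\wt\Omega$ (indeed on $X$); then $f-\opa\wt g$ is $\opa$-closed and, since $\wt g=g$ and $f=\opa g$ on $\Omega$, it vanishes on $\Omega$. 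Hence $f-\opa\wt g$ is supported in the compact set $\ol D$ and, vanishing in a neighborhood of $b\wt\Omega$, extends by zero to a $\opa$-closed current on the Stein manifold $X$. Solving $\opa u=f-\opa\wt g$ on $X$ gives $f=\opa(\wt g+u)$ on $\wt\Omega$, so $H^{0,q}(\wt\Omega)=0$.

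For part~(ii) I would first show $H^{0,n-1}(\wt\Omega)$ is Hausdorff. Take a $\opa$-closed $f\in\ci_{0,n-1}(\wt\Omega)$ lying in $\ol{\im\opa}$, i.e. (by Proposition~\ref{adh}) annihilating every $\opa$-closed $(n,1)$-current with compact support in $\wt\Omega$. The dual complex of $(\check\dc'^{0,\bullet}(\wt\Omega\setminus D),\opa)$ is, by the identifications of Section~2, the complex of smooth forms with compact support in $\wt\Omega\setminus D$; its relevant kernel consists of $\opa$-closed forms in $\dc^{n,1}_{\wt\Omega\setminus D}(X)$, which are themselves compactly supported $\opa$-closed $(n,1)$-currents on $\wt\Omega$. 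Thus $f$ annihilates this kernel, and since $\check{H}^{0,n-1}(\wt\Omega\setminus D)$ is assumed Hausdorff, Proposition~\ref{adh} produces an extendable $g\in\check\dc'^{0,n-2}(\wt\Omega\setminus D)$ with $f=\opa g$ on $\Omega$; the extension-plus-zero-extension argument of part~(i) then shows $f\in\im\opa$, so $H^{0,n-1}(\wt\Omega)$ is Hausdorff. As $\cb^n\setminus\wt\Omega$ is connected, Theorem~\ref{sep} forces $H^{0,n-1}(\wt\Omega)=0$. Combined with part~(i) this gives $H^{0,q}(\wt\Omega)=0$ for all $1\le q\le n-1$, whence $\wt\Omega$ is pseudoconvex by the H\"ormander criterion (Corollary~4.2.6 and Theorem~4.2.9 in \cite{Hormander90}).

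The main obstacle I expect is the duality bookkeeping, and specifically confirming that the extendable-current machinery of Section~2 applies. The delicate supporting facts are that an extendable current on $\wt\Omega\setminus D$ genuinely extends across $bD$, that the dual of the extendable-current complex is the smooth compactly supported forms (so that the orthogonality condition descends to $\wt\Omega\setminus D$ by mere inclusion of kernels), and that the zero-extension of $f-\opa\wt g$ across $b\wt\Omega$ remains $\opa$-closed as a compactly supported current on $X$. Each of these hinges on the reflexivity and duality statements of Section~2, which is where the boundary regularity enters; once these are in place, the argument is a line-by-line transcription of the proof of Theorem~\ref{smooth}.
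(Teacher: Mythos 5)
Your proposal is correct and coincides with the paper's own proof: the paper derives Proposition~\ref{pcext} precisely by ``replacing smooth forms up to the boundary by extendable currents in the proof of Theorem~\ref{smooth},'' which is the line-by-line transcription you carry out. The supporting duality facts you flag (extendability across $bD$, the dual of the extendable-current complex sitting inside the compactly supported smooth forms, and the $\opa$-closedness of the zero-extension) are exactly the Section~2 ingredients the paper relies on, and they are used in the correct direction.
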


Let us notice that the vanishing or the Hausdorff property of the Dolbeault cohomology groups of the annulus $\Omega=\wt\Omega\setminus D$ are in fact independent of the larger domain $\wt\Omega$ as soon as it satisfies some cohomological conditions.

\begin{prop}\label{annulus}
Let  $D\subset\subset \wt\Omega_1\subset\subset\wt\Omega_2$ be bounded domains in $\cb^n$ such that $H^{0,q}(\wt\Omega_2)=0$ for some $q$,  $1\leq q\leq n-1$. Then we have the following:

(i) $H^{0,q}_\infty(\wt\Omega_1\setminus D)=0$ implies  $H^{0,q}_\infty(\wt\Omega_2\setminus D)=0$,

(i') $\check{H}^{0,q}(\wt\Omega_1\setminus D)=0$ implies  $\check{H}^{0,q}(\wt\Omega_2\setminus D)=0$,

and

(ii) $H^{0,q}_\infty(\wt\Omega_1\setminus D)$ is Hausdorff implies  $H^{0,q}_\infty(\wt\Omega_2\setminus D)$ is Hausdorff,

(ii') $\check{H}^{0,q}(\wt\Omega_1\setminus D)$ is Hausdorff implies  $\check{H}^{0,q}(\wt\Omega_2\setminus D)$ is Hausdorff.
\end{prop}

\begin{proof}
Let us first prove (i).
Let $f\in \ci_{0,q}(\wt\Omega_2\setminus  D)$ such that $\opa f=0$. Using the assumption $H^{0,q}_\infty(\wt\Omega_1\setminus D)=0$, we get a form $u\in \ci_{0,q-1}(\wt\Omega_1\setminus D)$, which satisfies $\opa u=f$ on $\wt\Omega_1\setminus\ol D$. Let $\chi$ be a smooth function on $\cb^n$ with support in $\wt\Omega_1$ and identically equal to $1$ on a neighborhood of $\ol D$. Consider $\opa(\chi u)$, it belongs to  $\ci_{0,q}(\wt\Omega_2\setminus  D)$ and is such that $f-\opa(\chi u)$ vanishes near the boundary of $D$. After extending by zero in $D$, we get a $\opa$-closed form on $\wt\Omega_2$, still denoted by $f-\opa(\chi u)$. Since $H^{0,q}(\wt\Omega_2)=0$, there exists $v\in \ci_{0,q-1}(\wt\Omega_2)$ such that $\opa v=f-\opa(\chi u)$. After restriction to $\wt\Omega_2\setminus D$, we get $f=\opa(\chi u+v)$ on $\wt\Omega_2\setminus\ol D$.

For assertion (ii), note that if $f\in \ci_{0,q}(\wt\Omega_2\setminus \ol D)$ is orthogonal to $\opa$-closed currents with compact support in  $\wt\Omega_2\setminus  D$, it is orthogonal to $\opa$-closed currents with compact support in   $\wt\Omega_1\setminus D$, then the proof follows the same argument used to prove (i).  The proofs of (i') and (ii')
are similar by substituting smooth forms with currents.
\end{proof}

\begin{cor}\label{invariance}
Let $D\subset\subset \wt\Omega_1\subset\subset\wt\Omega_2$ be bounded domains in $\cb^n$ such that $\wt\Omega_2$ is pseudoconvex.

(i) Assume that $H^{0,q}_\infty(\wt\Omega_1\setminus D)=0$, if  $1\leq q\leq n-2$, and $H^{0,n-1}_\infty(\wt\Omega_1\setminus D)$ is Hausdorff. Then $H^{0,q}_\infty(\wt\Omega_2\setminus D)=0$, if  $1\leq q\leq n-2$, and $H^{0,n-1}_\infty(\wt\Omega_2\setminus D)$ is Hausdorff.

(ii)  Assume that $\check{H}^{0,q}(\wt\Omega_1\setminus D)=0$, if  $1\leq q\leq n-2$, and $\check{H}^{0,n-1}(\wt\Omega_1\setminus D)$ is Hausdorff. Then $\check{H}^{0,q}(\wt\Omega_2\setminus D)=0$, if  $1\leq q\leq n-2$, and $\check{H}^{0,n-1}(\wt\Omega_2\setminus D)$ is Hausdorff.
\end{cor}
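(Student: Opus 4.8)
Corollary \ref{invariance} combines Proposition \ref{annulus} with Corollary \ref{pcsmooth} and Proposition \ref{pcext}. Let me sketch the proof.

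The key observation is that the hypotheses and conclusions have the exact form needed to chain the earlier results together.The plan is to deduce Corollary \ref{invariance} directly by combining Proposition \ref{annulus} with the pseudoconvexity of $\wt\Omega_2$, treating the vanishing range $1\le q\le n-2$ and the top-degree Hausdorff case $q=n-1$ uniformly. Since $\wt\Omega_2$ is pseudoconvex in $\cb^n$, the criterion quoted right before Corollary \ref{pcsmooth} (Corollary 4.2.6 and Theorem 4.2.9 in \cite{Hormander90}) gives $H^{0,q}(\wt\Omega_2)=0$ for every $q$ with $1\le q\le n-1$. This is precisely the standing hypothesis of Proposition \ref{annulus}, so for each such $q$ that proposition applies with $\wt\Omega_1$ and $\wt\Omega_2$ in the roles indicated.

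For part (i), I would argue as follows. First take $q$ in the range $1\le q\le n-2$: by assumption $H^{0,q}_\infty(\wt\Omega_1\setminus D)=0$, and since $H^{0,q}(\wt\Omega_2)=0$, Proposition \ref{annulus}(i) yields $H^{0,q}_\infty(\wt\Omega_2\setminus D)=0$. This handles the entire vanishing range. For the top degree $q=n-1$, the hypothesis is only that $H^{0,n-1}_\infty(\wt\Omega_1\setminus D)$ is Hausdorff; here I invoke Proposition \ref{annulus}(ii) instead, again using $H^{0,n-1}(\wt\Omega_2)=0$, to conclude that $H^{0,n-1}_\infty(\wt\Omega_2\setminus D)$ is Hausdorff. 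Together these give exactly the conclusion of (i).

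Part (ii) is proved in exactly the same way, simply replacing the smooth-form cohomology $H^{0,q}_\infty$ by the extendable-current cohomology $\check H^{0,q}$ and using parts (i') and (ii') of Proposition \ref{annulus} in place of (i) and (ii). The vanishing range $1\le q\le n-2$ uses (i'), and the Hausdorff conclusion in degree $n-1$ uses (ii'), in both cases relying on $H^{0,q}(\wt\Omega_2)=0$ supplied by the pseudoconvexity of $\wt\Omega_2$.

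There is no real obstacle here, since the statement is essentially a bookkeeping corollary: the only substantive input is the passage from pseudoconvexity of $\wt\Omega_2$ to the vanishing $H^{0,q}(\wt\Omega_2)=0$ across the full range $1\le q\le n-1$, which is a standard $L^2$ or sheaf-theoretic fact, and after that one simply feeds these vanishing statements as the hypotheses of Proposition \ref{annulus}. The one point to state carefully is that Proposition \ref{annulus} requires $\wt\Omega_1$ and $\wt\Omega_2$ to be bounded domains with $D\subset\subset\wt\Omega_1\subset\subset\wt\Omega_2$, which is exactly what is assumed in Corollary \ref{invariance}, so the hypotheses match verbatim and no additional regularity or connectedness assumption needs to be verified beyond those already present.
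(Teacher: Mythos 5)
Your proof is correct and is exactly the argument the paper intends: the corollary is stated without proof immediately after Proposition \ref{annulus} precisely because pseudoconvexity of $\wt\Omega_2$ gives $H^{0,q}(\wt\Omega_2)=0$ for $1\leq q\leq n-1$, after which parts (i), (ii), (i'), (ii') of that proposition yield the four conclusions. No gaps.
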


Note that by Corollary \ref{pcsmooth} and Proposition \ref{pcext},
 if $\Omega_1=\wt\Omega_1\setminus D$ is connected,
 each of the hypothesis in (i) or (ii) of Corollary \ref{invariance} forces $\wt\Omega_1$ to be pseudoconvex. Note also that if $\Omega_2=\wt\Omega_2\setminus D$ is connected,
  the condition $\wt\Omega_2$ pseudoconvex is necessary for the conclusion in (i) or (ii) of Corollary \ref{invariance} to hold.

\subsection{Necessary condition for the inside boundary}\label{s3bis}

Let $X$ be a connected, complex manifold and $D$ a relatively compact  open subset of  $X$ with Lipschitz boundary.  If $D$ is not a domain (i.e., a connected open set),  then the Lipschitz boundary assumption implies that  $D$ is a finite union of domains.  The  Dolbeault cohomology  groups  for $D$ are the direct sum of the corresponding cohomology groups on each connected components.  In this section we will  prove that  in any dimension $n\geq 2$ and for  some   Dolbeault cohomology groups on $X\setminus D$, such as forms  smooth up to the boundary, vanishing and Hausdorff properties of these groups  implies pseudoconvexity for the domain $D$, provided its boundary is sufficiently smooth.
A related result for $n=2$ was proved in \cite{Tr2}.

We will first relate the Hausdorff property or the vanishing of the cohomology groups with prescribed support in $\ol D$ with the same property for the cohomology groups of $X\setminus D$.
\begin{prop}\label{sep1}
Assume $H^{0,q-1}(X)=0$, for some $2\leq q\leq n$. We have

(i) if $H^{0,q}_{\ol D,\infty}(X)$ is Hausdorff, then $H^{0,q-1}_{\infty}(X\setminus D)$ is Hausdorff,

(ii) if $H^{0,q}_{\ol D,cur}(X)$ is Hausdorff, then $\check{H}^{0,q-1}(X\setminus D)$ is Hausdorff.
\end{prop}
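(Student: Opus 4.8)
The plan is to realize both statements as instances of a single homological fact about short exact sequences of $\opa$-complexes, obtained by comparing forms (or currents) on $X$ with their restrictions to $X\setminus D$, the kernel of that restriction being the objects supported in $\ol D$.

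For (i), I consider the restriction morphism $r\colon \ec^{0,\bullet}(X)\to \ci_{0,\bullet}(X\setminus D)$. By definition $\ci_{0,q}(X\setminus D)$ is the space of restrictions to $X\setminus D$ of $\ci$-forms on $X$, so $r$ is surjective and commutes with $\opa$. I would first check that $\ke r=\dc^{0,\bullet}_{\ol D}(X)$: a smooth form whose restriction to $X\setminus D$ vanishes is flat on $X\setminus D$, hence supported in $\ol D$; conversely, since $bD$ is Lipschitz every boundary point is a limit of points of $X\setminus\ol D$, so a smooth form supported in $\ol D$ vanishes to infinite order along $bD$ and thus lies in $\ke r$. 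This produces the short exact sequence of $\opa$-complexes
$$0\to \dc^{0,\bullet}_{\ol D}(X)\xrightarrow{\ i\ } \ec^{0,\bullet}(X)\xrightarrow{\ r\ } \ci_{0,\bullet}(X\setminus D)\to 0,$$
whose associated cohomology groups are $H^{0,\bullet}_{\ol D,\infty}(X)$, $H^{0,\bullet}(X)$ and $H^{0,\bullet}_\infty(X\setminus D)$.

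The associated long exact sequence contains
$$H^{0,q-1}(X)\to H^{0,q-1}_\infty(X\setminus D)\xrightarrow{\ \delta\ } H^{0,q}_{\ol D,\infty}(X)\to H^{0,q}(X),$$
so the hypothesis $H^{0,q-1}(X)=0$ makes $\delta$ injective and identifies $H^{0,q-1}_\infty(X\setminus D)$ with $\ke\big(H^{0,q}_{\ol D,\infty}(X)\to H^{0,q}(X)\big)$. This identification is the conceptual heart, but the actual content is the transfer of the \emph{Hausdorff} property, which I expect to be the main obstacle, since it is a closed-range rather than a purely algebraic assertion. I would settle it at the level of the complexes. Writing $A^\bullet,B^\bullet,C^\bullet$ for the three complexes above and $d=\opa$, surjectivity of $r$ gives $\im d_C^{q-2}=r\big(\im d_B^{q-2}\big)$, and since $r$ is an open map (open mapping theorem, all spaces being Fr\'echet--Schwartz) this image is closed in $C^{q-1}$ if and only if $\im d_B^{q-2}+i(A^{q-1})=\im d_B^{q-2}+\ke r$ is closed in $B^{q-1}$. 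Now $H^{0,q-1}(X)=0$ gives $\ke d_B^{q-1}=\im d_B^{q-2}$, so this subspace contains $\ke d_B^{q-1}$ and therefore equals $(d_B^{q-1})^{-1}\big(i(\im d_A^{q-1})\big)$. Finally, $H^{0,q}_{\ol D,\infty}(X)$ Hausdorff means $\im d_A^{q-1}$ is closed in $A^q$; as $i$ is a closed topological embedding, $i(\im d_A^{q-1})$ is closed in $B^q$, and its preimage under the continuous map $d_B^{q-1}$ is closed in $B^{q-1}$. Hence $\im d_C^{q-2}$ is closed and $H^{0,q-1}_\infty(X\setminus D)$ is Hausdorff.

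Statement (ii) is entirely parallel: replacing smooth forms by currents, restriction gives the short exact sequence $0\to \ec'^{0,\bullet}_{\ol D}(X)\to \dc'^{0,\bullet}(X)\to \check\dc'^{0,\bullet}(X\setminus D)\to 0$, where the identification of the kernel with currents supported in $\ol D$ is immediate, since vanishing of a current on the open set $X\setminus\ol D$ means exactly that its support lies in $\ol D$. The hypothesis $H^{0,q-1}(X)=0$ serves here as well, the cohomology of the current complex being the same $H^{0,q-1}(X)$ by the Dolbeault isomorphism recalled in the introduction. The same closed-range computation then applies verbatim, the underlying spaces now being strong duals of Fr\'echet--Schwartz spaces; alternatively one may deduce (ii) from (i) by Serre duality (Theorem \ref{serre}), which converts each Hausdorff statement into the Hausdorff statement for the dual complex. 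The one place where genuine care is needed is thus the functional-analytic input—openness of the restriction map and closedness of the embedding $i$—so I would make explicit that every complex here consists of Fr\'echet--Schwartz spaces or their strong duals, the setting in which these facts (and Theorem \ref{serre}) are available.
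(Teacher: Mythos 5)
Your argument is correct, but it takes a genuinely different route from the paper's. The paper proves the proposition by a direct duality/element-chasing argument: it takes $f$ in the closure of the image (characterized via Proposition \ref{adh} as orthogonality to $\opa$-closed currents supported in $X\setminus D$), extends it to $\wt f$ on $X$, shows that $\opa\wt f$ is orthogonal to $\opa$-closed extendable currents on $D$ via the computation $\langle S,\opa\wt f\rangle=\langle\opa\wt S,\wt f\rangle=\langle\opa\wt S,f\rangle=0$, invokes the Hausdorff hypothesis to write $\opa\wt f=\opa g$ with $g$ supported in $\ol D$, and finally uses $H^{0,q-1}(X)=0$ to solve for $\wt f-g$ globally. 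You instead package everything into the short exact sequence $0\to \dc^{0,\bullet}_{\ol D}(X)\to\ec^{0,\bullet}(X)\to\ci_{0,\bullet}(X\setminus D)\to 0$ and transfer closedness of ranges through the quotient map and the closed embedding; this avoids Proposition \ref{adh} and the duality pairing altogether, at the cost of the functional-analytic bookkeeping (openness of $r$, closedness of $i$), and it has the advantage of making the long exact sequence, and hence the precise relation $H^{0,q-1}_\infty(X\setminus D)\cong\ke\bigl(H^{0,q}_{\ol D,\infty}(X)\to H^{0,q}(X)\bigr)$, explicit. Both proofs use the Lipschitz hypothesis, yours to identify $\ke r$ with forms supported in $\ol D$, the paper's to identify the dual complexes. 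One caveat: your parenthetical claim that (ii) can alternatively be deduced from (i) by Serre duality is not right --- Serre duality pairs $H^{0,q}_{\ol D,\infty}(X)$ with $\check H^{n,n-q}(\ol D)$ and sends these groups to complementary bidegrees, whereas (ii) is the same-bidegree statement for the current-valued complexes (it is a parallel statement, not the dual one). Since your primary argument for (ii) is the verbatim repetition of the exact-sequence argument with currents, which does work, this aside should simply be deleted.
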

\begin{proof}
Let $f\in\ci_{0,q-1}(X\setminus D)$ be a $\opa$-closed form such that for any $\opa$-closed $(n,n-q+1)$-current $T$ on $X$ with compact support in $X\setminus D$, we have $\langle T,\ f\rangle =0$. Let $\wt f$ be a smooth extension of $f$ to $X$, then $\opa\wt f$ is a $\opa$-closed smooth $(0,q)$-form on $X$ with support in $\ol D$. Let us prove that for any $\opa$-closed $(n,n-q)$-current $S$ on $D$ extendable to a current in $X$, we have $\langle S,\opa\wt f\rangle =0$. Let $\wt S$ be an extension of $S$ to $X$ with compact support, then, we get
$$\langle S, \opa\wt f\rangle =\langle\wt S,\opa\wt f\rangle =\langle\opa\wt S, \wt f\rangle $$
and since $T=\opa\wt S$ is a $\opa$-closed $(n,n-q+1)$-current on $X$ with compact support in $X\setminus D$, the orthogonality property of $f$ implies
$$\langle\opa\wt S, \wt f\rangle =\langle\opa\wt S, f\rangle =0.$$
By hypothesis $H^{0,q}_{\ol D,\infty}(X)$ is Hausdorff, therefore there is a smooth $(0,q-1)$-form $g$ in $X$ with support in $\ol D$ such that $\opa\wt f=\opa g$ on $X$. Hence $\wt f-g$ is a $\opa$-closed  smooth $(0,q-1)$-form on $X$, whose restriction to $X\setminus D$ is equal to $f$. Therefore, $H^{0,q-1}(X)=0$ implies that there exists a smooth $(0,q-2)$-form $h$ on $X$ such that $\opa h=\wt f-g$ and by restriction to $X\setminus D$ we get $\opa h=f$ on $X\setminus\ol D$. This proves that $H^{0,q-1}_{\infty}(X\setminus D)$ is Hausdorff. Assertion (ii) can be proved in the same way.
\end{proof}

\begin{prop}\label{sep2}
Assume $H^{0,q}(X)=0$  and either $H^{n,n-q+1}(X)=0$ or $H^{n,n-q+1}_c(X)=0$, for some $2\leq q\leq n$.
We have

(i) if $H^{0,q-1}_{\infty}(X\setminus D)$ is Hausdorff, then $H^{0,q}_{\ol D,\infty}(X)$ is Hausdorff,

(ii) if $\check{H}^{0,q-1}(X\setminus\ol D)$ is Hausdorff, then $H^{0,q}_{\ol D,cur}(X)$ is Hausdorff.
\end{prop}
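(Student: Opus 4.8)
The plan is to prove Proposition~\ref{sep2} as a converse to Proposition~\ref{sep1}, reversing the roles played there by $X\setminus D$ and $\ol D$. The engine is again Proposition~\ref{adh}: to prove that a cohomology group is Hausdorff it suffices to show that every $\opa$-closed representative orthogonal to the kernel of the transposed operator is in fact $\opa$-exact. I will carry out (i) in detail; (ii) is strictly parallel, with smooth forms and currents interchanged.

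For (i), recall that $H^{0,q}_{\ol D,\infty}(X)$ is the $q$-th cohomology of the complex $(\dc^{0,\bullet}_{\ol D}(X),\opa)$ of smooth $(0,\bullet)$-forms supported in $\ol D$, whose dual complex is the extendable-current complex $(\check\dc'^{n,\bullet}(\ol D),\opa)$. By Proposition~\ref{adh} and reflexivity, Hausdorffness of $H^{0,q}_{\ol D,\infty}(X)$ is equivalent to the statement that every $\opa$-closed $\phi\in\dc^{0,q}_{\ol D}(X)$ orthogonal to all $\opa$-closed extendable $(n,n-q)$-currents on $D$ satisfies $\phi=\opa\psi$ for some $\psi\in\dc^{0,q-1}_{\ol D}(X)$. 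I fix such a $\phi$. Since $\phi$ is a $\opa$-closed smooth $(0,q)$-form on $X$ and $H^{0,q}(X)=0$, I first solve $\opa u=\phi$ with $u\in\ci_{0,q-1}(X)$; because $\phi$ vanishes on $X\setminus D$, the restriction $u_0:=u|_{X\setminus D}$ is a $\opa$-closed form in $\ci_{0,q-1}(X\setminus D)$.

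The heart of the argument is to show that $u_0$ is orthogonal to every $\opa$-closed $(n,n-q+1)$-current $T$ with compact support in $X\setminus D$. Granting this, Proposition~\ref{adh} together with the assumed Hausdorffness of $H^{0,q-1}_\infty(X\setminus D)$ yields $u_0=\opa v$ for some $v\in\ci_{0,q-2}(X\setminus D)$; extending $v$ to $\wt v\in\ci_{0,q-2}(X)$, the form $\psi:=u-\opa\wt v$ satisfies $\opa\psi=\phi$ and vanishes on $X\setminus D$, hence lies in $\dc^{0,q-1}_{\ol D}(X)$, as required. To establish the orthogonality, I invoke the vanishing of $H^{n,n-q+1}_c(X)$ to write $T=\opa S$ with $S$ a compactly supported $(n,n-q)$-current; integrating by parts (legitimate since $S$ has compact support) gives $\langle T,u\rangle=\pm\langle S,\opa u\rangle=\pm\langle S,\phi\rangle$. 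Finally, since $\supp T\subseteq X\setminus D$, the restriction $S|_D$ is a $\opa$-closed extendable $(n,n-q)$-current, and because $\phi$ is supported in $\ol D$ one has $\langle S,\phi\rangle=\langle S|_D,\phi\rangle=0$ by the orthogonality hypothesis on $\phi$.

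The main obstacle is precisely this transfer of orthogonality: it forces one to solve a $\opa$-equation for the dual class $T$ while retaining control on the support of the primitive, which is exactly what the hypothesis on $H^{n,n-q+1}$ supplies (the compactly supported version $H^{n,n-q+1}_c(X)=0$ is what renders the integration by parts against the non-compactly-supported $u$ legitimate; the ordinary version is used the same way once the supports are arranged). A secondary point demanding care is the identity $\langle S,\phi\rangle=\langle S|_D,\phi\rangle$ and the passage between forms vanishing on $X\setminus D$ and forms supported in $\ol D$ across the Lipschitz boundary $bD$; these rest on the precise duality conventions of Section~2 and of \cite{LaShdualiteL2}. For (ii) one runs the same scheme with $U$ a $(0,q-1)$-current solving $\opa U=\Phi$ for a $\opa$-closed $\Phi\in\ec'^{0,q}_{\ol D}(X)$, restricting to the extendable class $U|_{X\setminus\ol D}$, testing against $\opa$-closed smooth $(n,n-q+1)$-forms with compact support in $X\setminus\ol D$, and invoking the assumed Hausdorffness of $\check H^{0,q-1}(X\setminus\ol D)$ to conclude.
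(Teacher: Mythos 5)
Your proposal is correct and follows essentially the same route as the paper's own proof: solve $\opa u=\phi$ globally via $H^{0,q}(X)=0$, transfer the orthogonality hypothesis to the restriction $u|_{X\setminus D}$ by writing each test current $T$ as $\opa S$ using the vanishing of $H^{n,n-q+1}(X)$ or $H^{n,n-q+1}_c(X)$ and integrating by parts, then invoke the Hausdorffness of $H^{0,q-1}_\infty(X\setminus D)$ and correct by an extension of the resulting primitive. The only differences are notational, and you flag the same duality/support subtleties that the paper also treats implicitly.
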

\begin{proof}
Let $f$ be a $\opa$-closed $(0,q)$-form on $X$ with support contained in $\ol D$ such that for any  $\opa$-closed $(n,n-q)$-current $T$ on $D$ extendable as a current to $X$, we have $\langle T,f\rangle =0$. Since $H^{0,q}(X)=0$, there exists a smooth $(0,q-1)$-form $g$ on $X$ such that $\opa g=f$ on $X$. In particular, $\opa g=0$ on $X\setminus\ol D$.

Let $S$ be a $\opa$-closed $(n,n-q+1)$-current on $X$ with compact support in $X\setminus D$. Since $H^{n,n-q+1}(X)=0$ or $H^{n,n-q+1}_c(X)=0$, there exists a $(n,n-q)$-current $U$ on $X$ such that $\opa U=S$. (Here and hereafter we use $H^{n,n-q}_c(X)$ to denote the Dolbeault cohomology groups with compact support in $X$.) Hence $\opa U=0$ on $D$. Thus
$$\langle S,g\rangle =\langle  \opa U,g\rangle =\langle  U,\opa g\rangle =\langle U,f\rangle =0,$$
by hypothesis on $f$.
Therefore, the Hausdorff property of $H^{0,q-1}_{\infty}(X\setminus D)$ implies that there exists a smooth $(0,q-2)$-form $h$ on $X\setminus D$ such that $\opa h=g$. Let $\wt h$ be a smooth extension of $h$ to $X$. Then $u=g-\opa\wt h$ is a smooth form with support in $\ol D$ and
$$\opa u=\opa (g-\opa\wt h)=\opa g=f.$$
Assertion (ii) is proved in the same way.
\end{proof}

\begin{cor}\label{separe}
Let $X$ be a complex manifold of complex dimension $n\geq 2$ and $D$ be a relatively compact   open subset of  $X$ with Lipschitz boundary.
Assume $H^{0,q}(X)=0$ and $H^{0,q+1}(X)=0$, for some $1\leq q\leq n-1$, then
$$H^{0,q+1}_{\ol D,\infty}(X)~{\text{\sl is~ Hausdorff}}\quad\Leftrightarrow\quad H^{0,q}_\infty(X\setminus D)~{\text{\sl is~ Hausdorff}},$$
$$H^{0,q+1}_{\ol D,cur}(X)~{\text{\sl is~ Hausdorff}}\quad\Leftrightarrow\quad \check{H}^{0,q}(X\setminus\ol D)~{\text{\sl is~ Hausdorff}}.$$
\end{cor}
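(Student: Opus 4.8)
The plan is to obtain each of the two equivalences by reading Propositions \ref{sep1} and \ref{sep2} with the running index shifted by one. Concretely, I would apply both propositions with their index $q$ replaced by $q+1$; since the corollary ranges over $1\le q\le n-1$, the shifted index runs over $2\le q+1\le n$, which is exactly the range in which those two propositions are stated. Under this substitution the group $H^{0,q'}_{\ol D,\infty}(X)$ of Proposition \ref{sep1} becomes $H^{0,q+1}_{\ol D,\infty}(X)$ and $H^{0,q'-1}_{\infty}(X\setminus D)$ becomes $H^{0,q}_\infty(X\setminus D)$, and similarly for the current-valued groups. Thus the two pairs of groups appearing in each line of the corollary are matched, and the equivalence will be the conjunction of the two one-directional implications furnished by the two propositions. (Here I would treat $X\setminus D$ and $X\setminus\ol D$ as interchangeable for the extendable-current cohomology, as is consistent with the Lipschitz hypothesis on $D$.)

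For the forward implications ($\Rightarrow$ in both lines) I would invoke Proposition \ref{sep1}: after the shift its only hypothesis is $H^{0,q}(X)=0$, which is one of our standing assumptions. Part (i) then gives that Hausdorffness of $H^{0,q+1}_{\ol D,\infty}(X)$ forces Hausdorffness of $H^{0,q}_\infty(X\setminus D)$, and part (ii) gives the corresponding statement relating $H^{0,q+1}_{\ol D,cur}(X)$ and $\check H^{0,q}(X\setminus\ol D)$. For the reverse implications ($\Leftarrow$) I would invoke Proposition \ref{sep2}, again with index $q+1$. After the shift its hypotheses read $H^{0,q+1}(X)=0$ --- the second of our assumptions --- together with the vanishing of \emph{either} $H^{n,n-q}(X)$ \emph{or} $H^{n,n-q}_c(X)$; parts (i) and (ii) then supply, respectively, the smooth and the current versions of the missing implication.

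The one point that requires genuine attention --- and what I regard as the main (indeed only) obstacle --- is that this last top-degree vanishing condition is not literally among the hypotheses listed in the corollary, so I would need to secure it from the setting at hand. The key observation is that for $1\le q\le n-1$ one has $n-q\ge 1$, so the cohomological degree $n-q$ is positive; consequently $H^{n,n-q}(X)=0$ whenever $X$ is Stein, which is the case in all of the paper's applications (and is the natural ambient assumption here). Granting this auxiliary vanishing, Proposition \ref{sep2} applies and closes both reverse implications, so that the whole proof reduces to matching indices and checking hypotheses. I would therefore present the argument in two short steps --- ``$\Rightarrow$ is Proposition \ref{sep1}'' and ``$\Leftarrow$ is Proposition \ref{sep2}, using $H^{n,n-q}(X)=0$ for $n-q\ge 1$'' --- flagging explicitly the role of the top-degree vanishing as the hypothesis one must verify.
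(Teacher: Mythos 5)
Your overall strategy is exactly the paper's: the two equivalences are assembled from Proposition \ref{sep1} (forward direction, needing only $H^{0,q}(X)=0$ after the index shift) and Proposition \ref{sep2} (reverse direction). You have also correctly isolated the one genuine issue, namely that Proposition \ref{sep2} additionally requires $H^{n,n-q}(X)=0$ or $H^{n,n-q}_c(X)=0$, which is not among the corollary's hypotheses.

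However, your way of discharging that extra hypothesis is a genuine gap: you propose to assume $X$ is Stein, but the corollary is stated for an arbitrary complex manifold $X$ satisfying only $H^{0,q}(X)=0$ and $H^{0,q+1}(X)=0$. Importing Steinness proves a strictly weaker statement (even if it covers the paper's later applications). The paper instead derives the missing vanishing \emph{from the stated hypotheses} via Serre duality: by Theorem \ref{serre}, applied to the Fr\'echet--Schwartz complex $(\ec^{0,\bullet}(X),\opa)$ and its dual $(\ec'^{n,n-\bullet}(X),\opa)$, the assumption $H^{0,q+1}(X)=0$ (hence Hausdorff) forces $H^{n,n-q}_c(X)$ to be Hausdorff, and then the first part of the duality theorem together with $H^{0,q}(X)=0$ upgrades this to $H^{n,n-q}_c(X)=0$. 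With that in hand, Proposition \ref{sep2} applies verbatim (it only needs one of the two top-degree vanishings, and $H^{n,n-q}_c(X)=0$ suffices). So the fix is short, but it is the one nontrivial step of the proof, and replacing it by a Stein hypothesis changes the statement being proved.
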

\begin{proof}
By Theorem \ref{serre}, the assumption $H^{0,q+1}(X)=0$ implies that $H^{n,n-q}_c(X)$ is Hausdorff.  Since $H^{0,q}(X)=0$, we have $H^{n,n-q}_c(X)=0$.
The corollary then follows by applying Propositions \ref{sep1} and \ref{sep2}.
\end{proof}

By applying the Serre duality for the complexes $({\ec'}^{n,\bullet}_{\ol D}(X),\opa)$ and $(\dc_{\ol D}^{n,\bullet}(X),\opa)$, we obtain:

\begin{cor}\label{sepdual}
Assume that $X$ is a Stein manifold  and $D$ a relatively compact  open subset of  $X$ with Lipschitz boundary. Then for any $1\leq q\leq n-1$,

(i) $H^{0,q}_{\infty}(X\setminus D)$ is Hausdorff if and only if $\check{H}^{n,n-q}(\ol D)$ is Hausdorff,

(ii) $\check{H}^{0,q}(X\setminus D)$ is Hausdorff if and only if $H^{n,n-q}_\infty(\ol D)$ is Hausdorff.
\end{cor}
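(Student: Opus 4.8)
The plan is to bridge Corollary~\ref{separe}, which compares the cohomology of $X\setminus D$ with the cohomology with prescribed support in $\ol D$, to the \emph{intrinsic} cohomology of $\ol D$ by means of the Serre duality of Theorem~\ref{serre}. Since $X$ is Stein, $H^{0,k}(X)=0$ for every $k\geq 1$; in particular the hypotheses $H^{0,q}(X)=0$ and $H^{0,q+1}(X)=0$ of Corollary~\ref{separe} hold for all $1\leq q\leq n-1$. Thus that corollary already delivers, for each such $q$, the equivalences
\[
H^{0,q}_\infty(X\setminus D)\ \text{Hausdorff}\iff H^{0,q+1}_{\ol D,\infty}(X)\ \text{Hausdorff},
\]
\[
\check H^{0,q}(X\setminus\ol D)\ \text{Hausdorff}\iff H^{0,q+1}_{\ol D,cur}(X)\ \text{Hausdorff},
\]
where $X\setminus D=\ol{X\setminus\ol D}$ under the extendable-current convention. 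It then remains only to identify the Hausdorffness of the support cohomologies $H^{0,q+1}_{\ol D,\infty}(X)$ and $H^{0,q+1}_{\ol D,cur}(X)$ with that of $\check H^{n,n-q}(\ol D)$ and $H^{n,n-q}_\infty(\ol D)$ respectively, and this is precisely what Serre duality supplies.

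For part (i) I would apply the second assertion of Theorem~\ref{serre} to the dual pair consisting of the complex of extendable currents $(\check\dc'^{n,\bullet}(\ol D),\opa)$ and its strong dual, the complex of smooth forms supported in $\ol D$, namely $(\dc_{\ol D}^{0,\bullet}(X),\opa)$. Taking $E^\bullet=\check\dc'^{n,\bullet}(\ol D)$, a complex of duals of Fr\'echet--Schwartz spaces, the Hausdorff-versus-Hausdorff equivalence of Theorem~\ref{serre} applies. Tracking bidegrees, the dual term $E'_q=\dc_{\ol D}^{0,n-q}(X)$ realizes $H^{0,q+1}_{\ol D,\infty}(X)$ in homological position $H_{n-q-1}$, so that the degree-$(n-q)$ cohomology $\check H^{n,n-q}(\ol D)=H^{n-q}(E^\bullet)$ is paired with $H_{n-q-1}(E'_\bullet)=H^{0,q+1}_{\ol D,\infty}(X)$. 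Hence $\check H^{n,n-q}(\ol D)$ is Hausdorff iff $H^{0,q+1}_{\ol D,\infty}(X)$ is, and combining with the first equivalence above yields (i). For part (ii) I would run the identical argument with the dual pair $(\ci_{n,\bullet}(\ol D),\opa)$, now a complex of Fr\'echet--Schwartz spaces, and $(\ec'^{0,\bullet}_{\ol D}(X),\opa)$; the same bidegree count gives that $H^{n,n-q}_\infty(\ol D)$ is Hausdorff iff $H^{0,q+1}_{\ol D,cur}(X)$ is, which combined with the second equivalence gives (ii).

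The substantive care—and the step most prone to slips—is the index bookkeeping in Serre duality: one must check that with $E^q=\check\dc'^{n,q}(\ol D)$ the dual $E'_q=\dc_{\ol D}^{0,n-q}(X)$ sits in homological position $H_{n-q-1}$, so that degree $n-q$ on the $\ol D$ side matches the degree $q+1$ occurring in the support complex of Corollary~\ref{separe}, and similarly for (ii). One must also confirm the topological hypothesis of the second part of Theorem~\ref{serre}: that $\ci_{n,\bullet}(\ol D)$ is a complex of Fr\'echet--Schwartz spaces and $\check\dc'^{n,\bullet}(\ol D)$ a complex of their duals. This is where the standing assumptions enter—$D$ relatively compact with Lipschitz boundary guarantees that $\ci(\ol D)$ is Fr\'echet--Schwartz with dual $\ec'_{\ol D}(X)$, and that $\dc_{\ol D}(X)$ is Montel with dual $\check\dc'(\ol D)$, as recorded in Section~2. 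No analytic input beyond Theorem~\ref{serre} and Corollary~\ref{separe} is required; the only genuinely nontrivial ingredients have already been established, so the proof reduces to correctly assembling these two facts with the bidegree dictionary above.
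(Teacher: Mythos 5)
Your proof is correct and is essentially the paper's own argument: the authors likewise obtain the corollary by combining Corollary~\ref{separe} (whose hypotheses hold since $X$ is Stein) with the second part of Theorem~\ref{serre} applied to the dual pairs $(\check\dc'^{n,\bullet}(\ol D),\ \dc_{\ol D}^{0,\bullet}(X))$ and $(\ci_{n,\bullet}(\ol D),\ \ec'^{0,\bullet}_{\ol D}(X))$, the Lipschitz hypothesis guaranteeing these are genuinely dual complexes of (duals of) Fr\'echet--Schwartz spaces. Your bidegree bookkeeping, identifying $H_{n-q-1}(E'_\bullet)$ with $H^{0,q+1}_{\ol D,\infty}(X)$ (resp.\ $H^{0,q+1}_{\ol D,cur}(X)$), is the right one.
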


In the special case $q=1$, the following theorem is a direct consequence of Corollary \ref{sepdual}  and Theorem \ref{sep}.

\begin{thm}\label{duality}
Let $X$ be a Stein manifold of complex dimension $n\geq 2$ and $D$ be a relatively compact  open subset of  $X$ with Lipschitz boundary such that $X\setminus D$ is connected. Then

(i) $H^{0,1}_\infty(X\setminus D)$ is Hausdorff if and only if  $\check{H}^{n,n-1}(\ol D)=0$,

(ii) $\check{H}^{0,1}(X\setminus D)$ is Hausdorff if and only if  $H^{n,n-1}_\infty(\ol D)=0$.
\end{thm}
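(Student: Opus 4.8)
The plan is to derive Theorem \ref{duality} directly from the two tools already assembled, namely Corollary \ref{sepdual} and Theorem \ref{sep}, specializing to the case $q=1$. The statement concerns a Stein manifold $X$, so Corollary \ref{sepdual} applies verbatim and gives the duality between the Hausdorff property of the $(0,q)$-cohomology of $X\setminus D$ and the Hausdorff property of the corresponding top-degree cohomology with support (or regularity) conditions on $\ol D$. The point of the theorem is that in bidegree $(0,1)$ one can upgrade \emph{Hausdorff} to \emph{vanishing}, because the connectedness of $X\setminus D$ forces a dichotomy via the separation results of Theorem \ref{sep}.

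First I would prove part (i). Applying Corollary \ref{sepdual}(i) with $q=1$ yields that $H^{0,1}_\infty(X\setminus D)$ is Hausdorff if and only if $\check{H}^{n,n-1}(\ol D)$ is Hausdorff. It therefore remains to show that, under the hypothesis that $X\setminus D$ is connected, the group $\check{H}^{n,n-1}(\ol D)$ is Hausdorff if and only if it vanishes. This is exactly the content of the dichotomy in Theorem \ref{sep}(iii): that theorem asserts that $\check{H}^{0,n-1}(\ol D)$ is either zero or non-Hausdorff. To match the bidegree $(n,n-1)$ appearing here I would invoke Theorem \ref{sep} applied to the complex of $(n,\bullet)$-currents rather than $(0,\bullet)$-currents; since $X$ is Stein and $D$ has Lipschitz boundary with $X\setminus D$ connected, the same argument giving the $(0,\ast)$ dichotomy gives the $(n,\ast)$ dichotomy (the bidegree in the first slot plays no role in the separation argument, which only uses solvability of $\opa$ with prescribed support in bidegree $(p,1)$ for every $0\le p\le n$, as already noted in the sentence preceding Theorem \ref{sep}). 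Combining the two equivalences gives: $H^{0,1}_\infty(X\setminus D)$ is Hausdorff $\iff$ $\check{H}^{n,n-1}(\ol D)$ is Hausdorff $\iff$ $\check{H}^{n,n-1}(\ol D)=0$, which is precisely (i).

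Part (ii) is entirely parallel. Corollary \ref{sepdual}(ii) with $q=1$ gives that $\check{H}^{0,1}(X\setminus D)$ is Hausdorff if and only if $H^{n,n-1}_\infty(\ol D)$ is Hausdorff, and then Theorem \ref{sep}(ii), again transported from bidegree $(0,\ast)$ to bidegree $(n,\ast)$, provides the dichotomy that $H^{n,n-1}_\infty(\ol D)$ is either zero or non-Hausdorff, so that its being Hausdorff is equivalent to its vanishing. Chaining the two equivalences yields (ii).

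The only genuine subtlety I anticipate is the legitimacy of transporting Theorem \ref{sep} from first bidegree $0$ to first bidegree $n$. I would handle this by pointing explicitly to the hypotheses of Theorem \ref{sep} and its source (the statement cites the solvability of the Cauchy--Riemann equation with prescribed support in bidegree $(p,1)$ for \emph{all} $0\le p\le n$), so that the conclusions (ii) and (iii) of Theorem \ref{sep} hold with the $(0,\bullet)$-complexes replaced by the $(n,\bullet)$-complexes whose cohomology groups are $\check{H}^{n,n-1}(\ol D)$ and $H^{n,n-1}_\infty(\ol D)$. Everything else is a formal concatenation of biconditional statements, so beyond verifying this bidegree bookkeeping there is no additional analytic content to supply.
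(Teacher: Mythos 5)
Your proposal is correct and is essentially the paper's own argument: the paper derives Theorem~\ref{duality} in one line as "a direct consequence of Corollary~\ref{sepdual} and Theorem~\ref{sep}," which is precisely the concatenation you carry out. Your extra care about transporting the dichotomy of Theorem~\ref{sep} from bidegree $(0,n-1)$ to $(n,n-1)$ is well placed and is covered by the more general statement the paper cites (Theorem~3.2 of \cite{LaShdualiteL2}), so nothing further is needed.
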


\begin{thm}\label{vanish}
Let $X$ be a complex manifold of complex dimension $n\geq 2$ and $D$ be a relatively compact open subset of  $X$.
Assume $H^{0,q}(X)=0$, $H^{0,q+1}(X)=0$,
for some $1\leq q\leq n-1$, then
$$H^{0,q+1}_{\ol D,\infty}(X)=0\quad\Leftrightarrow\quad H^{0,q}_\infty(X\setminus D)=0;$$
$$H^{0,q+1}_{\ol D,cur}(X)=0\quad\Leftrightarrow\quad \check{H}^{0,q}(X\setminus D)=0.$$
\end{thm}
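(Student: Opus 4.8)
The plan is to read Theorem \ref{vanish} as the vanishing counterpart of Corollary \ref{separe} and to prove it by the same extension/restriction scheme as in Propositions \ref{sep1} and \ref{sep2}, but applied directly to $\opa$-closed representatives instead of through an orthogonality argument. The point is that for a genuine vanishing statement one never has to verify that a candidate primitive is orthogonal to a space of cocycles, so the Serre-duality input (and with it the auxiliary hypotheses $H^{n,n-q+1}(X)=0$ or $H^{n,n-q+1}_c(X)=0$ that Proposition \ref{sep2} required) can be dropped: only the two global vanishings $H^{0,q}(X)=0$ and $H^{0,q+1}(X)=0$ are used, one in each direction. I would prove the smooth-form equivalence in full and then note that the extendable-current equivalence is obtained verbatim by replacing smooth forms with currents, since the only operations involved are extension of a form (resp.\ current) from $X\setminus D$ to $X$ and restriction back, both of which are built into the definitions of $\ci_{0,\bullet}(X\setminus D)$ and $\check\dc'^{0,\bullet}(X\setminus D)$.

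For the implication $H^{0,q+1}_{\ol D,\infty}(X)=0\Rightarrow H^{0,q}_\infty(X\setminus D)=0$, I would take a $\opa$-closed $f\in\ci_{0,q}(X\setminus D)$, extend it to a smooth $\wt f$ on $X$, and observe that $\opa\wt f$ is a $\opa$-closed smooth $(0,q+1)$-form with support in $\ol D$, hence a cocycle for the complex computing $H^{0,q+1}_{\ol D,\infty}(X)$. The hypothesis then produces a smooth $g$ with support in $\ol D$ and $\opa g=\opa\wt f$, so that $\wt f-g$ is $\opa$-closed on all of $X$ and coincides with $f$ on $X\setminus\ol D$. Solving $\wt f-g=\opa h$ globally via $H^{0,q}(X)=0$ and restricting gives $f=\opa h$ on $X\setminus\ol D$, the desired vanishing.

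Conversely, for $H^{0,q}_\infty(X\setminus D)=0\Rightarrow H^{0,q+1}_{\ol D,\infty}(X)=0$, I would start from a $\opa$-closed smooth $(0,q+1)$-form $f$ with support in $\ol D$, use $H^{0,q+1}(X)=0$ to write $f=\opa g$ on $X$, and note that $g$ restricts to a $\opa$-closed form on $X\setminus D$. The hypothesis yields a smooth $h$ on $X\setminus D$ with $\opa h=g$ there; extending $h$ to $\wt h$ on $X$ and setting $u=g-\opa\wt h$ produces a smooth form with $\opa u=f$ that vanishes on $X\setminus\ol D$, hence has support in $\ol D$, so $[f]=0$ in $H^{0,q+1}_{\ol D,\infty}(X)$. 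The two current statements run identically, reading ``current with support in $\ol D$'' for ``smooth form supported in $\ol D$'' and ``extendable current on $X\setminus D$'' for ``smooth form on $X\setminus D$''.

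The step requiring the most care — and essentially the only obstacle — is the bookkeeping of supports across $bD$: one must check that $\opa\wt f$ really defines a class in the $\ol D$-supported complex (it vanishes on the open exterior $X\setminus\ol D$, so its support lies in $\ol D$), that a primitive $g$ supported in $\ol D$ is genuinely $0$ on $X\setminus\ol D$ so that $\wt f-g$ restricts to $f$, and, in the converse direction, that $u=g-\opa\wt h$ vanishes on $X\setminus\ol D$ because $\wt h$ and $h$ agree there. These are precisely the places where one tacitly uses that $D$ is regular enough for its boundary to sit in the closure of $X\setminus\ol D$; in contrast with Corollary \ref{separe}, however, neither a Lipschitz hypothesis nor Serre duality is invoked, which is what allows the statement to hold for an arbitrary relatively compact open $D$ under only the two cohomological assumptions on $X$.
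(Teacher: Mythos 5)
Your proof is correct and follows essentially the same route as the paper, whose proof of Theorem~\ref{vanish} is literally the one-line remark that it is analogous to Propositions~\ref{sep1}, \ref{sep2} and Corollary~\ref{separe}; your write-up supplies those details, including the accurate observation that the orthogonality/duality step (and hence the auxiliary hypothesis $H^{n,n-q+1}(X)=0$ or $H^{n,n-q+1}_c(X)=0$ of Proposition~\ref{sep2}) is not needed for the vanishing statements. The support-bookkeeping caveat you flag at $bD$ is likewise implicit in the paper's own arguments, so no discrepancy arises.
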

\begin{proof}
The proof is analogous to the proof of Corollary \ref{separe}, Propositions \ref{sep1}, and \ref{sep2}.
\end{proof}
\begin{cor}\label{dualvanish}
Let $X$ be a Stein manifold of complex dimension $n\geq 2$ and $D$ be a relatively compact  open subset of $X$ with Lipschitz boundary such that $X\setminus D$ is connected. Consider the assertions

(i) For all $1\leq q\leq n-2$, $\check{H}^{0,q}(X\setminus D)=0$ and $\check{H}^{0,n-1}(X\setminus D)$ is Hausdorff;

(ii) For all $2\leq q\leq n-1$, $H^{0,q}_{\ol D,cur}(X)=0$ and $H^{0,n}_{\ol D,cur}(X)$ is Hausdorff;

(iii) for all $1\leq q\leq n-1$, $H^{n,q}_\infty(\ol D)=0$.
\medskip

(i') For all $1\leq q\leq n-2$, $H^{0,q}_\infty(X\setminus D)=0$ and $H^{0,n-1}_\infty(X\setminus D)$ is Hausdorff;

(ii') For all $2\leq q\leq n-1$, $H^{0,q}_{\ol D,\infty}(X)=0$ and $H^{0,n}_{\ol D,\infty}(X)$ is Hausdorff;

(iii') for all $1\leq q\leq n-1$, $\check{H}^{n,q}(\ol D)=0$.
\medskip

{\parindent=0pt Then the triplets of assertions (i), (ii) and (iii), respectively (i'), (ii') and (iii'), are equivalent.}
\end{cor}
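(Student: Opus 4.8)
The plan is to prove each triplet by the chain $(i)\Leftrightarrow(ii)\Leftrightarrow(iii)$ (respectively $(i')\Leftrightarrow(ii')\Leftrightarrow(iii')$): the first link compares $X\setminus D$ with the groups supported in $\ol D$ and is handled by the comparison results already proved, while the second link relates the $\ol D$-supported groups to the intrinsic groups on $\ol D$ and is pure Serre duality. Throughout I would use that $X$ is Stein, so $H^{0,q}(X)=0$ for every $q\ge 1$ by Cartan's Theorem~B; this makes the standing hypotheses of Theorem~\ref{vanish} and Corollary~\ref{separe} automatically satisfied.

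For the first link $(i)\Leftrightarrow(ii)$, I would match the vanishing parts via Theorem~\ref{vanish}: its second equivalence $H^{0,q+1}_{\ol D,cur}(X)=0\Leftrightarrow\check H^{0,q}(X\setminus D)=0$, read for $1\le q\le n-2$, identifies the vanishing of $\check H^{0,q}(X\setminus D)$ for $1\le q\le n-2$ with the vanishing of $H^{0,q}_{\ol D,cur}(X)$ for $2\le q\le n-1$. For the top degree I would invoke Corollary~\ref{separe}: its second equivalence, that $H^{0,q+1}_{\ol D,cur}(X)$ is Hausdorff if and only if $\check H^{0,q}(X\setminus\ol D)$ is Hausdorff, taken at $q=n-1$, matches the Hausdorffness of $H^{0,n}_{\ol D,cur}(X)$ with that of $\check H^{0,n-1}(X\setminus D)$. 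The primed link $(i')\Leftrightarrow(ii')$ is identical, using instead the first (smooth) equivalences in Theorem~\ref{vanish} and Corollary~\ref{separe}.

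For the second link $(ii)\Leftrightarrow(iii)$, I would apply Serre duality (Theorem~\ref{serre}) to the dual pair $(\ci_{n,\bullet}(\ol D),\opa)$ and $(\ec'^{0,\bullet}_{\ol D}(X),\opa)$, whose cohomology groups are $H^{n,q}_\infty(\ol D)$ and $H^{0,n-q}_{\ol D,cur}(X)$ respectively; these are FS and DFS spaces, so both halves of the theorem apply. For fixed $j$ with $1\le j\le n-1$, the vanishing $H^{0,n-j}_{\ol D,cur}(X)=0$ forces $H^{n,j}_\infty(\ol D)$ to be either zero or non-Hausdorff, while Hausdorffness of $H^{0,n-j+1}_{\ol D,cur}(X)$ forces $H^{n,j}_\infty(\ol D)$ to be Hausdorff; together these give $H^{n,j}_\infty(\ol D)=0$, which is $(iii)$. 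The reverse implication is the same argument run through the reflexivity of the pair, which identifies the bidual complex with the original. The primed link $(ii')\Leftrightarrow(iii')$ uses the dual pair $(\check\dc'^{n,\bullet}(\ol D),\opa)$ and $(\dc^{0,\bullet}_{\ol D}(X),\opa)$, with cohomologies $\check H^{n,q}(\ol D)$ and $H^{0,n-q}_{\ol D,\infty}(X)$.

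The hard part is the bookkeeping at the two ends of the range, where the ``zero or non-Hausdorff'' dichotomy of Theorem~\ref{serre} must be collapsed to genuine vanishing. Running the duality for $j=n-1$ requires $H^{0,1}_{\ol D,cur}(X)=0$, which is not listed in $(ii)$; I would supply it for free from the solvability of $\opa$ with prescribed support in $\ol D$ in bidegree $(0,1)$, valid because $X$ is Stein of dimension $n\ge 2$ and $X\setminus D$ is connected (the same input feeding Theorem~\ref{sep}), noting that the intermediate groups $H^{0,q}_{\ol D,cur}(X)$ with $2\le q\le n-1$ vanish and are therefore Hausdorff. Running it for $j=1$ requires precisely the Hausdorffness of $H^{0,n}_{\ol D,cur}(X)$, which is exactly the top-degree hypothesis of $(ii)$. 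Thus every vanishing and Hausdorff input needed across $1\le j\le n-1$ is accounted for, and the only genuine subtlety is verifying that the dichotomy closes at each degree; the primed triplet is handled verbatim with smooth forms and extendable currents interchanged.
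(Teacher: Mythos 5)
Your proposal is correct and follows essentially the same route as the paper: the paper's one-line proof invokes Serre duality together with Corollary~\ref{sepdual}, Theorem~\ref{duality} and Theorem~\ref{vanish}, which, once unpacked, is exactly your chain $(i)\Leftrightarrow(ii)$ via Theorem~\ref{vanish}/Corollary~\ref{separe} and $(ii)\Leftrightarrow(iii)$ via Serre duality for the pairs $(\ci_{n,\bullet}(\ol D),\ec'^{0,\bullet}_{\ol D}(X))$ and $(\check\dc'^{n,\bullet}(\ol D),\dc^{0,\bullet}_{\ol D}(X))$. Your supplying of $H^{0,1}_{\ol D,cur}(X)=0$ (and its smooth analogue) at the degree-$n-1$ endpoint is precisely the point the authors address in the remark immediately following the corollary, so the bookkeeping closes as you claim.
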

\begin{proof}
Using the  Serre duality the corollary follows from Corollary \ref{sepdual}, Theorem \ref{duality} and Theorem \ref{vanish}.
\end{proof}

Note that, under the hypotheses of Corollary \ref{dualvanish}, the assertions (ii) and (ii') still hold for $q=0$ and $q=1$ by analytic continuation and the fact that $H^{0,1}_c(X)=0$   and $X\setminus D$ is connected.
\medskip

As mentioned above in Section~\ref{s3}, a domain $D$ in $\cb^n$ is pseudoconvex if and only if we have $H^{0,q}(D)=0$ for all $1\leq q\leq n-1$.
Analogous results also hold for the Dolbeault cohomology of forms smooth up to the boundary for any $n$ and of extendable currents for $n=2$.

\begin{thm}\label{pcn}
Let $D\subset\cb^n$ be a domain such that interior($\ol D$)$=D$. If $H^{0,q}_\infty(\ol D)$  is finite dimensional for any $1\leq q\leq n-1$, then $D$ is pseudoconvex. Moreover, when $n=2$,   $D$ is pseudoconvex provided $\check{H}^{0,1}(\ol D)$ is finite dimensional.
\end{thm}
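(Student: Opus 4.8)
The plan is to argue by contraposition: assuming $D$ is not pseudoconvex, I will produce infinitely many linearly independent classes in $H^{0,q}_\infty(\ol D)$ (resp.\ in $\check{H}^{0,1}(\ol D)$ when $n=2$) for some $q$ with $1\le q\le n-1$, contradicting the finite dimensionality hypothesis. Two structural observations drive the argument. First, for a complex of Fr\'echet--Schwartz spaces such as $(\ci_{0,q}(\ol D),\opa)$, or of duals of such spaces as in the extendable current complex, a continuous differential with finite-dimensional cokernel has closed range; hence finite dimensionality of the cohomology forces it to be Hausdorff (this is the standard Serre/functional-analytic closed-range fact), so I may freely invoke separation statements and Serre duality. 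Second, each group $H^{0,q}_\infty(\ol D)$ and $\check{H}^{0,q}(\ol D)$ carries a natural module structure over the ring $\oc(\cb^n)$ of entire functions: multiplication by an entire $h$ sends $\opa$-closed forms to $\opa$-closed forms and $\opa$-exact to $\opa$-exact, and preserves both smoothness up to $bD$ and extendability. If the group is finite dimensional over $\cb$, the action factors through a finite-dimensional commutative quotient algebra of $\oc(\cb^n)$, so the commuting operators ``multiplication by $z_1,\dots,z_n$'' have a finite joint spectrum $A\subset\cb^n$; equivalently, the module is supported on the finite set $A$.

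With this set-up the proof reduces to showing that a non-pseudoconvex $D$ forces the support $A$ to be infinite. By the solution of the Levi problem, $D$ fails to be pseudoconvex precisely when it is not a domain of holomorphy, i.e.\ when there is a boundary point $p\in bD$ and a Hartogs configuration (a continuity-principle family of analytic discs) across which every $f\in\oc(D)$ extends; the hypothesis $\mathrm{int}(\ol D)=D$ guarantees that this extension genuinely enlarges $D$ rather than merely filling a removable part of the boundary. I only need to locate \emph{one} degree $q\in[1,n-1]$ in which the obstruction appears. When $bD$ happens to be Lipschitz, the top degree can be dispatched immediately: Theorem~\ref{sep}(ii) (resp.\ (iii), which covers the whole case $n=2$, where $q=1=n-1$ is the only degree) says $H^{0,n-1}_\infty(\ol D)$ is either $0$ or non-Hausdorff, so the Hausdorff property from the first step yields vanishing there and pushes any obstruction into lower degrees; in general one argues intrinsically through the module support.

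The heart of the matter is the construction of the promised classes from the extension phenomenon. For a point $a$ lying in the swept-out (``virtually interior'') region but with $a\notin\ol D$, the Bochner--Martinelli kernel $\beta_a$, transported to bidegree $(0,n-1)$ via the trivialization $dz_1\wedge\cdots\wedge dz_n$ of the canonical bundle of $\cb^n$, is a $\opa$-closed form that is smooth up to $bD$ because its singularity sits off $\ol D$; wedging with lower-degree factors adapted to the geometry of the Hartogs figure produces $\opa$-closed $(0,q)$-forms in the relevant intermediate degree. Pairing $\beta_a$ against holomorphic test data and using the residue/jump formula shows that $[\beta_a]\neq 0$ exactly when $a$ cannot be reached from the outside, and that $\oc(\cb^n)$ acts on $[\beta_a]$ with joint spectral value $a$; letting $a$ range over a continuum in the extension region therefore exhibits infinitely many points of $A$. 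This contradicts the finiteness of $A$, so $D$ must be pseudoconvex. For $n=2$ the extendable-current case is handled identically after replacing $\ci_{0,q}(\ol D)$ by $\check\dc'^{0,q}(\ol D)$ and $\beta_a$ by its current-theoretic analogue.

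The step I expect to be the main obstacle is precisely this construction of a continuum of independent, boundary-regular (resp.\ extendable) obstruction classes. The difficulty is twofold: $D$ is only assumed to satisfy $\mathrm{int}(\ol D)=D$, so its boundary may be far from Lipschitz, and then neither the Serre-duality identifications of Sections~2--3 (which need a Lipschitz boundary to identify duals with spaces of currents supported in $\ol D$) nor the boundary dichotomy of Theorem~\ref{sep}(ii),(iii) is directly available, so the argument must be carried out intrinsically; moreover one must keep the kernel representatives smooth up to this irregular boundary while verifying non-triviality of their classes and pinning down the exact degree $q\in[1,n-1]$ in which they live. Organizing these classes through the $\oc(\cb^n)$-module support is what converts the qualitative Hartogs extension into the required quantitative failure of finite dimensionality. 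A secondary technical point is to ensure, or to circumvent by passing to the unbounded component, the connectedness of $\cb^n\setminus D$ whenever Theorem~\ref{sep} is invoked.
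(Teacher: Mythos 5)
Your toolbox is the right one---the $\oc(\cb^n)$-module structure (this is essentially Laufer's dichotomy argument \cite{Laufer75}, which the paper does invoke, but only to reduce ``finite dimensional'' to ``zero''), the Bochner--Martinelli kernel centered at a point $0\in\wt D\setminus\ol D$ of the envelope (smooth up to $bD$ precisely because its singularity is off $\ol D$), and the final detection of a pole by restricting to a complex line through that point. But the logical architecture does not go through, and it breaks exactly at the step you flagged as the main obstacle. The claim that $[\beta_a]\neq 0$ in $H^{0,n-1}_\infty(\ol D)$ ``exactly when $a$ cannot be reached from the outside'' is false in general: for a standard Hartogs figure $H=P_1\cup P_2$ (two polydisc-type pieces with pseudoconvex intersection) in $\cb^n$ with $n\ge 3$, Mayer--Vietoris gives $H^{0,q}(H)\cong H^{0,q-1}(P_1\cap P_2)=0$ for all $q\ge 2$, so every Bochner--Martinelli class in degree $n-1$ is exact even though every point of the envelope is an extension point; the obstruction lives in degree $1$. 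Moreover there is no canonical ``wedging with lower-degree factors'' that attaches to $a$ a class in the correct intermediate degree: the intermediate forms only come into existence after one has solved a chain of $\opa$-equations using the assumed vanishing of the higher-degree groups, and they depend on the choice of primitives. Consequently you cannot exhibit a continuum of independent classes in a single fixed degree, and the support-of-the-module argument has nothing to bite on.

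The paper instead proves the contrapositive of a collective statement. First, finite dimensionality of each group is reduced to its vanishing by Laufer's argument (your module observation, used once and for all). Then, assuming $H^{0,q}_\infty(\ol D)=0$ for all $1\le q\le n-1$ and that $D$ is not pseudoconvex, it runs the Koszul-type induction {\bf H(q)} of \cite{Laufer66}: starting from a primitive of $u(1,\dots,n)$ (the Bochner--Martinelli form with pole at $0\notin\ol D$), one inductively divides by the coordinates, at each stage solving $\opa v(K)=w(K)$ with data smooth up to $\ol D$; after $n-1$ steps this produces a holomorphic $F$ on $D$ whose restriction to the $z_n$-axis equals $c/z_n$, contradicting extendability to $\wt D\ni 0$. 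Non-triviality is thus detected only at the very end, in degree $0$, not degree by degree. Note also that this route uses no duality, no Lipschitz hypothesis, and no appeal to Theorem~\ref{sep}, which is why the theorem holds under the sole assumption $\mathrm{int}(\ol D)=D$; and the $n=2$ extendable-current case is not ``identical'' but rests on the ellipticity of $\opa$ on functions, which upgrades the extendable primitive $v$ of the Bochner--Martinelli $(0,1)$-form to a smooth one before forming $F=z_1v+\ol z_2/|z|^2$---this is precisely why that part of the statement is confined to $n=2$.
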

\begin{proof}
Following  Laufer's argument (\cite{Laufer75}; see also Theorem 5.1 \cite{Lalivre}),  we obtain that if $H^{0,q}_\infty(\ol D)$  (respectively $\check{H}^{0,q}(\ol D)$) is finite dimensional, then $H^{0,q}_\infty(\ol D)=0$  (respectively $\check{H}^{0,q}(\ol D)=0$). The Laufer's  argument  can be applied here because the spaces $\ec(\ol D)$ and $\check{\dc}'(\ol D)$ are invariant under differentiation and multiplication by polynomials. Thus we can assume that  $H^{0,q}_\infty(\ol D)=0$ for  all $1\leq q\leq n-1$ or  in the case when $n=2$, $\check{H}^{0,1}(\ol D)=0$.

The proof uses the forms introduced in \cite{Laufer66} (see also \cite{Fu05}). We will prove by contradiction.
Suppose that  $D$ is not pseudoconvex. Then there exists a domain $\wt D$ strictly containing $D$ such that any holomorphic function on $D$ extends holomorphically to $\wt D$. Since interior($\ol D$)$=D$, after a translation and a rotation we may assume that $0\in\wt D\setminus\ol D$ and there exists a point $z_0$ in the intersection of the plane $\{(z_1,\dots,z_n)\in\cb^2~|~z_1=\dots=z_{n-1}=0\}$ with $D$ that belongs to the same connected components (as the origin) of the  intersection of that plane with $\wt D$.

For any integer $q$ such that $1\leq q\leq n$ and $\{k_1,\dots,k_q\}\subset\{1, \ldots, n\}$, we set
$$
u(k_1,\dots,k_q)=\frac{(q-1)!}{|z|^{2q}}\sum_{j=1}^q (-1)^j\bar z_{k_j} \widetilde{d\bar
	z_{k_j}},
$$
where $\widetilde{d\bar
	z_{k_j}}=d\bar{z}_{k_1}\wedge
\ldots\wedge\widehat{d\bar{z}_{k_j}}\wedge\ldots\wedge
d\bar{z}_{k_q}$. (Here, as usual, $\widehat{d\bar{z}_{k_j}}$ indicates the deletion of
$d\bar z_{k_j}$ from the wedge product.)  Note that
$u(k_1, \ldots, k_q)$ is a smooth form on
$\C^n\setminus\{ 0\}$. Since $0\notin \overline{D}$, $u(k_1, \ldots, k_q)\in \ci_{(0, q-1)}(\ol{D})$.
Moreover, $u(k_1, \ldots, k_q)$ is skew-symmetric with respect to the
indexes in the tuple $(k_1, \ldots, k_q)$.  In particular, $u(k_1, \ldots, k_q)=0$ when two $k_j$'s are identical.
 A direct calculation yields that
\begin{equation}\label{dbar}
\opa u(k_1,\dots,k_q)=\sum_{l=1}^n z_l u(l, k_1,\dots,k_q).
\end{equation}
For any $1\leq q\leq n-1$, we consider the following assertion.

\smallskip\smallskip

\noindent{\bf H(q)}: For all integer $r<q$ and all multi-index $K=(k_1,\dots,k_r)$, setting $K=\emptyset$ if $r=0$, there exists a smooth $(0,n-r-2)$-form $v(K)$ on $\ol D$ such that
$$\opa v(K)=\sum_{j=1}^r (-1)^j z_{k_j} v(K\setminus k_j)+(-1)^{r+|K|} u((1,\dots,n)\setminus K),$$
where $|K|=k_1+\dots+k_r$ and
$(K \setminus J)$ denotes the tuple of remaining indexes after deleting those
in $J$ from $K$.

Note that $u(1,\dots,n)$ is a $\opa$-closed smooth $(0,n-1)$-form on $\cb^n\setminus\{0\}$ which contains $\ol D$. Since $H^{0,n-1}_\infty(\ol D)=0$, there exists a smooth $(0,n-2)$-form $v(\emptyset)$ on $\ol D$ such that $\opa v(\emptyset)= u(1,\dots,n)$. Therefore {\bf H(1)} is satisfied.

Let us prove now that if $1\leq q\leq n-2$ and {\bf H(q)} is satisfied,  then {\bf H(q+1)} is satisfied. It is sufficient to prove the existence of the $v(K)$'s satisfying the assertion {\bf H(q+1)} for any multi-index of length $q$, the other ones for $r<q$ already exist by {\bf H(q)}.
Let $K=(k_1,\dots,k_q)$, we set
$$w(K)=\sum_{j=1}^{q} (-1)^j z_j v(K\setminus k_ j)+(-1)^{q+|K|} u((1,\dots,n)\setminus K).$$
The $(0,n-q-1)$-form $w(K)$ is smooth on $\ol D$ and moreover, using \eqref{dbar} and the hypothesis {\bf H(q)}, a straightforward calculation proves that $\opa w(K)=0$ on $D$. Since $H^{0,n-q-1}_\infty(\ol D)=0$, there exists a smooth $(0,n-q-2)$-form $v(K)$ on $\ol D$ such that
$$\opa v(K)=w(K)=\sum_{j=1}^{q} (-1)^j z_j v(K\setminus k_ j)+(-1)^{q+|K|} u((1,\dots,n)\setminus K).$$

A finite induction process implies that {\bf H(n-1)} is satisfied and for $K=(1,\dots,n-1)$, we can consider the function
$$F=w(1,\dots,n-1)=\sum_{j=1}^{n-1} (-1)^j z_j v((1,\dots,n-1)\setminus j)-(-1)^{n+\frac{n(n-1)}{2}} u(n).$$
It is smooth on $\ol D$ and satisfies $\opa F=0$ on $D$. As $F$ is a  holomorphic function on $D$, it can be  extended holomorphically to $\wt D$. However, we have $F(0,\dots,z_n)=(-1)^{1+\frac{n(n-1)}{2}}\frac{1}{z_n}$ on $D\cap\{z_1=\dots=z_{n-1}=0\}$, which is holomorphic and singular at $z_n=0$, which gives the contradiction since $0\in\wt D\setminus D$.

When $n=2$, let us denote by $B(z_1,z_2)$ the$(0,1)$-form $\frac{\ol z_1~d\ol z_2-\ol z_2~d\ol z_1}{|z|^4}$ derived from the Bochner-Martinelli kernel in $\cb^2$, it is a $\opa$-closed form on $\cb^2\setminus\{0\}$. Then the $L^1$-function $\frac{\ol z_2}{|z|^2}$ defines a distribution in $\cb^2$ which satisfies $\opa(\frac{-\ol z_2}{|z|^2})=z_1B(z_1,z_2)$ on $\cb^2\setminus\{0\}$.
On the other hand, if  $\check{H}^{0,1}(\ol D)=0$, there exists an extendable distribution $v$ such that $\opa v=B$ on $D$ and by regularity of the $\opa$ in bidegree $(0,1)$, $v$ is smooth on $D$, since $B$ is smooth on $\cb^2\setminus\{0\}$. Set $F=z_1v+\frac{\ol z_2}{|z|^2}$, then $F$ is a holomorphic function on $D$, so it  extends holomorphically to $\wt D$, but we have $F(0,z_2)=\frac{1}{z_2}$ on $D\cap\{z_1=0\}$, which is holomorphic and singular at $z_2=0$. This  gives the contradiction since $0\in\wt D\setminus D$.
\end{proof}

Returning  to the case when  $\Omega=\wt\Omega\setminus\ol D$ is a bounded domain in $\cb^n$, where $D$ is the union of all relatively compact connected components of $\cb^n\setminus\ol\Omega$   as in section \ref{s3}.   We can easily derive the following corollary from Corollary \ref{dualvanish}
and the results of section \ref{s3}.

\begin{cor}\label{caract}
Let $D\subset\subset\wt\Omega$ be bounded open subsets of $\cb^n$, $n\geq 2$, such that $\cb^n\setminus\wt\Omega$ is connected. Assume $D$ has  Lipschitz boundary and $\Omega=\wt\Omega\setminus\ol D$ is connected. Consider the assertions

(i) For all $1\leq q\leq n-2$, $\check{H}^{0,q}(\wt\Omega\setminus D)=0$ and $\check{H}^{0,n-1}(\wt\Omega\setminus D)$ is Hausdorff;

(ii) for all $1\leq q\leq n-1$, $H^{n,q}_\infty(\ol D)=0$ and $\wt\Omega$ is pseudoconvex.
\medskip

(i') For all $1\leq q\leq n-2$, $H^{0,q}_\infty(\wt\Omega\setminus D)=0$ and $H^{0,n-1}_\infty(\wt\Omega\setminus D)$ is Hausdorff;

(ii') for all $1\leq q\leq n-1$, $\check{H}^{n,q}(\ol D)=0$ and $\wt\Omega$ is pseudoconvex.
\medskip

{\parindent=0pt Then the pairs of assertions (i) and (ii), respectively (i') and (ii'), are equivalent.}
\end{cor}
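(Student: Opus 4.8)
The plan is to reduce the entire statement to Corollary~\ref{dualvanish} applied on the manifold $X=\wt\Omega$, the only genuine input being that $\wt\Omega$ must be Stein before that corollary can be invoked. Since a pseudoconvex domain in $\cb^n$ is Stein, pseudoconvexity of $\wt\Omega$ --- which appears explicitly in (ii) and (ii') --- is exactly what unlocks Corollary~\ref{dualvanish}, while the results of Section~\ref{s3} will be used to \emph{produce} that pseudoconvexity in the reverse direction. Throughout I would note that the connectedness of $\Omega=\wt\Omega\setminus\ol D$ together with the Lipschitz boundary of $D$ guarantees that $\wt\Omega\setminus D$ is connected, so that once $\wt\Omega$ is known to be pseudoconvex, all the hypotheses of Corollary~\ref{dualvanish} (Stein $X$, relatively compact $D$ with Lipschitz boundary, $X\setminus D$ connected) are met with $X=\wt\Omega$.

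For the implications (ii) $\Rightarrow$ (i) and (ii') $\Rightarrow$ (i'), pseudoconvexity of $\wt\Omega$ is assumed outright. First I would observe that $\wt\Omega$ is then Stein, so Corollary~\ref{dualvanish} is available with $X=\wt\Omega$. The hypothesis $H^{n,q}_\infty(\ol D)=0$ for $1\leq q\leq n-1$ is precisely assertion (iii) of Corollary~\ref{dualvanish}, and the equivalence (iii) $\Leftrightarrow$ (i) there yields exactly assertion (i) of the present corollary. The primed case is identical: $\check{H}^{n,q}(\ol D)=0$ is assertion (iii') and the equivalence (iii') $\Leftrightarrow$ (i') delivers (i').

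For the converse implications (i) $\Rightarrow$ (ii) and (i') $\Rightarrow$ (ii'), the first task is to extract pseudoconvexity of $\wt\Omega$ from the cohomological hypotheses on $\Omega$, and this is where Section~\ref{s3} enters. Assertion (i) is precisely the hypothesis of Proposition~\ref{pcext}, which (using that $\cb^n\setminus\wt\Omega$ and $\Omega$ are connected) forces $\wt\Omega$ to be pseudoconvex; in the primed case assertion (i') is the hypothesis of Corollary~\ref{pcsmooth}, again giving $\wt\Omega$ pseudoconvex. Once $\wt\Omega$ is pseudoconvex, hence Stein, Corollary~\ref{dualvanish} applies, and the equivalence (i) $\Leftrightarrow$ (iii) (respectively (i') $\Leftrightarrow$ (iii')) converts the assumed conditions on $\wt\Omega\setminus D$ into $H^{n,q}_\infty(\ol D)=0$ (respectively $\check{H}^{n,q}(\ol D)=0$) for $1\leq q\leq n-1$. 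Combining this with the pseudoconvexity just obtained gives assertion (ii) (respectively (ii')).

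The argument carries essentially no analytic difficulty of its own: all the substance resides in Section~\ref{s3} and in Corollary~\ref{dualvanish}. The only point requiring care --- and the closest thing to an obstacle --- is the logical interlocking of Steinness: Corollary~\ref{dualvanish} presupposes that $\wt\Omega$ is Stein, yet in the direction (i) $\Rightarrow$ (ii) pseudoconvexity is part of the conclusion rather than of the hypothesis. This is resolved by first deriving pseudoconvexity independently from Section~\ref{s3} and only afterward invoking Corollary~\ref{dualvanish}, so that no circularity occurs.
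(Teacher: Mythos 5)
Your proposal is correct and follows essentially the same route as the paper, which derives the corollary exactly by combining Corollary~\ref{dualvanish} (applied with $X=\wt\Omega$ once $\wt\Omega$ is known to be Stein) with Corollary~\ref{pcsmooth} and Proposition~\ref{pcext} from Section~\ref{s3} to supply pseudoconvexity in the direction where it is part of the conclusion. Your explicit handling of the Steinness bootstrapping and the correct pairing of (i) with (iii) and (i') with (iii') in Corollary~\ref{dualvanish} match the intended argument.
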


We are now in position to give characterizations of pseudoconvexity of the inside and outside boundaries for domains with holes in terms of their Dolbeault cohomolgy on various spaces,
depending on the regularity of the  boundary of the holes.

\begin{cor}\label{eqcur}
Let $D\subset\subset\wt\Omega$ be two relatively compact open subsets of $\cb^n$, $n\geq 2$, such that both $\cb^n\setminus\wt\Omega$ and $\wt\Omega\setminus D$ are connected. Assume $D$ has smooth boundary. Then $\check{H}^{0,q}(\wt\Omega\setminus D)=0$, for all $1\leq q\leq n-2$, and $\check{H}^{0,n-1}(\wt\Omega\setminus D)$ is Hausdorff if and only if $\wt\Omega$ and $D$ are pseudoconvex.
\end{cor}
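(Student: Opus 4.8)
The plan is to observe that assertion (i) of this corollary is verbatim assertion (i) of Corollary \ref{caract}, and then to translate the cohomological conditions there into genuine pseudoconvexity. Since $D$ has smooth boundary it is a fortiori Lipschitz, and because $bD$ is a smooth hypersurface the connectedness of $\wt\Omega\setminus D$ forces $\Omega=\wt\Omega\setminus\ol D$ to be connected as well; thus all hypotheses of Corollary \ref{caract} are satisfied. By that corollary, the condition ``$\check{H}^{0,q}(\wt\Omega\setminus D)=0$ for $1\le q\le n-2$ and $\check{H}^{0,n-1}(\wt\Omega\setminus D)$ Hausdorff'' is equivalent to ``$\wt\Omega$ is pseudoconvex and $H^{n,q}_\infty(\ol D)=0$ for all $1\le q\le n-1$''. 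It therefore suffices to prove that, for a smoothly bounded $D\subset\subset\cb^n$,
$$H^{n,q}_\infty(\ol D)=0\ \text{for all}\ 1\le q\le n-1\quad\Longleftrightarrow\quad D\ \text{is pseudoconvex.}$$

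First I would eliminate the bidegree shift. On $\cb^n$ the holomorphic volume form $\omega=dz_1\wedge\cdots\wedge dz_n$ is nowhere vanishing and satisfies $\opa\omega=0$, so wedging with $\omega$ defines a topological isomorphism of complexes $(\ci_{0,\bullet}(\ol D),\opa)\to(\ci_{n,\bullet}(\ol D),\opa)$, commuting with $\opa$ up to the harmless sign $(-1)^n$ and invertible by division by $\omega$. Hence $H^{n,q}_\infty(\ol D)\cong H^{0,q}_\infty(\ol D)$ for every $q$, and the displayed equivalence is reduced to the corresponding statement for $H^{0,q}_\infty(\ol D)$.

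For the direction $(\Leftarrow)$ I would invoke the global regularity of the $\opa$-Neumann problem on smoothly bounded pseudoconvex domains (Kohn, \cite{Ko74}; cf.\ the Introduction): on each connected component of $D$ the equation $\opa u=f$ is solvable with $u$ smooth up to the boundary for every $\opa$-closed $f\in\ci_{0,q}(\ol D)$, $q\ge 1$, so $H^{0,q}_\infty(\ol D)=0$ for $1\le q\le n-1$, the cohomology of $\ol D$ being the direct sum over its finitely many components. For the converse $(\Rightarrow)$, the vanishing $H^{0,q}_\infty(\ol D)=0$ is in particular finite dimensionality; since smoothness of $bD$ guarantees $\mathrm{interior}(\ol D)=D$ on each component, Theorem \ref{pcn} applied componentwise gives that $D$ is pseudoconvex.

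Putting the pieces together, assertion (i) is equivalent to ``$\wt\Omega$ pseudoconvex and $H^{0,q}_\infty(\ol D)=0$ for $1\le q\le n-1$'', which in turn is equivalent to ``$\wt\Omega$ and $D$ both pseudoconvex'', as desired. I expect the only genuinely external ingredient, and hence the step to get right, to be the backward implication, which rests on Kohn's smooth-up-to-the-boundary solvability on pseudoconvex domains; the remaining steps—the bidegree isomorphism induced by $\omega$, the reduction through Corollary \ref{caract}, and the application of Theorem \ref{pcn}—are formal once the hypotheses of those results (Lipschitz/smooth boundary, connectedness of $\Omega$, $\mathrm{interior}(\ol D)=D$) have been verified.
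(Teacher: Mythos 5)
Your proposal is correct and takes essentially the same route as the paper: reduce via Corollary \ref{caract} to the equivalence of ``$H^{n,q}_\infty(\ol D)=0$ for $1\le q\le n-1$'' with pseudoconvexity of $D$, proving one direction by Kohn's smooth solvability and the other by Theorem \ref{pcn}. The extra details you supply (the bidegree shift by $dz_1\wedge\cdots\wedge dz_n$, the connectedness of $\Omega$, and the componentwise application) are simply left implicit in the paper's two-sentence proof.
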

\begin{proof}
The necessary condition is a direct consequence of  Corollary \ref{caract} and  Theorem \ref{pcn}. To get the sufficient condition, we use Kohn's result \cite{Ko74}, which asserts that if $D$ is a pseudoconvex domain with smooth boundary then $H^{n,q}_\infty(\ol D)=0$, for all $1\leq q\leq n-1$ and apply Corollary \ref{caract}.
\end{proof}

The following corollary is an analogous of Corollary~\ref{eqcur} for smooth forms in $\cb^2$ under the assumption that $D$ has Lipschitz boundary. The reason for the restriction to dimension~2 comes from the proof of Theorem~\ref{pcn}.  Note that in the cases of the extendable current cohomology, since the restriction of a distibution to a complex hyperplane does not exist in general (see also the remark before Lemma~\ref{lm:rest2}),  we need the regularity property for the $\opa$-operator which in this case holds only in bidegree $(0,1)$.

 \begin{cor}\label{eqsmooth}
Let $D\subset\subset\wt\Omega$ be two relatively compact open subsets of $\cb^2$ such that both $\cb^2\setminus\wt\Omega$ and $\wt\Omega\setminus D$ are connected. Assume $D$ has Lipschitz boundary. Then $H^{0,1}_\infty(\wt\Omega\setminus D)$ is Hausdorff if and only if $\wt\Omega$ and $D$ are pseudoconvex.
\end{cor}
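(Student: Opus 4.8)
The plan is to deduce the corollary from Corollary \ref{caract} by specializing to $n=2$, thereby isolating the one genuinely analytic input, which concerns the hole $D$ alone.

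First I would apply the primed equivalence (i')$\Leftrightarrow$(ii') of Corollary \ref{caract} with $n=2$. Since the range $1\le q\le n-2$ is then empty, assertion (i') reduces to ``$H^{0,1}_\infty(\wt\Omega\setminus D)$ is Hausdorff'' and assertion (ii') reduces to ``$\check H^{2,1}(\ol D)=0$ and $\wt\Omega$ is pseudoconvex''. The hypotheses of Corollary \ref{caract} are met, since $\cb^2\setminus\wt\Omega$ is connected, $D$ is Lipschitz, and $\Omega=\wt\Omega\setminus\ol D$ is connected because $\wt\Omega\setminus D$ is and the Lipschitz set $bD$ does not disconnect. Thus
\[
H^{0,1}_\infty(\wt\Omega\setminus D)\ \text{is Hausdorff}\quad\Longleftrightarrow\quad \check H^{2,1}(\ol D)=0\ \text{and}\ \wt\Omega\ \text{is pseudoconvex}.
\]
Wedging with the nowhere-vanishing holomorphic form $dz_1\wedge dz_2$ gives an isomorphism of the extendable-current $\opa$-complexes in bidegrees $(0,\bullet)$ and $(2,\bullet)$ (it commutes with $\opa$ and preserves extendability), so $\check H^{2,1}(\ol D)\cong\check H^{0,1}(\ol D)$. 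Hence the corollary is equivalent to: once $\wt\Omega$ is pseudoconvex, $\check H^{0,1}(\ol D)=0$ if and only if $D$ is pseudoconvex. If $D$ is disconnected, the Lipschitz hypothesis makes it a finite disjoint union of Lipschitz domains and all cohomologies split as direct sums, so one argues componentwise.

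The forward implication is immediate from Theorem \ref{pcn}: since $D$ is Lipschitz we have interior$(\ol D)=D$, and the $n=2$ part of that theorem says that finite-dimensionality of $\check H^{0,1}(\ol D)$ — in particular its vanishing — forces $D$ to be pseudoconvex. Together with the pseudoconvexity of $\wt\Omega$, read off from the displayed equivalence (and also a consequence of Corollary \ref{pcsmooth}), this settles the ``only if'' direction.

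The main obstacle is the reverse implication: \emph{if $D\subset\cb^2$ is pseudoconvex with Lipschitz boundary, then $\check H^{0,1}(\ol D)=0$}. Here one cannot imitate the proof of Corollary \ref{eqcur}, as Kohn's theorem requires a smooth boundary and yields vanishing only for the forms-smooth-up-to-the-boundary cohomology $H^{2,q}_\infty(\ol D)$, not for the extendable-current cohomology. Instead I would exhaust $\ol D$ by a decreasing family of smooth bounded pseudoconvex domains $D_j\supset\supset\ol D$ with $\bigcap_j D_j=\ol D$ (available because $D$ is pseudoconvex), solve $\opa$ on each $D_j$, and pass to the limit: an extendable $\opa$-closed $(0,1)$-current on $D$ restricts from a current on some $D_j$, and uniform solvability on the $D_j$ together with a normal-families and duality argument produces an extendable solution on $D$. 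The decisive point — and the reason the method is confined to $n=2$ — is the interior regularity of $\opa$ in bidegree $(0,1)$: the system $\partial u/\partial\bar z_k=f_k$ is elliptic in the unknown function $u$, so in the single relevant degree $q=1=n-1$ one may replace current-valued data and solutions by smooth ones, exactly as in the $n=2$ step of Theorem \ref{pcn}; for $q\ge 2$ both this regularity and the restriction of currents to the slice $\{z_1=\dots=z_{n-1}=0\}$ break down. An equivalent route to $\check H^{0,1}(\ol D)=0$ is to combine H\"ormander's $L^2$-solvability on the pseudoconvex domain $D$ (which needs no boundary regularity) with the density of smooth forms in the space of extendable currents and the same $(0,1)$-regularity, and then use the Serre-duality machinery of Section~2 (Proposition \ref{adh} and Theorem \ref{serre}) to upgrade closed range to vanishing.
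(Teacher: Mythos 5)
Your reduction of the corollary to the single statement ``$\wt\Omega$, $D$ pseudoconvex $\Rightarrow$ $\check H^{2,1}(\ol D)=0$'' via Corollary \ref{caract} with $n=2$, and your proof of the necessity direction via Theorem \ref{pcn}, match the paper exactly. The gap is in the sufficiency direction. Your main route rests on the claim that a bounded pseudoconvex $D$ with Lipschitz boundary admits a decreasing family of smooth pseudoconvex domains $D_j\supset\supset\ol D$ with $\bigcap_j D_j=\ol D$. This is false: the Diederich--Fornaess worm domains are smoothly bounded pseudoconvex domains in $\cb^2$ whose closures have no Stein (hence no pseudoconvex) neighborhood basis, and these are precisely the examples invoked in the remark immediately following this corollary (Trapani's result that the classical $H^{0,1}(\Omega)$ fails to be Hausdorff for the annulus around a worm domain shows how delicate this direction is). So the exterior-approximation argument cannot even get started for the domains that make the statement interesting.

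Your fallback route is circular. By Theorem \ref{duality}(i) (equivalently Corollary \ref{sepdual}), the vanishing $\check H^{2,1}(\ol D)=0$ is \emph{equivalent}, via Serre duality, to the Hausdorff property of $H^{0,1}_\infty(X\setminus D)$ --- which is the very statement you are trying to establish (transported from $\wt\Omega$ to a larger Stein $X$ by Corollary \ref{invariance}). H\"ormander's $L^2$ theory on $D$ handles $L^2$ data, but an extendable $(2,1)$-current on $D$ need not be $L^2$, and ``density of smooth forms plus interior regularity'' does not by itself transfer solvability to the limit space without a uniform estimate, i.e.\ without the closed-range property you are assuming. The paper does not attempt either of your arguments: it quotes Theorem 5 of \cite{ChaShdual}, a nontrivial external result of Chakrabarti and Shaw which establishes exactly the Hausdorff property of $H^{0,1}_\infty(\wt\Omega\setminus D)$ for pseudoconvex $\wt\Omega$ and pseudoconvex Lipschitz $D$ in $\cb^2$. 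Unless you supply a proof of that input, the ``if'' half of the corollary remains unproven.
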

\begin{proof}
The necessary condition is a direct consequence of Corollary \ref{caract} and  Theorem \ref{pcn}. The sufficient condition follows from Theorem 5 in \cite{ChaShdual}.
\end{proof}

\begin{rem} One can  only characterize  pseudoconvexity for such  domain  $\Omega$ with holes  by using  the right cohomology groups.  In an earlier paper  by  Trapani  \cite{Tr2},  he
proved that if $\Omega$ is the annulus between a pseudoconvex domain and   some
  Diederich-Fornaess worm  domain $D\Subset \C^2$   with smooth boundary,   the classical  Dolbeault cohomology group  $H^{0,1}(\Omega)$ is not
Hausdorff.  Thus if  we replace $H^{0,1}_\infty(\wt\Omega\setminus D)$ by $H^{0,1}(\wt\Omega\setminus \ol D)$,  Corollary  \ref{eqsmooth} does not hold.
\end{rem}

\section{Characterization of pseudoconvexity  by $L^2$ and $W^1$ Dolbeault cohomology}\label{sec:chaL2}

Let $X$ be a Stein  manifold of dimension $n\ge 2$, equipped with a hermitian metric. Let
$\Omega$ be a bounded domain in $X$. Let $L^2_{p,
	q}(\Omega)$ be the space of $(p, q)$-forms with $L^2$-coefficients.
Let $\dbar_{p, q}\colon L^2_{p, q}(\Omega)\to L^2_{p, q+1}(\Omega)$ be
the densely defined closed operator such that its domain consists of
all $f\in L^2_{p, q}(\Omega)$ such that $\dbar_{p, q}f$, defined in the
sense of distribution, is in $L^2_{p, q+1}(\Omega)$. Let $\dbarstar_{p, q}$ be its Hilbert space
adjoint. We drop the subscript  ${p.q}$ when there is no danger of confusion.

 Let
  $\dbar_c:L^2_{p,q}(\Omega)\to L^2_{p,q+1}(\Omega)$  be the minimal (strong) closure of $\dbar$. By this we mean that $f\in \text{Dom}(\dbar_c)$ if and only if that there exists a sequence of smooth forms  $f_\nu$ in $C^\infty_{p,q}(\Omega)$ compactly supported in $\Omega$ such that  $f_\nu\to f$ and $\dbar f_\nu\to \dbar f$ in $L^2$.
 Let $\vartheta=\dbar_c^*$ be the dual of $\dbar_c$. Then $\vartheta$ is equal
 to the maximal (weak)  $L^2$ closure of the operator  $\vartheta:L^2_{p,q}({\Omega})\to L^2_{p,q-1}({\Omega}).$
 We also define an operator $\dbar_{\tilde c}:L^2_{p,q}(\Omega)\to L^2_{p,q+1}(\Omega)$ to be the closure    of $\dbar$ such that $f$ is in the domain of $\dbar_{\tilde c}$ if and only
 if we extend $f$ to be zero outside $\overline \Omega$, there exists a $L^2_{p,q+1}(X)$ form $g$ supported in $\overline \Omega$ such that
 $\dbar f=g$ in $X$. The operator $\dbar_{\tilde c}$ corresponds to solving $\dbar$ with prescribed support on $\overline \Omega$ in the $L^2$ sense.

 We denote the   cohomology groups in $L^2_{p,q}(\Omega)$  with respect to $\dbar$ and   $\dbar_c$ by $H^{p,q}_{L^2}(\Omega)$ and   $H^{p,q}_{c,L^2}(\Omega)$. The cohomology
 group  for  $\dbar_{\tilde c}$  is denoted by $H^{p,q}_{\tilde{c}, L^2} (\Omega)$.  Note that
     $$H^{p,q}_{\tilde{c}, L^2} (\Omega)=H^{p,q}_{\overline\Omega,L^2}(X).$$

 Suppose that    $\Omega$ is a bounded   domain in $X$ with Lipschitz boundary.  Then  the weak and strong maximal (or minimal) $L^2$ extensions are the same by Friedrichs' lemma (see \cite{Hormander65}, \cite{ChenShaw01}).   When $\Omega$ has Lipschitz boundary, we have $\dbar_c=\dbar_{\tilde c}$ and
\begin{equation} \label{eq:weak}H^{p,q}_{c,L^2}(\Omega)=H^{p,q}_{\overline\Omega,L^2}(X).\end{equation}
(For a proof of this fact, see, e.g.,  Lemma 2.4 in \cite{LaShdualiteL2}).

The  cohomology groups for the $\bar{\partial}$-complexes on forms with   $W^1$   or $W^1_{\loc}$ coefficients are defined similarly and  denoted by
 $H^{p,q}_{W^1}(\Omega)$ and $H^{p,q}_{W^1_{loc}}(\Omega)$ respectively.   We will use $W^1_{loc}(X\setminus \Omega)$ to denote the space of functions with $W^1$ coefficients
 on compact subsets of $X\setminus \Omega$ and
  $H^{p,q}_{W^1_{loc}}(X\setminus \Omega)$ to denote  the Dolbeault cohomology groups for forms with coefficients in $W^1_{loc}(X\setminus \Omega)$.  Thus  the space  $H^{p,q}_{W^1_{loc}}(X\setminus \Omega)$ is different from the space of  $H^{p,q}_{W^1_{loc}}(X\setminus \ol \Omega)$, which is isomorphic to the usual Dolbeault cohomology group $H^{p,q}(X\setminus \ol \Omega)$.
 The dual space of $W^1_{loc}(X\setminus \Omega)$ is denoted by $W^{-1}(X\setminus \Omega)$. The dual complex  is denoted by $\dbar_c:  W^{-1}_{p,q-1}  (X\setminus \Omega)\to
 W^{-1}_{p,q} (X\setminus \Omega)$    and its corresponding cohomology groups are denoted by $H^{p,q}_{c,W^{-1}}(X\setminus \Omega)$.
Again, when the boundary of $\Omega$ is Lipschitz, this corresponds to solving $\dbar$ in $W^{-1}(X)$ spaces with compact support  in $X\setminus \Omega$.

We first establish the following   version of the Hartogs phenomenon.   Without loss of generality, we will deal with only $(0, q)$-forms.

\begin{lem}\label{lm:hartogs1} Let $\widetilde{\Omega}$ be a domain in a Stein manifold $X$ equipped with a hermitian metric and let $K$ be a compact subset of $\widetilde{\Omega}$.  Let $\Omega=\widetilde{\Omega}\setminus K$. Then
		  $\dbar_q^{\widetilde{\Omega}}$ has closed range provided $\dbar_q^{\Omega}$ has closed range.
		 \end{lem}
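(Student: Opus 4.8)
The plan is to prove directly that $\range\dbar_q^{\wto}$ is closed by showing that every element of its closure lies in the range. So let $f\in L^2_{0,q+1}(\wto)$ be of the form $f=\lim_j \dbar u_j$ with $u_j\in\dom\dbar_q^{\wto}$. Since $\dbar_{q+1}^{\wto}$ is a closed operator, its kernel is closed, so $f$ is $\dbar$-closed. First I would restrict everything to $\Omega=\wto\setminus K$: as $\Omega$ is an open subset of $\wto$, each $u_j|_\Omega$ lies in $\dom\dbar_q^{\Omega}$ with $\dbar(u_j|_\Omega)=(\dbar u_j)|_\Omega$, and restriction is norm-nonincreasing, so $f|_\Omega\in\overline{\range\dbar_q^{\Omega}}$. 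The hypothesis that $\dbar_q^{\Omega}$ has closed range then yields $v\in L^2_{0,q}(\Omega)$ with $\dbar v=f|_\Omega$.

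Next I would glue this partial solution back across $K$ by a cut-off. Choose a smooth $\chi$, compactly supported in $\wto$, with $\chi\equiv 1$ on a neighborhood of $K$, and set $\tilde v=(1-\chi)v$, extended by zero on $K$. Because $\tilde v$ vanishes identically near $K$, its zero-extension lies in $\dom\dbar_q^{\wto}$ (no distributional contribution appears across $K$), and $g:=f-\dbar\tilde v$ is a well-defined $\dbar$-closed $(0,q+1)$-form on $\wto$. A direct computation on $\Omega$, using $\dbar v=f|_\Omega$, gives $g=\chi f+\dbar\chi\wedge v$. Hence $\supp g\subseteq\supp\chi\Subset\wto$: indeed $g=f$ on the neighborhood of $K$ where $\chi\equiv 1$, while $g$ vanishes outside $\supp\chi$. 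Extending by zero, $g$ becomes a compactly supported, $\dbar$-closed $(0,q+1)$-form on $X$.

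The final step uses that $X$ is Stein. Since $H^{0,q+1}(X)=0$ for $q+1\ge 1$ (and $\dbar_n=0$ has trivially closed range when $q+1>n$), Hörmander's $L^2$ existence theorem applied with a plurisubharmonic weight solves $\dbar h=g$ with $h\in L^2_{\loc}(X)$; restricting to the relatively compact $\wto$ gives $h\in L^2_{0,q}(\wto)$ with $\dbar h=g$. Then $f=\dbar\tilde v+g=\dbar(\tilde v+h)$ with $\tilde v+h\in L^2_{0,q}(\wto)$, so $f\in\range\dbar_q^{\wto}$, which proves the range is closed.

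I expect the step requiring most care to be the gluing: one must check that the zero-extension of $(1-\chi)v$ genuinely belongs to $\dom\dbar_q^{\wto}$, which holds precisely because $\tilde v\equiv 0$ near $K$, and that $g=f-\dbar\tilde v$ is supported in a compact subset of $\wto$ rather than leaking out to the outer boundary $b\wto$. Once $g$ is recognized as a compactly supported, $\dbar$-closed form, the Hartogs-type solvability on the ambient Stein manifold and the subsequent restriction are routine.
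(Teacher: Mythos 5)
Your argument is correct and follows essentially the same route as the paper's proof: restrict to $\Omega$, solve there using the closed-range hypothesis, glue with a cut-off $\chi\equiv 1$ near $K$, and solve for the compactly supported $\dbar$-closed remainder on the ambient Stein manifold before restricting back to $\wt\Omega$. The only immaterial difference is that you verify closedness qualitatively (every limit of the range lies in the range), whereas the paper tracks the constants and invokes the uniform-estimate criterion for closed range.
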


\begin{proof}   It suffices to show that there exists a constant $C>0$ such that for any $f\in\dom(\dbar^{\wt{\Omega}}_{q-1})$, one can find $u\in\dom(\dbar^{\wt{\Omega}}_{q-1})$ such that
	\begin{equation}\label{eq:cr1}
	\begin{cases}&\dbar u=\dbar f\qquad \text{on} \ \wt\Omega,
	\\& \|u\|_{\wt{\Omega}}\le C\|\dbar f\|_{\wt{\Omega}}.\end{cases}
	\end{equation}
	(see, e.g., \cite[Appendix 1]{Hormander04}). Evidently, the restriction $f\vert_\Omega$ of $f$ to $\Omega$ is in $\dom(\dbar^\Omega_{q-1})$. Thus under the assumption, there exists a $g\in\dom(\dbar^\Omega_{q-1})$ such that
	\[
	\begin{cases}&\dbar g=\dbar f \qquad  \text{ on } \Omega, \\& \|g\|_\Omega\le C_1\|\dbar f\|_\Omega,\end{cases}
	\]
	where  $C_1$ is  independent of $f$.  Let $\chi$ be a smooth cut-off function such that $0\le\chi\le 1$, $\chi=1$ in a neighborhood of $K$, and $\supp\chi\subset\subset\wt{\Omega}$. Consider $\alpha=\dbar f-\dbar (1-\chi)g=\chi\dbar f+\dbar\chi\wedge g$.  Then $\alpha$ is a $\dbar$-closed form in $L^2_{0, q}(\wt{\Omega})$ and $\supp\alpha\subset\subset\wt{\Omega}$. Extend $\alpha$ to $0$ outside $\wt{\Omega}$ and apply
	H\"{o}rmander's $L^2$-estimates to a bounded pseudoconvex domain $\widehat{\Omega}\supset\supset\wt{\Omega}$, we then obtain $v\in\dom(\dbar^{\widehat{\Omega}}_{q-1})$ such
	that
	\[
	\dbar v=\alpha \ \text{ on } \widehat\Omega, \qquad \|v\|_{\widehat\Omega}\le C_2\|\alpha\|_{\widehat\Omega}
	\]
	for some constant $C_2>0$.  Let $u=v+(1-\chi)g$.  Then $u$ satisfies the desired property~\eqref{eq:cr1}.

	  \end{proof}

\begin{lem}\label{lm:hartogs2}  Let $\wt{\Omega}$ be a bounded domain with Lipschitz boundary  in a Stein manifold $X$ of dimension $n$ such that $X\setminus\overline{\wt{\Omega}}$ is connected. Then $\range(\dbar^{*\wt\Omega}_{n-1})=\kernel(\dbar^{*\wt\Omega}_{n-2})$ where $\range$ and $\kernel$ denote   the range and the null spaces of the relevant operators.
\end{lem}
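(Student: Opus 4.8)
The plan is to convert the statement about the adjoint operators into a vanishing statement for an $L^2$-Dolbeault cohomology group with prescribed support, and then to prove that vanishing by combining H\"ormander's solvability on the Stein manifold $X$ with the Hartogs extension phenomenon. First I would introduce the conjugate-linear Hodge-star isometry $\sigma\colon L^2_{0,q}(\wt\Omega)\to L^2_{n,n-q}(\wt\Omega)$. Since $\sigma$ preserves compact support and intertwines (up to sign) the formal adjoint $\vartheta$ of $\dbar$ on $(0,\bullet)$-forms with $\dbar$ on $(n,\bullet)$-forms, and since the Hilbert-space adjoint $\dbarstar$ of the maximal operator $\dbar$ used here is exactly the minimal realization of $\vartheta$, the map $\sigma$ carries $\dbarstar$ to the minimal operator $\dbar_c$ acting on $(n,\bullet)$-forms. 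Under this identification the segment $L^2_{0,n}\xrightarrow{\dbarstar_{n-1}}L^2_{0,n-1}\xrightarrow{\dbarstar_{n-2}}L^2_{0,n-2}$ corresponds to $L^2_{n,0}\xrightarrow{\dbar_c}L^2_{n,1}\xrightarrow{\dbar_c}L^2_{n,2}$, so the identity $\range(\dbar^{*\wt\Omega}_{n-1})=\kernel(\dbar^{*\wt\Omega}_{n-2})$ is equivalent to $H^{n,1}_{c,L^2}(\wt\Omega)=0$. Because $\wt\Omega$ has Lipschitz boundary, \eqref{eq:weak} gives $H^{n,1}_{c,L^2}(\wt\Omega)=H^{n,1}_{\overline{\wt\Omega},L^2}(X)$, so it suffices to solve $\dbar$ with prescribed support in $\overline{\wt\Omega}$ in bidegree $(n,1)$.

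The inclusion $\range(\dbarstar_{n-1})\subseteq\kernel(\dbarstar_{n-2})$ is automatic from $\dbarstar_{n-2}\dbarstar_{n-1}=0$, so the content is the reverse inclusion. For that I would start from a $\dbar$-closed $f\in L^2_{n,1}(X)$ with $\supp f\subseteq\overline{\wt\Omega}$, namely the extension by zero of a representative. Since $X$ is Stein, H\"ormander's $L^2$-estimates with a plurisubharmonic weight $e^{-\varphi}$ produce $u\in L^2_{n,0}(X,e^{-\varphi})$ with $\dbar u=f$ on $X$. On the open set $X\setminus\overline{\wt\Omega}$ we have $\dbar u=0$, so $u$ is there a holomorphic section of the canonical bundle $K_X$. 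At this point the hypotheses $n\ge 2$ and $X\setminus\overline{\wt\Omega}$ connected are precisely what is needed to invoke the Hartogs extension phenomenon: $u|_{X\setminus\overline{\wt\Omega}}$ extends to a global holomorphic $(n,0)$-form $\tilde u$ on $X$. Setting $w=u-\tilde u$ I obtain $\dbar w=f$ while $w=0$ on $X\setminus\overline{\wt\Omega}$; since $\tilde u$ is smooth and $u$ is $L^2$ near the compact set $\overline{\wt\Omega}$, the form $w$ lies in $L^2_{n,0}(X)$ with $\supp w\subseteq\overline{\wt\Omega}$. Thus $f=\dbar_{\tilde c}w$, which proves $H^{n,1}_{\overline{\wt\Omega},L^2}(X)=0$.

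I expect the main obstacle to be the careful bookkeeping of the two reductions rather than any single hard estimate. On the functional-analytic side I must verify that the Hodge-star identification genuinely matches $\dbarstar$ (the adjoint of the maximal operator) with $\dbar_c$ (the minimal operator); this is exactly where the coincidence of weak and strong extensions under the Lipschitz hypothesis, and hence \eqref{eq:weak}, enters. On the complex-analytic side the delicate point is the extension-and-subtraction step: I need the global H\"ormander solution to be genuinely $L^2$ in a neighbourhood of $\overline{\wt\Omega}$ so that $w$ remains $L^2$ after subtracting $\tilde u$, and I must use the connectedness of $X\setminus\overline{\wt\Omega}$ (so that it forms a single unbounded end) to justify the holomorphic extension of $u$ across the hole. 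Once these are in place, the explicit construction of $w$ for every $f\in\kernel(\dbarstar_{n-2})$ yields the exact equality $\range=\kernel$, not merely equality up to closure, which is what the lemma asserts.
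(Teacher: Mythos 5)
Your proposal is correct and follows essentially the same route as the paper: apply the Hodge star to identify $\dbarstar$ with the minimal operator $\dbar_c$ on $(n,\bullet)$-forms, reduce the claim to $H^{n,1}_{c,L^2}(\wt\Omega)=H^{n,1}_{\overline{\wt\Omega},L^2}(X)=0$ via the Lipschitz hypothesis and \eqref{eq:weak}, and then use that $X$ is Stein with $X\setminus\overline{\wt\Omega}$ connected to solve $\dbar$ with prescribed support. The only difference is cosmetic: the paper cites the vanishing $H^{n,1}_{\overline{\wt\Omega},L^2}(X)=H^{n,1}_{c,L^2}(X)=\{0\}$ as known, while you prove it directly by the H\"ormander-plus-Hartogs extension-and-subtraction argument.
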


\begin{proof} Let $f\in\kernel(\dbar^{*\wt\Omega}_{n-2})$. Let $f^0$ be the extension of $f$ to $X$
	such that $f^0=0$ on $X\setminus\wt\Omega$.  Then $f^0$ is compactly supported and
	$\vartheta f^0=0$ in the sense of distribution on $X$ where $\vartheta$ denotes the formal adjoint of $\dbar$.  Let $*$ be the Hodge $*$-operator given by $\langle u,\ v\rangle dV=u\wedge *v$ where $dV$ is the volume form. Then $\vartheta=-*\bar{\partial}*$ (see, {\it e.g.}, \cite[\S 9.1]{ChenShaw01}). Thus $\dbar^{X}_{n, 1}(*f^0)=0$.

	Since $\wt\Omega$ has Lipschitz boundary,  we have from ( \ref{eq:weak}) that
$$H^{n, 1}_{c, L^2}(\wt\Omega)=H^{n,1}_{\ol{\wt\Omega},L^2}(X)=H^{n, 1}_{c, L^2}(X)=\{0\}.$$
Note that in the last two identities we use the assumptions that  
$X$ is Stein and $X\setminus\overline{\wt{\Omega}}$ is connected.
Thus there exists $u\in \dom(\dbar^{\wt\Omega}_c)$ such that $\dbar_c^{\wt\Omega} u=*f$ in $X$.
Since $*u$ is in the domain of $\dbar^{*\wt\Omega} $, we have 	  $\dbar^{*\wt\Omega} (*{u})=f$ on $\wt\Omega$. The lemma is proved.
\end{proof}

Let $\Omega$ be a relatively compact domain in $X$.
We denote by $D$ the union of all the relatively compact connected components of $X\setminus\ol\Omega$ and set $\wt\Omega=\Omega\cup\ol D$.

\begin{thm}\label{L2}
Let $X$, $\Omega$ and $\wt\Omega$ be as above.
Then

(i) for each   $1\leq q\leq n-2$, $H^{0,q}_{L^2}(\Omega)=0$   implies $H^{0,q}_{L^2}(\wt\Omega)=0$;

 (ii)  if   $\wt\Omega$ has Lipschitz boundary,  $H^{0,n-1}_{L^2}(\Omega)$    is Hausdorff implies $H^{0,n-1}_{L^2}(\wt\Omega)=0$.

 \smallskip
 \noindent
  Similarly, we have

 (iii) for each   $1\leq q\leq n-2$, $H^{0,q}_{W^1_{loc}}(\wt\Omega\setminus D)=0$ implies $H^{0,q}(\wt\Omega)=0$;

 (iv)  if $D$ has Lipschitz boundary,    $H^{0,n-1}_{W^1_{loc}}(\wt\Omega\setminus D)$ is Hausdorff implies $H^{0,n-1}(\wt\Omega)=0$.

\end{thm}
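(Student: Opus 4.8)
The plan is to handle the two ``vanishing'' cases (i) and (iii) by a direct Hartogs-type filling argument, and the two ``Hausdorff'' cases (ii) and (iv) by first upgrading the hypothesis to a closed-range statement on $\wt\Omega$ and then killing the resulting harmonic (resp. cohomology) space via the connectedness of $X\setminus\wt\Omega$. Throughout I use that $\ol D$ is a compact subset of $\wt\Omega$ with $\Omega=\wt\Omega\setminus\ol D$, so that Lemma \ref{lm:hartogs1} applies with $K=\ol D$, and that $\wt\Omega\setminus D=\Omega\cup bD$. The four arguments run parallel to Theorem \ref{smooth}, with smoothness up to the boundary replaced by $L^2$, resp. $W^1_{loc}$, regularity.

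For (i) I would start with a $\dbar$-closed $f\in L^2_{0,q}(\wt\Omega)$, restrict it to $\Omega$, and use $H^{0,q}_{L^2}(\Omega)=0$ to get $g\in L^2_{0,q-1}(\Omega)$ with $\dbar g=f$ on $\Omega$. Choosing a cut-off $\chi$ with $\chi\equiv 1$ near $\ol D$ and $\supp\chi\subset\subset\wt\Omega$, the form $\alpha=f-\dbar\big((1-\chi)g\big)=\chi f+\dbar\chi\wedge g$ is $\dbar$-closed, lies in $L^2_{0,q}(\wt\Omega)$, and is compactly supported in $\wt\Omega$; crucially, $(1-\chi)g$ and $\dbar\chi\wedge g$ involve $g$ only on a compact subset of $\Omega$, so no extension of $g$ across $bD$ is required. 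Extending $\alpha$ by zero to a pseudoconvex $\widehat\Omega\supset\supset\wt\Omega$ and applying H\"ormander's $L^2$ estimates yields $v$ with $\dbar v=\alpha$, and $u=v+(1-\chi)g$ solves $\dbar u=f$ in $L^2_{0,q-1}(\wt\Omega)$. Part (iii) is identical in spirit: solve on $\wt\Omega\setminus D$ using the $W^1_{loc}$-vanishing, form the same $\alpha$ (now $\dbar$-closed, $L^2_{loc}$, compactly supported), extend it by zero to the Stein manifold $X$ and solve $\dbar v=\alpha$ there since $H^{0,q}(X)=0$; then $u=v+(1-\chi)g\in L^2_{loc}$ represents $0$ in $H^{0,q}(\wt\Omega)$, which is computed by the $L^2_{loc}$ resolution. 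Because the cut-off confines every use of $g$ to a compact subset of $\Omega$, no boundary regularity of $D$ is needed, consistent with the absence of a Lipschitz hypothesis in (iii).

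For (ii) I would first observe that $H^{0,n-1}_{L^2}(\Omega)$ Hausdorff means $\dbar^{\Omega}_{n-2}$ has closed range; Lemma \ref{lm:hartogs1} applied with $q=n-2$ then gives closed range for $\dbar^{\wt\Omega}_{n-2}$, i.e. $H^{0,n-1}_{L^2}(\wt\Omega)$ is Hausdorff. It remains to kill the harmonic space $\mathcal H=\kernel(\dbar^{\wt\Omega}_{n-1})\cap\kernel(\dbar^{*\wt\Omega}_{n-2})$. Given $h\in\mathcal H$, the identity $\kernel(\dbar^{*\wt\Omega}_{n-2})=\range(\dbar^{*\wt\Omega}_{n-1})$ from Lemma \ref{lm:hartogs2} (which uses $\wt\Omega$ Lipschitz and $X\setminus\overline{\wt\Omega}$ connected) writes $h=\dbar^{*\wt\Omega}_{n-1}w$, whence $\|h\|^2=\langle\dbar^{\wt\Omega}_{n-1}h,w\rangle=0$. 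Thus $\mathcal H=0$, and together with closed range this yields $H^{0,n-1}_{L^2}(\wt\Omega)=0$.

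For (iv) I would mirror Theorem \ref{smooth}(ii). Take a smooth $\dbar$-closed $(0,n-1)$-form $f$ on $\wt\Omega$ orthogonal to every $\dbar$-closed $(n,1)$-current with compact support in $\wt\Omega$. Restricting the pairing to the dual complex of the $W^1_{loc}$-complex on $\wt\Omega\setminus D$ and invoking Proposition \ref{adh} with the Hausdorff hypothesis produces $g\in W^1_{loc}$ with $\dbar g=f$ on $\Omega$; running the cut-off/Steinness construction of (iii) shows $f\in\im\dbar$, so by Proposition \ref{adh} on $\wt\Omega$ the range of $\dbar$ is closed, i.e. $H^{0,n-1}(\wt\Omega)$ is Hausdorff, and since $X\setminus\wt\Omega$ is connected Theorem \ref{sep}(i) forces $H^{0,n-1}(\wt\Omega)=0$. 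I expect the main obstacle to be precisely this duality bookkeeping in (iv): identifying the dual of the $W^1_{loc}$-complex on the non-open set $\wt\Omega\setminus D$ with genuine compactly supported $\dbar$-closed $W^{-1}$-currents inside $\wt\Omega$, and verifying that the orthogonality transfers correctly. This is exactly the step that fails without the Lipschitz regularity of $bD$, which is why (iv), unlike (iii), carries that hypothesis.
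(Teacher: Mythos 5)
Your proposal is correct and takes essentially the same route as the paper: (i) and (iii) by the cut-off Hartogs argument of Lemma~\ref{lm:hartogs1} followed by the Dolbeault isomorphism, (ii) by combining Lemma~\ref{lm:hartogs1} with Lemma~\ref{lm:hartogs2} (your harmonic-space computation is just a rephrasing of the paper's chain of orthogonal complements in \eqref{eq:4.2}), and (iv) by mirroring Theorem~\ref{smooth}(ii) and concluding with Theorem~\ref{sep}. Your observation that the cut-off confines all use of $g$ to a compact subset of $\Omega$, so that no Lipschitz regularity of $bD$ is needed in (i) and (iii), correctly explains why the paper cites Lemma~\ref{lm:hartogs1} rather than the extension argument of Theorem~\ref{smooth}(i) in the $L^2$ setting.
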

\begin{proof} The proof of (i) exactly the same  as in Theorem \ref{smooth}  or  Lemma \ref{lm:hartogs1}.

The proof of (ii) follows from  the $L^2$ Serre duality   (see \cite{LaShdualiteL2} and \cite{ChaShdual}).
We provide a simple proof here for the benefit of the reader.

 If $\range(\dbar^{\wt\Omega}_{n-2})$ is closed,
	then $H^{0, n-1}_{L^2}(\wt\Omega)$ is trivial.
  This is a direct consequence of Lemma \ref{lm:hartogs2}	 since
\begin{equation} \label{eq:4.2}
\range(\dbar^{\wt\Omega}_{n-2})=\kernel(\dbar^{*\wt\Omega}_{n-2})^\perp
=\range(\dbar^{*\wt\Omega}_{n-1})^\perp=\kernel(\dbar^{\wt\Omega}_{n-1}).	
\end{equation}
Note that in the first equality, we use the fact that $\range(\dbar^{\wt\Omega}_{n-2})$ is closed.

   The proof of (iii) is analogous to the proof of (i), using interior regularity we can choose $g$ with  $W^1$ coefficients on a neighborhood of the support of $\opa \chi$, if $f\in (W^1_{loc})_{0,q}(\wt\Omega)$. We get $H^{0,q}_{W^1_{loc}}(\wt\Omega)=0$ and by the Dolbeault isomorphism $H^{0,q}(\wt\Omega)=0$.
The proof of (iv) is similar to that of (ii) in Theorem \ref{smooth}.
\end{proof}

As in the smooth case, we have:

\begin{cor}\label{pcW1}
Let $n\geq 2$ and $D\subset\subset\wt\Omega$ be two relatively compact open subsets of $\cb^n$ such that both $\cb^n\setminus\wt\Omega$ and $\wt\Omega\setminus D$ are connected  and $D$ has Lipschitz boundary. Assume $H^{0,q}_{W^1_{loc}}(\wt\Omega\setminus D)=0$, if  $1\leq q\leq n-2$, and $H^{0,n-1}_{W^1_{loc}}(\wt\Omega\setminus D)$ is Hausdorff, then $\wt\Omega$ is pseudoconvex.
\end{cor}

From Theorem \ref{L2}, we deduce:

\begin{cor}\label{pcL2}
Let $n\geq 2$ and $D\subset\subset\wt\Omega$ be two relatively compact open subsets of $\cb^n$ such that both $\cb^n\setminus\wt\Omega$ and $\wt\Omega\setminus D$ are connected, and   $\wt\Omega$ has Lipschitz boundary.
Assume $H^{0,q}_{L^2}(\wt\Omega\setminus D)=0$, if  $1\leq q\leq n-2$, and $H^{0,n-1}_{L^2}(\wt\Omega\setminus D)$ is Hausdorff, then $\wt\Omega$ is pseudoconvex.
\end{cor}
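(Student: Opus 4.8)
The plan is to deduce this from Theorem~\ref{L2} together with the converse of H\"ormander's $L^2$-existence theorem, mirroring exactly the arguments that produced Corollary~\ref{pcsmooth} in the smooth category and Corollary~\ref{pcW1} in the $W^1$ category. The first point to settle is that the hypotheses, stated on $\wt\Omega\setminus D$, are in fact hypotheses on the open set $\Omega=\wt\Omega\setminus\ol D$: since $L^2$-forms are determined only up to sets of Lebesgue measure zero, one has $L^2_{0,q}(\wt\Omega\setminus D)=L^2_{0,q}(\Omega)$ and hence $H^{0,q}_{L^2}(\wt\Omega\setminus D)=H^{0,q}_{L^2}(\Omega)$ for every $q$. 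Moreover, taking $D$ to be the union of the relatively compact connected components of $\cb^n\setminus\ol\Omega$ and $\wt\Omega=\Omega\cup\ol D$ places us precisely in the framework in which Theorem~\ref{L2} is stated, so that theorem is directly applicable.

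With this identification in hand, I would apply Theorem~\ref{L2} termwise. Part~(i) converts the assumption $H^{0,q}_{L^2}(\Omega)=0$ for $1\le q\le n-2$ into $H^{0,q}_{L^2}(\wt\Omega)=0$ over the same range. For the top degree I would invoke part~(ii): because $\wt\Omega$ is assumed to have Lipschitz boundary (and $\cb^n\setminus\wt\Omega$ is connected, which is what underlies the proof of part~(ii) through Lemma~\ref{lm:hartogs2}), the hypothesis that $H^{0,n-1}_{L^2}(\Omega)$ is Hausdorff yields $H^{0,n-1}_{L^2}(\wt\Omega)=0$. Combining the two conclusions gives $H^{0,q}_{L^2}(\wt\Omega)=0$ for all $1\le q\le n-1$.

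It then remains to pass from the vanishing of the full range of $L^2$-Dolbeault cohomology groups to pseudoconvexity of $\wt\Omega$. Here I would appeal to the converse of H\"ormander's theorem, valid for a bounded domain in $\cb^n$ whose interior of its closure coincides with the domain itself (see \cite{Fu05, Fu10}); the Lipschitz regularity of $b\wt\Omega$ guarantees that the interior of $\ol{\wt\Omega}$ equals $\wt\Omega$, so this hypothesis is met. Since $H^{0,q}_{L^2}(\wt\Omega)=0$ for all $1\le q\le n-1$, and this is exactly the range that characterizes pseudoconvexity, the converse theorem forces $\wt\Omega$ to be pseudoconvex, which is the desired conclusion.

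The genuine substance of the argument has been front-loaded into Theorem~\ref{L2}, whose part~(ii) rests on the $L^2$ Serre duality embodied in Lemma~\ref{lm:hartogs2} together with the Hartogs-type propagation of closed range in Lemma~\ref{lm:hartogs1}. Consequently the present deduction is essentially formal, and the only steps requiring care are the two bookkeeping points flagged above: the measure-zero identification $H^{0,q}_{L^2}(\wt\Omega\setminus D)=H^{0,q}_{L^2}(\Omega)$, and the verification that Lipschitz boundary supplies the regularity hypothesis (interior of the closure equal to the domain) needed to invoke the converse of H\"ormander's theorem. The one nontrivial external input---that vanishing $L^2$-cohomology implies pseudoconvexity---is cited rather than reproved, and is the place where it matters that $\wt\Omega$ is a domain in $\cb^n$ with sufficiently regular boundary rather than an arbitrary open set.
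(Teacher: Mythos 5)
Your proposal is correct and follows essentially the same route the paper intends: the paper's own ``proof'' of Corollary~\ref{pcL2} simply defers to Theorem~\ref{pcnL2}, i.e., one applies Theorem~\ref{L2}(i)--(ii) to get $H^{0,q}_{L^2}(\wt\Omega)=0$ for $1\le q\le n-1$ and then invokes the $L^2$ converse of H\"ormander's theorem (Theorem~\ref{pcnL2} with $s=0$, $p=2$), using that Lipschitz boundary gives interior$(\ol{\wt\Omega})=\wt\Omega$. Your two bookkeeping remarks (identifying $H^{0,q}_{L^2}(\wt\Omega\setminus D)$ with $H^{0,q}_{L^2}(\wt\Omega\setminus\ol D)$, and the regularity hypothesis for the converse) are exactly the points the paper leaves implicit.
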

\begin{proof}
 We will postpone the proof to the corollary in Theorem \ref{pcnL2}.
\end{proof}

As  in the smooth  case, the vanishing or the Hausdorff property of the Dolbeault cohomology groups of the annulus $\Omega=\wt\Omega\setminus D$ are in fact independent of the larger domain $\wt\Omega$ as soon as it satisfies some cohomological conditions.  The following was proved in \cite{ChLaSh}.

\begin{cor}\label{invarianceL2}
Let $D\subset\subset \wt\Omega_1\subset\subset\wt\Omega_2$ be bounded domains in $\cb^n$ such that $\wt\Omega_2$ is pseudoconvex.
Assume $H^{0,q}_{L^2}(\wt\Omega_1\setminus D)=0$, if  $1\leq q\leq n-2$, and $H^{0,n-1}_{L^2}(\wt\Omega_1\setminus D)$ is Hausdorff, then $H^{0,q}_{L^2}(\wt\Omega_2\setminus D)=0$, if  $1\leq q\leq n-2$, and $H^{0,n-1}_{L^2}(\wt\Omega_2\setminus D)$ is Hausdorff.
\end{cor}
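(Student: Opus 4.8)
The plan is to adapt the cut-off argument of Proposition~\ref{annulus} to the $L^2$ setting. The decisive extra ingredient is that $\wt\Omega_2$ is bounded and pseudoconvex, so by H\"ormander's $L^2$-existence theorem $H^{0,q}_{L^2}(\wt\Omega_2)=0$ for every $q\geq 1$. Write $\Omega_i=\wt\Omega_i\setminus D$, so that $\Omega_1\subset\Omega_2$ share the same hole $D$, while $\Omega_2\setminus\ol{\Omega_1}=\wt\Omega_2\setminus\ol{\wt\Omega_1}$ is the outer region. The guiding principle is that all of the cohomology is created by the common hole $D$: a solution produced near $D$ on the smaller annulus $\Omega_1$ can be grafted, via a cut-off, onto a global solution on the filled-in pseudoconvex domain $\wt\Omega_2$. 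Both the vanishing ($1\le q\le n-2$) and the Hausdorff ($q=n-1$) assertions will follow from one and the same construction, the only difference being how one first produces a solution on $\Omega_1$.

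For the vanishing part, fix $1\leq q\leq n-2$ and take $f\in L^2_{0,q}(\Omega_2)$ with $\dbar f=0$. First I would restrict to $\Omega_1$: since $H^{0,q}_{L^2}(\Omega_1)=0$, there is $u\in\dom(\dbar^{\Omega_1}_{q-1})$ with $\dbar u=f$ on $\Omega_1$. Next choose $\chi\in C^\infty_c(\wt\Omega_1)$ with $\chi\equiv 1$ on a neighborhood of $\ol D$; then $\chi u$, extended by zero, lies in $\dom(\dbar^{\Omega_2}_{q-1})$ because it vanishes near $b\wt\Omega_1$, and $\beta:=f-\dbar(\chi u)=(1-\chi)f-\dbar\chi\wedge u$ is $\dbar$-closed and vanishes in a full neighborhood of $bD$ (there $\chi\equiv 1$, so $1-\chi=0$ and $\dbar\chi=0$). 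Hence $\beta$ extends by zero across $bD$ to a $\dbar$-closed form $\tilde\beta\in L^2_{0,q}(\wt\Omega_2)$. Solving $\dbar v=\tilde\beta$ on $\wt\Omega_2$ by H\"ormander and restricting back gives $f=\dbar(\chi u+v)$ on $\Omega_2$, so $H^{0,q}_{L^2}(\Omega_2)=0$.

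For the Hausdorff part I would use that Hausdorffness of $H^{0,n-1}_{L^2}$ is equivalent to $\range(\dbar_{n-2})$ being closed. Let $g_j=\dbar w_j\to g$ in $L^2_{0,n-1}(\Omega_2)$. Restricting, $g_j\vert_{\Omega_1}=\dbar(w_j\vert_{\Omega_1})\to g\vert_{\Omega_1}$, and since $H^{0,n-1}_{L^2}(\Omega_1)$ is Hausdorff, $\range(\dbar^{\Omega_1}_{n-2})$ is closed, whence $g\vert_{\Omega_1}\in\range(\dbar^{\Omega_1}_{n-2})$; that is, there is $u$ with $\dbar u=g$ on $\Omega_1$. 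From here I would repeat verbatim the cut-off construction of the previous paragraph with $f$ replaced by $g$, forming $\beta=g-\dbar(\chi u)$, extending by zero across $bD$, and solving on the pseudoconvex $\wt\Omega_2$, to conclude $g\in\range(\dbar^{\Omega_2}_{n-2})$. Thus the range is closed and $H^{0,n-1}_{L^2}(\Omega_2)$ is Hausdorff.

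The step requiring the most care will be the $L^2$ domain bookkeeping: one must check that each constructed form genuinely lies in the domain of the maximal $\dbar$-operator on the relevant domain. The cut-off $\chi u$ belongs to $\dom(\dbar^{\Omega_2})$ precisely because $\chi$ is compactly supported inside $\wt\Omega_1$, so no boundary contribution arises at $b\wt\Omega_1$; and the extension of $\beta$ by zero across $bD$ is legitimate only because $\beta$ vanishes in a neighborhood of $bD$. This is exactly what renders the argument insensitive to the regularity of $b\wt\Omega_1$, $b\wt\Omega_2$, and $bD$, and explains why no boundary smoothness hypothesis is needed here. Everything else reduces to standard weak/strong ($L^2$) identifications together with the single appeal to H\"ormander's theorem on $\wt\Omega_2$.
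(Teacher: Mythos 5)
Your proof is correct and follows essentially the same cut-off/grafting argument that the paper uses for the parallel statements (Proposition~\ref{annulus} in the smooth/current setting and Lemma~\ref{lm:hartogs1} in the $L^2$ setting); the paper itself defers the proof of this corollary to \cite{ChLaSh}. The only cosmetic difference is that for the Hausdorff assertion you argue directly with closedness of $\range(\dbar_{n-2})$ in the Hilbert space, which is the natural $L^2$ substitute for the orthogonality/duality formulation used in the proof of Proposition~\ref{annulus}(ii).
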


Note that by  Corollary \ref{pcL2}, if $\Omega_1=\wt\Omega_1\setminus D$ is connected and $\Omega_1$ has Lipschitz boundary, the hypothesis of Corollary \ref{invarianceL2} forces $\wt\Omega_1$ to be pseudoconvex. Note also that if $\Omega_2=\wt\Omega_2\setminus D$ is connected and $\Omega_2$ has Lipschitz boundary, the condition $\wt\Omega_2$ pseudoconvex is necessary for the conclusion of Corollary \ref{invarianceL2} to hold.

  Next we will develop in the $L^2$ and $W^1$ settings what is done in section \ref{s3bis} for  forms smooth up to the boundary and extendable currents.

\begin{thm}\label{dualW1}
Let $X$ be a Stein manifold of complex dimension $n\geq 2$ and $D$ be a relatively compact  open subset of  $X$ with Lipschitz boundary such that $X\setminus D$ is connected. The following assertions are equivalent:

(i) For all $1\leq q\leq n-2$, $H^{0,q}_{W^1_{loc}}(X\setminus D)=0$ and $H^{0,n-1}_{W^1_{loc}}(X\setminus D)$ is Hausdorff;

(ii) For all $2\leq q\leq n-1$, $H^{n,q}_{c,W^{-1}}(X\setminus D)=0$ and $H^{n,n}_{c,W^{-1}}(X\setminus D)$ is Hausdorff;

(iii) For all $2\leq q\leq n-1$, $H^{0,q}_{\ol D,L^2_{loc}}(X)=0$, $H^{0,n}_{\ol D,L^2_{loc}}(X)$ is Hausdorff;

(iv) for all $1\leq q\leq n-1$, $H^{n,q}_{L^2}(D)=0$.
\end{thm}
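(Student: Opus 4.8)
The plan is to transcribe the proof of Corollary~\ref{dualvanish} into the $L^2$ and $W^1$ categories, organizing the four groups into three dualities that together link all of (i)--(iv). Concretely, I would establish the pairwise equivalences (i)$\Leftrightarrow$(iii), (iii)$\Leftrightarrow$(iv), and (i)$\Leftrightarrow$(ii), which chain as (ii)$\Leftrightarrow$(i)$\Leftrightarrow$(iii)$\Leftrightarrow$(iv). The hypotheses that $X$ is Stein (so $H^{0,q}(X)=0$ for $q\ge1$) and that $X\setminus D$ is connected are used exactly where their smooth counterparts were used in Theorem~\ref{vanish} and Theorem~\ref{sep}.

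\emph{The excision step} (i)$\Leftrightarrow$(iii) is the $W^1/L^2$ analogue of Theorem~\ref{vanish} and Propositions~\ref{sep1}--\ref{sep2}. Given a $\dbar$-closed $W^1_{loc}$ form $f$ on $X\setminus D$, I would extend it to $\wt f$ on $X$; then $\dbar\wt f$ is supported in $\ol D$ and, crucially, lies in $L^2_{loc}$, since one differentiation drops $W^1$ to $L^2$. This is precisely why the prescribed-support cohomology in (iii) is taken in $L^2_{loc}$ rather than $W^1$. Because $X$ is Stein, this extension-and-$\dbar$ construction realizes the connecting map of the pair $(X,X\setminus D)$ and shifts degree by one, so (i) in degree $q$ corresponds to (iii) in degree $q+1$; the ranges then match exactly, vanishing for $q\in[1,n-2]$ against $q+1\in[2,n-1]$ and the Hausdorff property at $q=n-1$ against $q+1=n$. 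For the reverse passage one solves $\dbar$ on the Stein manifold $X$ and restricts to $X\setminus D$, upgrading the $L^2$ solution to $W^1_{loc}$ by interior elliptic regularity of $\dbar$ away from $bD$, exactly as in the proof of Theorem~\ref{L2}(iii).

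\emph{The $L^2$ Serre duality on $D$} gives (iii)$\Leftrightarrow$(iv). Since $\ol D$ is compact, $H^{0,q}_{\ol D,L^2_{loc}}(X)=H^{0,q}_{\ol D,L^2}(X)$, and as $D$ has Lipschitz boundary, \eqref{eq:weak} identifies this with $H^{0,q}_{c,L^2}(D)$. Serre duality (Theorem~\ref{serre}, applied to the minimal $\dbar$ on $L^2_{0,\bullet}(D)$ and its dual $\vartheta$) then pairs $H^{0,q}_{c,L^2}(D)$ with $H^{n,n-q}_{L^2}(D)$, matching (iii) in degrees $q\in[2,n-1]$ with (iv) in degrees $1\le n-q\le n-2$, the top Hausdorff condition at $q=n$ corresponding to the automatically Hausdorff space $H^{n,0}_{L^2}(D)$. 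The single remaining term, $H^{n,n-1}_{L^2}(D)=0$, is supplied freely: it is dual to $H^{0,1}_{c,L^2}(D)=H^{0,1}_{\ol D,L^2}(X)$, which vanishes by the Hartogs phenomenon because $X$ is Stein and $X\setminus D$ is connected (the argument of Lemma~\ref{lm:hartogs2}), while the closed range needed to conclude vanishing rather than mere non-Hausdorffness is furnished by that same lemma applied to $D$.

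\emph{The $W^1$--$W^{-1}$ Serre duality} gives (i)$\Leftrightarrow$(ii): the $W^{-1}_c$ complex on $X\setminus D$ is by definition the dual complex of the $W^1_{loc}$ complex, so Theorem~\ref{serre} relates the Hausdorff property and the vanishing of $H^{0,q}_{W^1_{loc}}(X\setminus D)$ to those of its Serre dual in bidegree $(n,\,\cdot\,)$, and the precise index matching of the extreme-degree terms is handled by combining the Hausdorff equivalence with the vanishing dichotomy and the separation theorem, just as in the passage from Corollary~\ref{sepdual} to Corollary~\ref{dualvanish}. I expect the main obstacle to be the regularity interface in the excision step: one must check that extension by zero, multiplication by cut-offs, and solving $\dbar$ on $X$ all interact correctly with the three different function classes ($W^1_{loc}$ on $X\setminus D$, $L^2_{loc}$ with support in $\ol D$, and $L^2$ on $D$), and that the degree-shifting connecting map is a well-defined isomorphism on cohomology. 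The secondary delicate point is verifying that the closed-range (Fr\'echet--Schwartz or dual) hypotheses required to invoke the $L^2$ and $W^{-1}$ Serre dualities are in force, for which Lemmas~\ref{lm:hartogs1} and~\ref{lm:hartogs2} are the essential inputs.
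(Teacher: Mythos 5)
Your proposal follows essentially the same route as the paper's proof: the paper likewise reduces everything to the two Serre dualities, giving (i)$\Leftrightarrow$(ii) for the $W^1_{loc}$ complex on $X\setminus D$ and (iii)$\Leftrightarrow$(iv) for the $L^2$ complex supported in $\ol D$, and then closes the loop by the extension-and-interior-regularity argument (it proves the one-way bridges (i)$\Rightarrow$(iii) and (iv)$\Rightarrow$(ii) rather than your two-way excision (i)$\Leftrightarrow$(iii), but the content is identical). One bookkeeping slip in your sketch of (iii)$\Leftrightarrow$(iv): the Hausdorff condition on $H^{0,n}_{\ol D,L^2}(X)$ is dual to the closedness of $\range(\dbar\colon L^2_{n,0}(D)\to L^2_{n,1}(D))$, i.e.\ to the Hausdorff property of $H^{n,1}_{L^2}(D)$, not to the trivially Hausdorff $H^{n,0}_{L^2}(D)$, and the closed range of $\dbar\colon L^2_{n,n-2}(D)\to L^2_{n,n-1}(D)$ needed for $H^{n,n-1}_{L^2}(D)=0$ comes from the degree-$2$ part of (iii) by adjointness rather than from Lemma~\ref{lm:hartogs2} (which converts closed range into vanishing, and does not furnish it) --- though your key observation, that the extreme term is supplied by the Hartogs-type vanishing of $H^{0,1}_{\ol D,L^2}(X)$, is correct.
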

\begin{proof}
The  Serre duality for the complexes $((W^1_{loc})^{0,\bullet}(X\setminus D),\opa)$, respectively $((L^2_{\ol D})^{n,\bullet}(X),\opa)$, implies the equivalence between (i) and (ii), respectively (iii) and (iv). Thus it is sufficient to prove that (i) implies (iii) and (iv) implies (ii).

Let us prove now that, for any $2\leq q\leq n$, if $H^{0,q-1}_{W^{1}_{loc}}(X\setminus D)$ is Hausdorff, then $H^{0,q}_{\ol D,L^2_{loc}}(X)$ is Hausdorff. Let $f$ be a $\opa$-closed $(0,q)$-form on $X$ with $L^2$ coefficients and support contained in $\ol D$ such that for any  $\opa$-closed $L^2$-form $u$ of bidegree $(n,n-q)$ on $D$, we have $\langle u,f\rangle =0$. Since $H^{0,q}(X)=0$ and by interior regularity, there exists a  form $g$ in $(W^1_{loc})_{0,q-1}(X)$ such that $\opa g=f$ on $X$, in particular $\opa g=0$ on $X\setminus\ol D$.

Let $S$ be a $\opa$-closed $(n,n-q+1)$-current on $X$ with compact support in $X\setminus D$ and coefficients in $W^{-1}(X)$, then, since $H^{n,n-q+1}_c(X)=0$, there exists an $(n,n-q)$-current $U$ with compact support on $X$ such that $\opa U=S$ and in particular $\opa U=0$ on $D$. Moreover $U$ can be chosen with $L^2_{loc}$ coefficients. Thus
$$\langle S,g\rangle =\langle \opa U,g\rangle =\langle U,\opa g\rangle =\langle U,f\rangle =0,$$
by hypothesis on $f$.

Therefore the Hausdorff property of $H^{0,q-1}_{W^1_{loc}}(X\setminus D)$ implies there exists a  $(0,q-2)$-form $h$ on $X\setminus D$ coefficients in $W^1_{loc}(X\setminus D)$ such that $\opa h=g$. Let $\wt h$ be a $W^1_{loc}$-extension of $h$ to $X$ then $u=g-\opa\wt h$ is a $L^2$-form with support in $\ol D$ and
$$\opa u=\opa (g-\opa\wt h)=\opa g=f.$$

In the same way we can prove that, for any $2\leq q\leq n-1$, if $H^{0,q-1}_{W^1_{loc}}(X\setminus D)=0$, then
$H^{0,q}_{\ol D,L^2_{loc}}(X)=0$.

Assume now that for some $1\leq q\leq n-2$, $H^{n,q}_{L^2}(D)=0$. Let $f$ be a $\opa$-closed form in $W^{-1}_{n,q+1}(X)$ with compact support in $X\setminus D$. Since $X$ is a Stein manifold, there exists a $(n,q)$-form $g$ with compact support in $X$ such that $\opa g=f$ and by interior regularity $g$ can be chosen with $L^2$ coefficients on $D$. Since the support of $f$ is contained in $X\setminus D$, $g$ is $\opa$-closed in $D$ and as $H^{n,q}_{L^2}( D)=0$, we get $g=\opa h$ for some $(n,q-1)$-form $h$ in $L^2_{n,q-1}(D)$. Let $\wt h$ be the extension of $h$ by $0$ outside of $D$. Then $g-\opa\wt h$ vanishes on $D$ and satisfies $\opa(g-\opa\wt h)=f$. This shows $H^{n,q+1}_{c,W^{-1}}(X\setminus D)=0$.

To end the proof, assume $H^{n,n-1}_{L^2}(D)=0$.  Let $f$ be a $\opa$-closed form in $W^{-1}_{n,n}(X)$ with compact support in $X\setminus D$ orthogonal to the $\opa$-closed functions, which are $W^1$ in $X\setminus D$ and in particular to the holomorphic functions in $X$. The Hausdorff property of $H_{c,W^{-1}}^{n,n}(X)$ implies that there exists an $(n,n-1)$-form $g$ with compact support in $X$ such that $\opa g=f$ and by interior regularity, $g$ can be chosen with $L^2$ coefficients on $D$. As for $1\leq q \leq n-2$, we can conclude that $f=\opa u$, where $u$ is in $W^{-1}_{n,n-1}(X)$ with compact support in $X\setminus D$.
\end{proof}

\begin{thm}\label{dualL2}
Let $X$ be a Stein manifold of complex dimension $n\geq 2$ and $D$ be a relatively compact   open subset of  $X$ with Lipschitz boundary such that $X\setminus D$ is connected. The following assertions are equivalent:

(i) For all $1\leq q\leq n-2$, $H^{0,q}_{L^2}(X\setminus D)=0$ and $H^{0,n-1}_{L^2}(X\setminus D)$ is Hausdorff

(ii) For all $2\leq q\leq n-1$, $H^{n,q}_{c,L^2}(X\setminus D)=0$ and $H^{n,n}_{c,L^2}(X\setminus D)$ is Hausdorff

(iii) for all $1\leq q\leq n-1$, $H^{n,q}_{W^1}( D)=0$.
\end{thm}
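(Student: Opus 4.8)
The plan is to follow the scheme of the proof of Theorem \ref{dualW1}, trading the $W^1_{loc}/W^{-1}$ duality for the self-dual $L^2$ pairing and replacing the $L^2$-regularity on $D$ by $W^1$-regularity, the extra derivative being furnished by interior elliptic regularity for $\dbar$. First I would settle (i)$\Leftrightarrow$(ii) by the $L^2$ Serre duality of \cite{LaShdualiteL2,ChaShdual} (the Hilbert-space analogue of Theorem \ref{serre}) applied to the complex $(L^2_{0,\bullet}(X\setminus D),\dbar)$, whose dual complex is $(L^2_{n,\bullet}(X\setminus D),\dbar_c)$; since $bD$ is Lipschitz, Friedrichs' lemma guarantees that the weak and strong closures agree, so $\dbar_c=\dbar_{\tilde c}$ and \eqref{eq:weak} holds. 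Under this duality $H^{0,q}_{L^2}(X\setminus D)$ is paired with $H^{n,n-q}_{c,L^2}(X\setminus D)$, while the Hausdorff statement matches the Hausdorffness of $H^{0,q+1}_{L^2}(X\setminus D)$ with that of $H^{n,n-q}_{c,L^2}(X\setminus D)$. Feeding the ranges of (i) into these two correspondences—and using the vanishing-or-non-Hausdorff statement to upgrade Hausdorffness to genuine vanishing—returns exactly the ranges in (ii), and conversely; the endpoint $q=n-1$ of (i) (resp.\ the top group $H^{n,n}_{c,L^2}$ of (ii)) is precisely the place where only Hausdorffness, not vanishing, can be asserted, which is why these appear as the Hausdorff clauses in the statement.

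Next I would establish (ii)$\Leftrightarrow$(iii) by the bridging argument through the Stein manifold $X$, as in the implication (iv)$\Rightarrow$(ii) of Theorem \ref{dualW1}, now with a degree shift $q\mapsto q+1$. For (iii)$\Rightarrow$(ii): let $1\le q\le n-2$ and let $f$ be a $\dbar_c$-closed $(n,q+1)$-form in $L^2$ supported in $X\setminus D$. Since $X$ is Stein, $H^{n,q+1}_{c,L^2}(X)=0$, so the canonical solution $g=\dbarstar Nf$ on a large bounded pseudoconvex $\widehat\Omega$ containing $\ol D$ in its interior gives a compactly supported $g$ with $\dbar g=f$; because $\ol D$ lies in the interior of $\widehat\Omega$, interior ellipticity of the $\dbar$-Laplacian gains one derivative, so $g\in W^1$ on a neighborhood of $\ol D$, while $\dbar g=f=0$ on $D$. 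The hypothesis $H^{n,q}_{W^1}(D)=0$ then yields $h\in W^1_{n,q-1}(D)$ with $\dbar h=g$ on $D$; extending $h$ to a compactly supported $\widehat h\in W^1_{n,q-1}(X)$ (a Sobolev extension, available since $bD$ is Lipschitz) and setting $u=g-\dbar\widehat h$ produces an $L^2$-form supported in $X\setminus D$ with $\dbar u=f$, whence $H^{n,q+1}_{c,L^2}(X\setminus D)=0$ by \eqref{eq:weak}. The top degree $q=n-1$ is the same, except that $H^{n,n}_{c,L^2}(X)$ is only Hausdorff: one tests $f$ against $L^2$ holomorphic functions, uses the connectedness of $X\setminus D$ (as in Lemma \ref{lm:hartogs2} and Theorem \ref{sep}) to solve $\dbar g=f$ with compact support under this orthogonality, and deduces the Hausdorffness of $H^{n,n}_{c,L^2}(X\setminus D)$. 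The converse (ii)$\Rightarrow$(iii) runs in reverse: from a $\dbar$-closed $\phi\in W^1_{n,q}(D)$ one extends to a compactly supported $\widehat\phi\in W^1$ on $X$; then $\dbar\widehat\phi$ is $L^2$, supported in $X\setminus D$, and represents a class in $H^{n,q+1}_{c,L^2}(X\setminus D)$, which vanishes by (ii) for $q\le n-2$ and lies in the closed range for $q=n-1$ by the Hausdorff clause. Writing $\dbar\widehat\phi=\dbar_c w$ with $w$ supported in $X\setminus D$, the form $\widehat\phi-w$ is $\dbar$-closed on the Stein manifold $X$ and equals $\phi$ on $D$; solving $\dbar\Psi=\widehat\phi-w$ with the canonical solution and invoking the interior one-derivative gain gives $\Psi\in W^1$ near $\ol D$ with $\phi=\dbar\Psi$ on $D$, i.e.\ $H^{n,q}_{W^1}(D)=0$.

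The main obstacle—the only genuinely new point beyond Theorem \ref{dualW1}—is the regularity bookkeeping. One must verify that the single derivative produced by the interior ellipticity of the canonical $\dbar$-solution is exactly what is needed to move between the $L^2$-cohomology outside and the $W^1$-cohomology of $D$, and that $\dbarstar Nf$ is $W^1$ \emph{up to and across} $bD$, which holds precisely because $\ol D$ sits in the interior of $\widehat\Omega$, so no boundary loss occurs at $bD$. One must also confirm that the extension $\widehat h$ of a $W^1(D)$-form is an admissible competitor and that the support and $\dbar_c$-membership of $u=g-\dbar\widehat h$ are legitimate, which is where the Lipschitz regularity of $bD$, Friedrichs' lemma, and the identity \eqref{eq:weak} re-enter. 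Finally, the degree shift $q\mapsto q+1$ between (ii) and (iii) and the fact that the top group can at best be Hausdorff must be tracked carefully, so that the ranges $1\le q\le n-2$, $2\le q\le n-1$, and $1\le q\le n-1$ appearing in (i), (ii), (iii) line up exactly at both endpoints.
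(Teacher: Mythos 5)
Your proposal is correct and takes essentially the same route as the paper: (i)$\Leftrightarrow$(ii) by $L^2$ Serre duality, and (ii)$\Leftrightarrow$(iii) by extending $\dbar$-closed $W^1$-forms from $D$ to $X$, respectively solving $\dbar$ with compact support on the Stein manifold $X$, with interior elliptic regularity supplying the one-derivative gain on $D\Subset X$ and the Lipschitz boundary supplying the Sobolev extension of $W^1(D)$-forms. The only (immaterial) deviation is that for the top-degree equivalence ($H^{n,n}_{c,L^2}(X\setminus D)$ Hausdorff iff $H^{n,n-1}_{W^1}(D)=0$) the paper simply cites Proposition~4.7 of \cite{LaShdualiteL2}, whereas you rerun the corresponding step of Theorem~\ref{dualW1}; there one should obtain the compactly supported solution on $X$ from $H^{n,q}_c(X)=0$ and then correct it near $\ol D$ using the canonical solution on $\widehat\Omega$, since the canonical solution alone, extended by zero past $b\widehat\Omega$, no longer solves the equation on $X$.
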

\begin{proof}
The equivalence between (i) and (ii) is a direct consequence of the Serre duality (see \cite{ChaShdual} or Theorem \ref{serre}).
From Proposition 4.7 in \cite{LaShdualiteL2}, we know that $H^{n,n}_{c,L^2}(X\setminus D)$ is Hausdorff if and only if $H^{n,n-1}_{W^1}( D)=0$.
Let us now prove the equivalence between (ii) and (iii) for the other degrees.

Assume (ii) is satisfied. Let $f\in W^1_{n,q}(D)$, $1\leq q\leq n-2$, be a $\opa$-closed form on $D$ and let $\wt f$ be a $W^1$ extension with compact support of $f$ to $X$. The $(0,q+1)$-form $\opa\wt f$ has $L^2$ coefficients and compact support in $X\setminus D$. By (ii), $H^{n,q+1}_{c,L^2}(X\setminus D)=0$ and therefore there exists a form $g\in L^2_{n,q}(X)$ with compact support in $X\setminus D$ such that $\opa\wt f=\opa g$. So the form $\wt f-g$ is a $\opa$-closed form on $X$ whose restriction to $D$ is equal to $f$. As $X$ is a Stein manifold, $\wt f-g=\opa h$ for some $L^2_{loc}$-form $h$ on $X$. Then we have $f=\opa h$ on $D$ and it follows from interior regularity that we can choose $h$ to belong to $W^1_{n,q-1}(D)$, which proves $H^{n,q}_{W^1}( D)=0$.

Let us prove the converse. Let $f\in L^2_{n,q}(X)$, $2\leq q\leq n-1$, be a $\opa$-closed form with compact support in $X\setminus D$. Since $X$ is a Stein manifold, there exists a $(n,q-1)$-form $g$ with compact support in $X$ such that $\opa g=f$ and by interior regularity $g$  can be chosen with  $W^1$ coefficients on $D$. Since the support of $f$ is contained in $X\setminus D$, $g$ is $\opa$-closed in $D$ and as $H^{n,q-1}_{W^1}( D)=0$, we get $g=\opa h$ for some $(n,q-2)$-form $h$ in $W^1_{n,q-2}(D)$. Let $\wt h$ be a $W^1$ extension of $h$ with compact support in $X$, which exists since $D$ is a relatively compact domain with Lipschitz boundary. Then $g-\opa\wt h$ vanishes on $D$ and satisfies $\opa(g-\opa\wt h)=f$. This shows $H^{n,q}_{c,L^2}(X\setminus D)=0$.
\end{proof}

\begin{cor}\label{caractL2}
Let $D\subset\subset\wt\Omega$ be bounded open subsets of $\cb^n$, $n\geq 2$, such that $\cb^n\setminus\wt\Omega$ is connected. Assume $D$ has  Lipschitz boundary and $\Omega=\wt\Omega\setminus\ol D$ is connected. Consider the assertions:

(i) For all $1\leq q\leq n-2$, $H^{0,q}_{W^1_{loc}}(\wt\Omega\setminus D)=0$ and $H^{0,n-1}_{W^1_{loc}}(\wt\Omega\setminus D)$ is Hausdorff;

(ii) For all $1\leq q\leq n-1$, $H^{n,q}_{L^2}(D)=0$ and $\wt\Omega$ is pseudoconvex.
\medskip

{\parindent=0pt and if $\wt\Omega$ has Lipschitz boundary}

(i') For all $1\leq q\leq n-2$, $H^{0,q}_{L^2}(\Omega)=0$ and $H^{0,n-1}_{L^2}(\Omega)$ is Hausdorff;

(ii') For all $1\leq q\leq n-1$, $H^{n,q}_{W^1}( D)=0$ and $\wt\Omega$ is pseudoconvex.
\medskip

{\parindent=0pt Then the pairs of assertions (i) and (ii) are equivalent, and if moreover $\wt\Omega$ has Lipschitz boundary, (i') and (ii'), are equivalent.}
\end{cor}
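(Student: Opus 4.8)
The plan is to reduce both equivalences to the duality theorems already established over a Stein base, namely Theorem \ref{dualW1} and Theorem \ref{dualL2}, by first upgrading the pseudoconvexity of $\wt\Omega$ to the statement that $\wt\Omega$ is itself a Stein manifold. The key observation is that although $\wt\Omega$ is only assumed to be a bounded domain in $\cb^n$, a pseudoconvex domain in $\cb^n$ is a domain of holomorphy and hence Stein, so once pseudoconvexity of $\wt\Omega$ is in hand one may legitimately apply Theorems \ref{dualW1} and \ref{dualL2} with $X=\wt\Omega$.

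First I would treat the equivalence of (i) and (ii). For the forward implication, I note that since $\Omega=\wt\Omega\setminus\ol D$ is connected and $D$ has Lipschitz boundary, Corollary \ref{pcW1} applies and hypothesis (i) forces $\wt\Omega$ to be pseudoconvex, hence Stein. With $X=\wt\Omega$, the hypotheses of Theorem \ref{dualW1} are met: $D$ is relatively compact with Lipschitz boundary, and $\wt\Omega\setminus D$ is connected (this follows from connectedness of $\Omega$, since the Lipschitz boundary $bD$ consists of limit points of $\Omega$). The equivalence of assertions (i) and (iv) in Theorem \ref{dualW1} then yields $H^{n,q}_{L^2}(D)=0$ for all $1\leq q\leq n-1$, which together with the pseudoconvexity of $\wt\Omega$ is exactly (ii). Conversely, (ii) asserts directly that $\wt\Omega$ is pseudoconvex (hence Stein) and that $H^{n,q}_{L^2}(D)=0$ for all $1\leq q\leq n-1$; applying the reverse implication (iv)$\Rightarrow$(i) of Theorem \ref{dualW1} over $X=\wt\Omega$ recovers the $W^1_{loc}$ conditions in (i).

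The equivalence of (i') and (ii') proceeds identically, replacing Corollary \ref{pcW1} by Corollary \ref{pcL2} and Theorem \ref{dualW1} by Theorem \ref{dualL2}, and using the additional hypothesis that $\wt\Omega$ has Lipschitz boundary (required both by Corollary \ref{pcL2} and by Theorem \ref{dualL2}). Here one should note that the $L^2$-Dolbeault cohomology does not distinguish $\wt\Omega\setminus D$ from $\Omega=\wt\Omega\setminus\ol D$, since the two domains differ only by the set $bD$ of measure zero, so the groups $H^{0,q}_{L^2}(\wt\Omega\setminus D)$ appearing in Theorem \ref{dualL2} coincide with the groups $H^{0,q}_{L^2}(\Omega)$ of (i'). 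With this identification, the equivalence (i)$\Leftrightarrow$(iii) of Theorem \ref{dualL2} applied with $X=\wt\Omega$ interchanges the conditions on $H^{0,q}_{L^2}(\Omega)$ with the vanishing of $H^{n,q}_{W^1}(D)$, yielding (i')$\Leftrightarrow$(ii').

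The main obstacle is conceptual rather than computational: the duality theorems are stated over a Stein ambient manifold, whereas $\wt\Omega$ is a priori only a domain, so the argument hinges entirely on producing the pseudoconvexity (equivalently, the Steinness) of $\wt\Omega$ before the duality machinery can be invoked. In the forward directions this pseudoconvexity is extracted from the cohomological hypotheses via Corollaries \ref{pcW1} and \ref{pcL2}, while in the backward directions it is supplied as part of the hypothesis; the remaining verifications---connectedness of $\wt\Omega\setminus D$ and the measure-zero identification of the $L^2$ groups---are routine.
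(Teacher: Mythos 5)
Your proof is correct and follows essentially the route the paper intends (the corollary is stated without an explicit proof, but, exactly as in the parallel Corollary \ref{caract}, it is meant to follow by first extracting pseudoconvexity of $\wt\Omega$ from Corollaries \ref{pcW1} and \ref{pcL2} and then applying Theorems \ref{dualW1} and \ref{dualL2} with $X=\wt\Omega$, which is Stein once pseudoconvex). Your additional remarks---that $\wt\Omega\setminus D$ is connected because it is squeezed between the connected set $\Omega$ and $\ol\Omega$ when $bD$ is Lipschitz, and that the $L^2$ groups on $\wt\Omega\setminus D$ and on $\Omega$ coincide since $bD$ has measure zero---are precisely the routine verifications needed to line up the hypotheses.
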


\section{Hearing pseudoconvexity with  $L^2$ Dolbeault cohomology}

 It is well known (see e.g. Corollary 4.2.6 and Theorem 4.2.9 in \cite{Hormander90}) that a domain $D$ in $\cb^n$ is pseudoconvex if and only if we have $H^{0,q}(D)=0$ for all $1\leq q\leq n-1$. This result also holds for the $L^2$-cohomology, provided $D$ satisfies interior($\ol D$)$=D$ (see \cite{Fu05} and references therein).
 We will  extend the results to
  the case when the  Dolbeault cohomology groups   with  $W^{s, p}$-forms are finite dimensional  for any given $s\ge 0$ and $p\ge 1$. (Here $W^{s, p}(D)$ is the $L^p$-Sobolev space of order $s$.)

\begin{thm}\label{pcnL2}
Let $D\subset\subset\cb^n$ be a bounded domain such that interior($\ol D$)$=D$. Let $s\ge 0$ and $p\ge 1$. If $H^{0,q}_{W^{s, p}}(D)$ is finite dimensional for all $1\le q\le n-1$, then $D$ is pseudoconvex.
\end{thm}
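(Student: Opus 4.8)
The plan is to adapt the proof of Theorem~\ref{pcn} to the $W^{s,p}$-setting, the key point being that Laufer's finiteness-implies-vanishing argument and the explicit Bochner--Martinelli-type forms $u(k_1,\dots,k_q)$ both survive in these Sobolev spaces. First I would invoke Laufer's argument: the space $W^{s,p}(D)$ is invariant under differentiation and under multiplication by polynomials (polynomials are smooth with bounded derivatives on the bounded set $D$, so multiplication is continuous on $W^{s,p}(D)$), which are exactly the two structural properties Laufer's method requires. Hence, as in Theorem~\ref{pcn}, finite dimensionality of $H^{0,q}_{W^{s,p}}(D)$ forces $H^{0,q}_{W^{s,p}}(D)=0$ for every $1\le q\le n-1$, and we may assume outright that all these groups vanish.

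Next I would argue by contradiction exactly as in Theorem~\ref{pcn}. If $D$ were not pseudoconvex, then using $\text{interior}(\ol D)=D$ there is a strictly larger $\wt D$ to which all holomorphic functions on $D$ extend, and after a translation/rotation one arranges $0\in\wt D\setminus\ol D$ together with the point $z_0$ on the coordinate axis in the same component. The forms $u(k_1,\dots,k_q)$ are smooth on $\cb^n\setminus\{0\}$ and, since $0\notin\ol D$, their coefficients are smooth and bounded with all derivatives bounded on $\ol D$; therefore they lie in $W^{s,p}_{0,q-1}(D)$ for every $s$ and $p$. One then runs the identical finite induction on the hypotheses \textbf{H(q)}, at each stage solving $\opa v(K)=w(K)$ with the vanishing $H^{0,n-q-1}_{W^{s,p}}(D)=0$ to obtain $v(K)\in W^{s,p}_{0,n-q-2}(D)$. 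The construction terminates with a function $F\in W^{s,p}(D)$ satisfying $\opa F=0$ on $D$.

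The one genuine subtlety is the final step: I must conclude that this $\opa$-closed $F\in W^{s,p}(D)$ is holomorphic and extends to $\wt D$, then derive a contradiction from its explicit singular behaviour $F(0,\dots,z_n)=(-1)^{1+n(n-1)/2}/z_n$ near $z_n=0$. Holomorphicity is immediate from interior ellipticity of $\opa$ in bidegree $(0,0)$, since a distributional solution of $\opa F=0$ is automatically smooth; thus $F$ is genuinely holomorphic on $D$ and the extension/contradiction argument is verbatim that of Theorem~\ref{pcn}. The point I expect to require the most care is verifying that multiplication by the coordinate functions $z_l$ and the passage through \eqref{dbar} keep all intermediate forms inside $W^{s,p}$ rather than merely $L^p$ or $W^{s-1,p}$; this is where the boundedness of $D$ and the smoothness of $u(\cdots)$ away from the origin are essential, and it is the analogue for general $(s,p)$ of the interior-regularity remark used for currents in the $n=2$ case of Theorem~\ref{pcn}.
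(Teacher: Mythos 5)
Your proposal has two genuine gaps, and both are precisely the points where the paper's proof departs from that of Theorem~\ref{pcn}. First, the reduction from finite dimensionality to vanishing via Laufer's argument fails in the $W^{s,p}$ setting: that argument requires the coefficient space to be invariant under differentiation (as the paper states explicitly in the proof of Theorem~\ref{pcn}), and $W^{s,p}(D)$ is not --- differentiation lands in $W^{s-1,p}(D)$, and for $s=0$ it does not even preserve $L^p$. Your parenthetical only justifies invariance under polynomial multiplication; the differentiation claim is asserted and is false. The paper avoids this entirely by keeping the finite-dimensionality hypothesis and running a pigeonhole construction: it starts from $M^{n-1}$ initial forms $u_{\alpha,m}(1,\dots,n)$ with $M>\dim H^{0,q}_{W^{s,p}}(D)$ and, at each inductive step, decomposes the current family $\scriptf_q$ into $M$-dimensional subspaces, extracting from each a nonzero $u$ for which the $\dbar$-equation is solvable.

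Second, and more seriously, the final restriction step is not ``verbatim.'' Holomorphicity of $F$ by interior ellipticity is not the issue; the issue is evaluating $F$ on the axis $\{z_1=\dots=z_{n-1}=0\}$. One has $F=\sum_j z_j v_j + (\text{explicit singular term})$ where the $v_j$ are coefficients of forms obtained by solving $\dbar$ in $W^{s,p}$, hence in general merely $L^p$ functions with no pointwise values on a measure-zero set. A single factor of $z_j$ does not force $z_j v_j$ to vanish on the axis (e.g.\ $v_j$ can blow up like a negative power of $z_j$ while remaining in $L^p$), so you cannot conclude $F(0',z_n)=\pm u(n)(0',z_n)$. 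This is exactly why the paper replaces the factor $z_\ell$ in \eqref{dbar} by $z_\ell^m$, using the modified kernels $u_{\alpha,m}$ of \eqref{eq:um}, and proves Lemma~\ref{lm:rest2}: if $G=\sum_{j=1}^{n-1}z_j^m v_j$ is continuous with $v_j\in L^p$ and $m\ge 2(n-1)/p$, then $G$ vanishes on the $z_n$-axis. Your argument as written would only go through when $p$ is large enough that $m=1$ already satisfies this inequality, i.e.\ $p\ge 2(n-1)$; for general $s\ge 0$, $p\ge 1$ the modification of the Laufer forms is essential, not a technicality.
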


We will present a proof using an idea of Laufer \cite{Laufer66} as in the proof of Theorem~\ref{pcn}. The subtle difference is that while it makes sense to restrict a smooth form on a domain to the intersection of the domain with a complex hyperplane, restriction of an $L^p$-form to a complex hyperplane is not well-defined. This difficulty was overcome by appropriately modifying the construction of Laufer so that the factor $z_l$ in \eqref{dbar} is replaced by $z_l^m$ for a positive integer $m$. By choosing $m$ sufficiently large, we are able to make this restriction work. We now provide the detail, following \cite{Fu05}.  The following simple lemma illustrates the idea behind the construction of the forms $u_{\alpha, m}(k_1, k_2, \ldots, k_q)$ given by \eqref{eq:um} below.

\begin{lem}\label{lm:rest2} Let $v_1, \ldots, v_{n-1}\in L^p(D)$, $p\ge 1$, and
	let $m$ be a positive integer. Assume that $G$ is a continuous function on $D$ such that
	\begin{equation}\label{eq:rest2}
	G(z)=\sum_{j=1}^{n-1} z_j^m v_j(z).
	\end{equation}
If $m\ge 2(n-1)/p$,	then $G(0,\ldots, 0, z_n)=0$ for all $(0, \ldots, 0, z_n)\in D$.
\end{lem}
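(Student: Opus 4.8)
The plan is to prove the conclusion pointwise: I fix an arbitrary point $(0,\ldots,0,z_n^0)\in D$ and show $G(0,\ldots,0,z_n^0)=0$ by averaging $G$ over thin polydiscs that collapse onto the complex line $\{z_1=\cdots=z_{n-1}=0\}$. Write $z=(z',z_n)$ with $z'=(z_1,\ldots,z_{n-1})$. Since $D$ is open I can choose $\rho_0,\delta_0>0$ with $\overline{P_{\rho_0,\delta_0}}\subset D$, where
$$
P_{\rho,\delta}=\{|z_j|<\rho,\ j=1,\ldots,n-1\}\times\{|z_n-z_n^0|<\delta\}.
$$
Its volume factors as $|P_{\rho,\delta}|=|B'_\rho|\,|B_\delta|=\pi^n\rho^{2(n-1)}\delta^2$, where $B'_\rho$ is the polydisc of radius $\rho$ in $\cb^{n-1}$ and $B_\delta$ the disc of radius $\delta$ about $z_n^0$ in $\cb$. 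The crucial structural point is that this polydisc is thin \emph{only} in the first $n-1$ directions, which are exactly the directions in which the factors $z_j^m$ of \eqref{eq:rest2} are small.

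The analytic heart is to average \eqref{eq:rest2} over $P_{\rho,\delta}$ and estimate each summand. Bounding $|z_j^m|\le\rho^m$ on $P_{\rho,\delta}$ and applying H\"older's inequality gives
\begin{equation*}
\left|\frac{1}{|P_{\rho,\delta}|}\int_{P_{\rho,\delta}}z_j^m v_j\,dV\right|\le \rho^m\,|P_{\rho,\delta}|^{-1/p}\,\|v_j\|_{L^p(P_{\rho,\delta})}=\pi^{-n/p}\,\rho^{\,m-2(n-1)/p}\,\delta^{-2/p}\,\|v_j\|_{L^p(P_{\rho,\delta})}.
\end{equation*}
Under the hypothesis $m\ge 2(n-1)/p$ the exponent of $\rho$ is nonnegative, so for each fixed $\delta\le\delta_0$ this bound tends to $0$ as $\rho\to0$: when the inequality is strict the factor $\rho^{m-2(n-1)/p}$ already forces decay, while in the borderline case $m=2(n-1)/p$ the factor $\|v_j\|_{L^p(P_{\rho,\delta})}$ supplies it, by absolute continuity of the integral of $|v_j|^p$ as $|P_{\rho,\delta}|\to0$. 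Summing over $j$, I conclude that $\frac{1}{|P_{\rho,\delta}|}\int_{P_{\rho,\delta}}G\,dV\to0$ as $\rho\to0$, for every fixed $\delta\le\delta_0$.

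On the other side, Fubini rewrites the same average as $\frac{1}{|B_\delta|}\int_{B_\delta}\bigl(\frac{1}{|B'_\rho|}\int_{B'_\rho}G(z',z_n)\,dV(z')\bigr)\,dV(z_n)$. Since $G$ is continuous, hence uniformly continuous on the compact set $\overline{P_{\rho_0,\delta_0}}$, the inner average converges to $G(0,z_n)$ uniformly in $z_n$ as $\rho\to0$, so the whole expression converges to $\frac{1}{|B_\delta|}\int_{B_\delta}G(0,z_n)\,dV(z_n)$. Comparing with the previous paragraph yields $\int_{B_\delta}G(0,z_n)\,dV(z_n)=0$ for every $\delta\le\delta_0$. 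Finally, letting $\delta\to0$ and using continuity of $z_n\mapsto G(0,z_n)$ gives $G(0,\ldots,0,z_n^0)=0$, as desired.

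I expect the only delicate point to be the borderline equality $m=2(n-1)/p$, where the crude H\"older estimate yields mere boundedness of the averaged summands rather than decay; this is precisely why the argument keeps $\delta$ fixed and sends $\rho\to0$ first, so that absolute continuity of $\int|v_j|^p$ over the shrinking polydisc provides the missing vanishing before the separate limit $\delta\to0$. The decision to shrink only the first $n-1$ variables is essential to the sharp exponent: a round ball or a uniform polydisc of radius $\rho$ in all $n$ variables would force the volume to scale like $\rho^{2n}$ and require $m>2n/p$, whereas exploiting the product structure lowers the threshold to the stated $m\ge 2(n-1)/p$.
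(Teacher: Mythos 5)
Your proof is correct and rests on the same idea as the paper's: shrink only the $z'$-directions so that the gain $\rho^m$ from $z_j^m$ beats the loss $\rho^{-2(n-1)/p}$ from the $L^p$ normalization, with the borderline case $m=2(n-1)/p$ rescued by absolute continuity of $\int|v_j|^p$ over the shrinking set. The paper implements this by rescaling the argument ($z'\mapsto\delta z'$) and computing $\|G(\delta\cdot,\cdot)\|_{L^p}$ over a fixed polydisc, whereas you average over shrinking polydiscs and invoke uniform continuity of $G$ to identify the limit; this is only a cosmetic difference.
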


\begin{proof} Let $(0, \ldots, 0, z_n^0)\in D$. Write $z'=(z_1, \ldots, z_{n-1})$.
	Then for a sufficiently small positive numbers $a_1$ and $a_2$, we have
	\[
	D(a_1, a_2):=\{|z'|<a_1\}\times \{|z_n-z_n^0|<a_2\}\subset D.
	\]
	For any $\delta\in (0,\ 1)$, we have
	\begin{align*}
	\left(\int_{D(a_1, a_2)} |G(\delta z', z_n)|^p \, dV\right)^{1/p}
	&\le a^m_1 \delta^m \sum_{j=1}^{n-1}\left(\int_{D(a_1, a_2)} |v_j(\delta z', z_n)|^p\, dV\right)^{1/p}\\
	&\le a^m_1\delta^{m-2(n-1)/p}\sum_{j=1}^{n-1}\left(\int_{D(a_1\delta, a_2)} |v_j(z', z_n)|^p\, dV\right)^{1/p}\\
	&\le a^m\delta^{m-2(n-1)/p}\sum_{j=1}^{n-1}\left(\int_D |v_j(z)|^p\chi_{D(a_1\delta, a_2)}(z)\, dV\right)^{1/p}.
	\end{align*}
	Since $m\ge 2(n-1)/p$, letting $\delta\to 0$, we obtain from the Lebesgue dominated convergence
	theorem that
	\[
	\int_{D(a_1, a_2)} |G(0', z_n)|^p\, dV=0.
	\]
	Thus $G(0', z_n)=0$ for $|z_n-z_n^0|<a_2$. \end{proof}

\begin{proof}[Proof of Theorem~\ref{pcnL2}] The proof for $W^{s, p}$-cohomology is the same as for $L^p$-cohomology. For economy of notation, we will only provide the proof for $L^p$-cohomology. Proving by contradiction, we assume that $D$ is not pseudoconvex.
Then there exists a domain $\widetilde D\supsetneqq D$ such that every holomorphic
function on $D$ extends to $\widetilde D$.  After a
translation and a unitary transformation, we may assume that the
origin is in $\widetilde{D}\setminus\overline{D}$ and there is
a point $z^0$ in the intersection of
$z_n$-plane with $D$ that is in the same connected component
of $\widetilde{D}\cap\{z_1=0\}$ as the origin.

For any integers $\alpha \ge 0$, $m\ge 1$, $q\ge 1$,  $\{k_1, \ldots, k_{q-1}\}\subset \{1, 2, \ldots, n-1\}$ and $k_q=n$, let
\begin{equation}\label{eq:um}
u_{\alpha, m}(k_1, \ldots, k_q)=\frac{(\alpha+q-1)! \bar
	z_n^{m\alpha}(\bar z_{k_1}\cdots\bar z_{k_q})^{m-1}}{ r_m^{\alpha+
		q}}\sum_{j=1}^q (-1)^j \bar{z}_{k_j} \widetilde{d\bar z_{k_j}}
\end{equation}
where $r_m=|z_1|^{2m}+\ldots +|z_n|^{2m}$. Evidently,
$u(k_1, \ldots, k_q)$ is a smooth form on
$\C^n\setminus\{ 0\}$. Since $0\notin \overline{D}$, $u(k_1, \ldots, k_q)\in L^p_{(0, q-1)}(D)$.
Moreover, $u(k_1, \ldots, k_q)$ is skew-symmetric with respect to the
indices $(k_1, \ldots, k_{q-1})$.  In particular, $u(k_1, \ldots, k_q)=0$ when two $k_j$'s are identical.

 For $K=(k_1,\ldots, k_q)$,  write $d\bar z_K=d\bar z_{k_1}\wedge \ldots\wedge d\bar z_{k_q}$, $\bar z_K^{m-1}= (\bar z_{k_1}\cdots\bar
 z_{k_q})^{m-1}$.  Denoted by
$(k_1, \ldots, k_q \setminus J)$ the tuple of remaining indices after deleting those
in $J$ from $(k_1, \ldots, k_q)$. It follows from straightforward computations that

\begin{align*}\notag
	\dbar u_{\alpha, m}(k_1, \ldots, k_q)&=-\frac{(\alpha+q)!m \bar
		z_n^{m\alpha}\bar z_K^{m-1}} {r_m^{\alpha+q+1}}\big(r_m d\bar z_K \\
	&\quad +\big(\sum_{\ell =1}^n \bar z_\ell^{m-1} z_\ell^m d\bar
	z_\ell\big)\wedge \big(\sum_{j=1}^q (-1)^j \bar
	z_{k_j}\widetilde{d\bar z_{k_j}}\big)\big)\\
	&=m\sum_{\ell=1}^{n-1} z^m_\ell u_{\alpha, m}(\ell, k_1, \ldots,
	k_q).
\end{align*}

In particular, $u_{\alpha, m}(1, \ldots, n)$ is $\dbar$-closed. Our
next goal is to solve the $\dbar$-equation in $L^p$-spaces
inductively with the $(0, n-1)$-forms $u_{\alpha, m}(1, \ldots, n)$
as the initial data, and eventually produce an $L^p$-holomorphic
function on $D$. This holomorphic function has a holomorphic
extension to $\widetilde D$. By way of the construction, the
extension has singularity at the origin, which leads to a
contradiction. We now provide the details.

We fix $m\ge 2(n-1)/p$. Let $M$ be an integer such that $M>\dim H^{0, q}_{L^p}(D)$ for
all $1\le q\le n-1$. Let $\scriptf_0$ be the linear
span of $\{u_{\alpha, m}(1, \ldots, n); \ \alpha=1, \ldots,
M^{n-1}\}$. For any $u\in\scriptf_0$ and for any $\{k_1, \ldots,
k_{q-1}\}\subset\{1, \ldots, n-1\}$, we set
\[
u(k_1, \ldots, k_{q-1}, n)=\sum_{j=1}^k c_j u_{\alpha_j, m}(k_1,
\ldots, k_{q-1}, n)
\]
if $u=\sum_{j=1}^k c_j u_{\alpha_j, m}(1, \ldots, n)$.  We decompose
$\scriptf_0$ into a direct sum of $M^{n-2}$ subspaces, each of which
is $M$-dimensional. Since $\dim H^{0, n-1}_{L^p}(D)<M$ and $u_{\alpha, m}(1,
\ldots, n)\in \kernel(\dbar_{n-1})$, there exists a non-zero form
$u$ in each of the subspaces such that $\dbar v_u(\emptyset)=u$ for
some $v_u(\emptyset)\in L^p_{(0, n-2)}(D)$.  Let $\scriptf_1$
be the $M^{n-2}$-dimensional linear span of all such $u$'s.  We
extend $u\mapsto v_u(\emptyset)$ linearly to all $u\in\scriptf_1$.

For $0\le q\le n-1$, we use induction on $q$ to construct an
$M^{n-q-2}$-dimensional subspace $\scriptf_{q+1}$ of $\scriptf_q$
with the properties that for any $u\in\scriptf_{q+1}$, there exists
$v_u(k_1, \ldots, k_q)\in L^p_{(0, n-q-2)}(D)$ for all $\{k_1,
\ldots, k_q\}\subset\{1, \ldots, n-1\}$ such that $v_u(k_1, \ldots, k_q)$ depends linearly on $u$;  $v_u(k_1, \ldots, k_q)$ is skew-symmetric with respect to
indices $K=(k_1, \ldots, k_q)$; and
\[
\dbar v_u(K)=m\sum_{j=1}^q (-1)^j z^m_{k_j} v_u(K \setminus
k_j) + (-1)^{q+|K|} u(1, \ldots, n\setminus K),
\]
where $|K|=k_1+\ldots +k_q$.

We now show how to construct $\scriptf_{q+1}$ and $v_u(k_1, \ldots,
k_q)$ for $u\in\scriptf_{q+1}$ and $\{k_1,\ldots, k_q\}\subset\{1,
\ldots, n-1\}$ once $\scriptf_q$ has been constructed.  For any
$u\in\scriptf_q$ and any $\{k_1, \ldots, k_q\} \subset\{1, \ldots,
n-1\}$, write $K=(k_1, \ldots, k_q)$, and let
\[
w_u(K)=m\sum_{j=1}^q (-1)^j z^m_{k_j} v_u(K\setminus k_j)
+(-1)^{q+|K|} u(1, \ldots, n\setminus K) .
\]
Then as in the previous case,
\[
\dbar w_u(K)
=(-1)^{q+|K|}\big( -m\sum_{j=1}^q z^m_{k_j} u(k_j, (1, \ldots, n \setminus
K)) + \dbar u(1, \ldots, n\setminus K)\big)=0.
\]
We again decompose $\scriptf_q$ into a direct sum of $M^{n-q-2}$
linear subspaces, each of which is $M$-dimensional.  Since
$\dim(H^{0,n-q-2}_{L^p}(D))<M$ and $\dbar w_u(K)=0$, there exists a non-zero
form $u$ in each of these subspaces such that $\dbar v_u(K)=w_u(K)$
for some $v_u(K)\in L^p_{(0, n-q-2)}(D)$. Since $w_u(K)$ is
skew-symmetric with respect to indices $K$, we may choose $v_u(K)$
to be skew-symmetric with respect to $K$ as well. The subspace
$\scriptf_{q+1}$ of $\scriptf_q$ is then the linear span of all such
$u$'s.

Note that $\dim(\scriptf_{n-1})=1$.  Let $u$ be any non-zero form in
$\scriptf_{n-1}$ and let
\[
F(z)=w_u(1, \ldots, n-1)= m\sum_{j=1}^{n-1} z^m_j v_u(1, \ldots, n-1\setminus j) - (-1)^{n+\frac{n(n-1)}2} u(n) .
\]
Then $F\in L^p(D)$ and $\dbar F =0$.  Therefore, $F$ is
holomorphic on $D$ and hence has a holomorphic extension to
$\widetilde{D}$.  Restricting to $z_n$-plane, by Lemma~\ref{lm:rest2}, we have
\[
F(0', z_n)=- (-1)^{n+\frac{n(n-1)}2} u(n) (0', z_n)=(-1)^{n+\frac{n(n-1)}2}\sum_{\alpha=1}^M c_\alpha\frac{\alpha!}{z_n^{m(\alpha+1)}}
\]
where the $c_\alpha$'s are constants, not all zeros. This contradicts the analyticity of $F$ near the origin. We therefore conclude the proof of Theorem~\ref{pcnL2}.
\end{proof}

\begin{cor}\label{eqW1}
Let $D\subset\subset\wt\Omega$ be two relatively compact open subsets of $\cb^n$, $n\geq 2$, such that both $\cb^n\setminus\wt\Omega$ and $\wt\Omega\setminus D$ are connected. Assume $D$ has Lipschitz boundary. Then $H^{0,q}_{W^1_{loc}}(\wt\Omega\setminus D)=0$, for all $1\leq q\leq n-2$, and $H^{0,n-1}_{W^1_{loc}}(\wt\Omega\setminus D)$ is Hausdorff if and only if $\wt\Omega$ and $D$ are pseudoconvex.
\end{cor}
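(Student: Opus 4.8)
The plan is to deduce the equivalence from Corollary~\ref{caractL2} and Theorem~\ref{pcnL2}, with H\"ormander's $L^2$-existence theorem supplying the easy direction. First I would check that Corollary~\ref{caractL2} applies: we are given that $\cb^n\setminus\wt\Omega$ is connected and that $D$ has Lipschitz boundary, and since $bD$ is Lipschitz the hypothesis that $\wt\Omega\setminus D$ is connected is equivalent to requiring $\Omega=\wt\Omega\setminus\ol D$ to be connected (the two sets differ only by the boundary $bD=\ol D\setminus D$, which for a Lipschitz $D$ does not change the number of components of the relevant open set). With its hypotheses verified, Corollary~\ref{caractL2} tells us that assertion (i) there is equivalent to assertion (ii) there. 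But assertion (i) is \emph{exactly} the left-hand side of the statement to be proved. Hence the left-hand side holds if and only if $H^{n,q}_{L^2}(D)=0$ for all $1\leq q\leq n-1$ and $\wt\Omega$ is pseudoconvex.

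It therefore remains to show that this last condition is equivalent to ``$\wt\Omega$ and $D$ are pseudoconvex.'' Since pseudoconvexity of $\wt\Omega$ is common to both formulations, the task reduces, for the bounded open set $D$ with interior$(\ol D)=D$ (which holds because $bD$ is Lipschitz), to
\[
H^{n,q}_{L^2}(D)=0 \ \text{ for all } \ 1\leq q\leq n-1 \quad\Longleftrightarrow\quad D \ \text{ is pseudoconvex}.
\]
For the implication $\Leftarrow$ I would invoke H\"ormander's $L^2$-estimates (see, e.g., \cite{Hormander90}): a pseudoconvex $D$ is, by the Lipschitz hypothesis, a finite union of bounded pseudoconvex domains, and on each such domain $H^{n,q}_{L^2}=0$ for every $q\geq 1$; the $L^2$-cohomology of $D$ being the direct sum over components, the vanishing follows. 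For the implication $\Rightarrow$ I would trivialize the canonical bundle of $\cb^n$ by the flat holomorphic form $dz_1\wedge\cdots\wedge dz_n$: multiplication by it is a bounded $\opa$-commuting isomorphism
\[
H^{0,q}_{L^2}(D)\;\longrightarrow\;H^{n,q}_{L^2}(D),
\]
so the vanishing of $H^{n,q}_{L^2}(D)$ forces $H^{0,q}_{W^{0,2}}(D)=H^{0,q}_{L^2}(D)=0$, hence finite dimensional, for all $1\leq q\leq n-1$. Applying Theorem~\ref{pcnL2} with $s=0$ and $p=2$ to each connected component of $D$ (each a bounded domain with interior$(\ol{D_i})=D_i$) then yields that every component, and so $D$ itself, is pseudoconvex. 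Combining the two reductions closes the equivalence.

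I expect the only point needing care---rather than a genuine obstacle---to be the bookkeeping for a possibly disconnected $D$: confirming that the Lipschitz hypothesis simultaneously gives interior$(\ol D)=D$, permits the componentwise application of Theorem~\ref{pcnL2}, and reconciles the connectivity assumption $\wt\Omega\setminus D$ with the form $\wt\Omega\setminus\ol D$ required by Corollary~\ref{caractL2}. The substantive analytic work---the $L^p$-restriction device of Lemma~\ref{lm:rest2} driving the Laufer-type induction---has already been packaged into Theorem~\ref{pcnL2}, so no new estimates are required here; the argument is essentially an assembly of results proved earlier in the paper together with the canonical-bundle identification and H\"ormander's theorem.
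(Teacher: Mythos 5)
Your proposal is correct and follows essentially the same route as the paper: the paper's proof is precisely "necessary condition from Corollary~\ref{caractL2} and Theorem~\ref{pcnL2}; sufficient condition from H\"ormander's $L^2$-vanishing and Corollary~\ref{caractL2}." The extra details you supply (the $dz_1\wedge\cdots\wedge dz_n$ identification of $H^{n,q}_{L^2}$ with $H^{0,q}_{L^2}$ so that Theorem~\ref{pcnL2} applies with $s=0$, $p=2$, the componentwise treatment of a possibly disconnected Lipschitz $D$, and the reconciliation of $\wt\Omega\setminus D$ with $\wt\Omega\setminus\ol D$) are exactly the routine points the paper leaves implicit, and they all check out.
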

\begin{proof}
The necessary condition is a direct consequence of Corollary \ref{caractL2} and  Theorem \ref{pcnL2}. The sufficient condition follows from H\"ormander vanishing $L^2$-theory  and  Corollary \ref{caractL2}.
\end{proof}

  Next we give a characterisation of the annulus domain by its $L^2$ Dolbeault  cohomology when the inner hole  $D$ has $\cc^2$ boundary.

\begin{cor}\label{eqL2}
Let $D\subset\subset\wt\Omega$ be two relatively compact open subsets of $\cb^n$, $n\geq 2$, such that both $\cb^n\setminus\wt\Omega$ and $\wt\Omega\setminus D$ are connected. Assume  $D$ has $\cc^2$ boundary and $\wt\Omega$ has Lipschitz boundary.
Then  $H^{0,q}_{L^2}(\wt\Omega\setminus D)=0$, for all $1\leq q\leq n-2$, and $H^{0,n-1}_{L^2}(\wt\Omega\setminus D)$ is Hausdorff if and only if $\wt\Omega$ and $D$ are pseudoconvex.
\end{cor}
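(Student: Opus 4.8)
The plan is to derive the corollary entirely from the duality Corollary~\ref{caractL2} together with Theorem~\ref{pcnL2}, reducing both implications to a single statement about the hole $D$ alone. The hypotheses guarantee that $D$ (being $\cc^2$, hence Lipschitz) and $\wt\Omega$ have Lipschitz boundary and that $\Omega=\wt\Omega\setminus\ol D$ is connected, so the equivalence (i')$\Leftrightarrow$(ii') of Corollary~\ref{caractL2} applies; moreover the $L^2$-cohomology ignores the measure-zero set $bD$, so that $H^{0,q}_{L^2}(\wt\Omega\setminus D)=H^{0,q}_{L^2}(\Omega)$. Consequently the asserted equivalence will follow once I establish that, for the bounded $\cc^2$ domain $D\subset\cb^n$,
\[
H^{n,q}_{W^1}(D)=0 \ \text{ for all } 1\le q\le n-1 \quad\Longleftrightarrow\quad D \ \text{is pseudoconvex};
\]
conjoining each side with ``$\wt\Omega$ pseudoconvex'' then matches (ii') on the left with the desired conclusion on the right. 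Throughout I use the $\dbar$-commuting isomorphism $\phi\mapsto dz_1\wedge\cdots\wedge dz_n\wedge\phi$, which has constant coefficients and hence preserves the $W^1$-norm, so that $H^{n,q}_{W^1}(D)\cong H^{0,q}_{W^1}(D)$.

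First I would treat the necessity of pseudoconvexity. Assuming the $L^2$-conditions (i'), Corollary~\ref{caractL2} gives at once that $\wt\Omega$ is pseudoconvex and that $H^{n,q}_{W^1}(D)=0$, whence $H^{0,q}_{W^1}(D)=0$, for all $1\le q\le n-1$. These groups are in particular finite dimensional, and since $bD$ is $\cc^2$ we have interior$(\ol D)=D$; thus Theorem~\ref{pcnL2}, applied with $s=1$ and $p=2$, yields that $D$ is pseudoconvex. This direction therefore needs only the Lipschitz regularity of $bD$.

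For the converse I would start from $\wt\Omega$ and $D$ both pseudoconvex. Because $\wt\Omega$ is pseudoconvex, Corollary~\ref{caractL2} reduces the desired conclusion (i') to verifying the vanishing $H^{n,q}_{W^1}(D)=H^{0,q}_{W^1}(D)=0$ for $1\le q\le n-1$, i.e.\ to the $W^1$-solvability of $\dbar$ on $D$: every $\dbar$-closed $f\in W^1_{0,q}(D)$ should be written as $\dbar u$ with $u\in W^1_{0,q-1}(D)$. I expect this Sobolev regularity to be the main obstacle and the sole place where the $\cc^2$-hypothesis is genuinely needed: H\"ormander's $L^2$-theory provides a solution in $L^2$, but upgrading it to a solution with $W^1$-estimates \emph{up to the boundary} requires control of $bD$ beyond mere Lipschitz regularity, which is exactly why $D$ must be taken $\cc^2$ here rather than only Lipschitz. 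By contrast, Corollary~\ref{eqW1} pairs the $W^1_{loc}$-cohomology of $\Omega$ with $H^{n,q}_{L^2}(D)$, whose vanishing is immediate from H\"ormander and needs only a Lipschitz hole. I would therefore invoke the $W^1$-regularity theory for $\dbar$ on bounded pseudoconvex domains with $\cc^2$ boundary to conclude the vanishing, and hence (i'), completing the proof.
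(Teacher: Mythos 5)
Your proposal is correct and follows essentially the same route as the paper: both directions are funneled through the equivalence (i')$\Leftrightarrow$(ii') of Corollary~\ref{caractL2}, with necessity of pseudoconvexity of $D$ coming from Theorem~\ref{pcnL2} applied to the finite-dimensional (indeed vanishing) groups $H^{n,q}_{W^1}(D)$, and sufficiency resting on the $W^1$-solvability of $\dbar$ on bounded pseudoconvex domains with $\cc^2$ boundary. The one ingredient you leave as an appeal to ``$W^1$-regularity theory'' is exactly what the paper supplies by citing Harrington's Sobolev estimates (Theorem~3 of \cite{Har}), so no gap remains.
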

\begin{proof}
We first  prove  sufficiency.  If   $D$ has  $\cc^3$ boundary and $n\geq 3$, this follows directly  from
\cite{Shaw85}.   If  the boundary is only $\cc^2$, it follows from Theorem 3 in \cite{Har} that  $H^{n,q}_{W^1}(D)=0$, for all $1\leq q\leq n-1$. (When the boundary is $\cc^\infty$, this follows from the work of Kohn~\cite{Ko74}.)  The sufficiency then follows from Corollary \ref{caractL2}.

The necessary condition is a direct consequence of Corollary \ref{caractL2} and  Theorem \ref{pcnL2}.
\end{proof}

 Next we will set up the spectral theory for the $\dbar$-Neumann operator. Let $X$ be a complex manifold of dimension $n$ equipped with a  hermitian metric.
 Let $\Omega$ be a bounded domain in $X$.
Let
\[
Q^{\Omega}_{p, q}(u, v)=\langle\dbar_{p,q} u, \dbar_{p,q}
v\rangle_\Omega+\langle\dbarstar_{p,q-1} u,
\dbarstar_{p,q-1} v\rangle_\Omega
\]
be the sesquilinear form on $L^2_{p, q}(\Omega)$ with domain of definition
$\dom(Q^{\Omega}_{p, q})=\dom(\dbar_{p, q})\cap \dom(\dbarstar_{p, q-1})$.
Then $Q^{\Omega}_{p, q}$ is densely defined and closed. It then
follows from general operator theory (see \cite{Davies95}) that $Q^\Omega_{p, q}$
uniquely determines a densely defined, non-negative, self-adjoint operator
$\square^{\Omega}_{p, q}\colon L^2_{p, q}(\Omega)\to L^2_{p,
	q}(\Omega)$ such that $\dom((\square^{\Omega}_{p, q})^{1/2})=\dom(Q^{\Omega}_{p, q})$
and
\[
Q^{\Omega}_{p, q}(u, v)=\langle (\square^{\Omega}_{p, q})^{1/2} u, (\square^{\Omega}_{p, q})^{1/2} v\rangle, \qquad \text{for}\ u, v\in \dom(Q^{\Omega}_{p, q}).
\]
Moreover,
\[
\dom(\square^{\Omega}_{p, q})=\{ u\in \dom(Q^{\Omega}_{p, q})\mid \dbar_{p,q} u\in \dom(\dbarstar_{p, q}), \dbarstar_{p, q-1} u\in \dom(\dbar_{p, q-1})\}.
\]
The operator $\square^{\Omega}_{p, q}$ is {\it the
	$\dbar$-Neumann Laplacian} on $L^2_{p, q}(\Omega)$. (We refer the reader to \cite[\S2]{Fu10} for
a spectral theoretic setup for the $\dbar$-Neumann Laplacian.) We will drop the superscript and/or subscript from $\square^\Omega_{p, q}$ when
their appearances are either inconsequential or clear from the context.

Let $\sigma(\square_{p, q})$ be the spectrum of $\square_{p, q}$. Recall that $\sigma(\square_{p, q})$ is the complement in $\C$ of the resolvent set which consists of all $\lambda\in\C$ such that $\lambda I-\square_{p, q}\colon \dom(\square_{p, q})\to L^2_{p, q}(\Omega)$ is one-to-one, onto, and has bounded inverse. (See \cite{Davies95} for relevant material on spectral theory of differential operators.) Since $\square_{p,q }$ is a non-negative self-adjoint operator on a Hilbert space,  $\sigma(\square_{p, q})$ is a non-empty closed subset of the interval $[0, \ \infty)$. Let $\sigma_e(\square_{p, q})$ be the essential spectrum of $\square_{p, q}$; namely, points in $\sigma(\square_{p, q})$ that  are either isolated points of the spectrum but eigenvalues of infinity multiplicity; or limit points of the spectrum. By definition, the essential spectrum $\sigma_e(\square_{p, q})$ is also a closed subset and the set of limit points of $\sigma_e(\square_{p, q})$ is the same as that of $\sigma(\square_{p, q})$.  We summarize the following spectral theoretic interpretations for positivity of the $\dbar$-Neumann Laplacian $\square_{p, q}$ in the following proposition:

\begin{prop}\label{prop:spectral} Let $\square_{p, q}$ be the $\dbar$-Neumann Laplacian on $(p, q)$-forms.
	
	\begin{enumerate}
		\item $0$ is not a limit point of $\sigma (\square_{p, q})$ if and only if both $\range(\dbar_{p, q-1})$ and $\range(\dbar_{p, q})$ are closed.
		\item $0\notin\sigma_e(\square_{p, q})$ if and only if $\range(\dbar_{p, q-1})$ and $\range(\dbar_{p, q})$ are closed, and $H^{p, q}_{L^2}(\Omega)$ is finite dimensional.
		
		\item $0\notin\sigma(\square_{p, q})$ if and only if $\range(\dbar_{p, q-1})$ and $\range(\dbar_{p, q})$ are closed, and $H^{p, q}_{L^2}(\Omega)$ is trivial.
	\end{enumerate}
\end{prop}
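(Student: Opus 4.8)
The plan is to derive all three equivalences from a single analytic core, namely statement~(1), together with the weak Hodge decomposition and elementary self-adjoint spectral theory. Throughout write $\mathcal H^{p,q}=\kernel(\square_{p, q})$ for the space of $L^2$-harmonic $(p,q)$-forms; since $\square_{p, q}$ is nonnegative and self-adjoint, $\mathcal H^{p,q}$ is exactly the range of the spectral projection $E(\{0\})$, and $Q_{p,q}(u,u)=\|(\square_{p, q})^{1/2}u\|^2$ on $\dom(Q_{p,q})=\dom((\square_{p, q})^{1/2})$. The key translation, furnished by the functional calculus (\cite{Davies95}), is that $0$ is not a limit point of $\sigma(\square_{p, q})$ if and only if $\square_{p, q}$ is bounded below by a positive constant on $(\mathcal H^{p,q})^\perp$, i.e. there is $\delta>0$ with $Q_{p,q}(u,u)\ge\delta\|u\|^2$ for all $u\in\dom(Q_{p,q})\cap(\mathcal H^{p,q})^\perp$.

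To establish (1) I would first record the orthogonal decomposition
\[
L^2_{p,q}(\Omega)=\overline{\range(\dbar_{p,q-1})}\oplus\mathcal H^{p,q}\oplus\overline{\range(\dbarstar_{p,q})},
\]
which follows from $\overline{\range T}=(\kernel T^*)^\perp$ applied to $\dbar_{p,q-1}$ and $\dbarstar_{p,q}$ together with $\dbar^2=0$ and $(\dbarstar)^2=0$. For $u\in\dom(Q_{p,q})\cap(\mathcal H^{p,q})^\perp$ write $u=u_1+u_2$ with $u_1\in\overline{\range(\dbar_{p,q-1})}\subset\kernel(\dbar_{p,q})$ and $u_2\in\overline{\range(\dbarstar_{p,q})}\subset\kernel(\dbarstar_{p,q-1})$. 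Because $u_1\in\kernel(\dbar_{p,q})$ and $u_2\in\kernel(\dbarstar_{p,q-1})$, subtraction gives $u_2=u-u_1\in\dom(\dbar_{p,q})$ and $u_1=u-u_2\in\dom(\dbarstar_{p,q-1})$, whence $\dbar u=\dbar u_2$, $\dbarstar u=\dbarstar u_1$, and $Q_{p,q}(u,u)=\|\dbar u_2\|^2+\|\dbarstar u_1\|^2$. Now apply the closed range theorem in both directions. If both ranges are closed, then $\dbar_{p,q}$ is bounded below on $(\kernel\dbar_{p,q})^\perp=\overline{\range(\dbarstar_{p,q})}\ni u_2$ and $\dbarstar_{p,q-1}$ is bounded below on $\overline{\range(\dbar_{p,q-1})}\ni u_1$ (using that $\range(\dbar_{p,q-1})$ is closed iff $\range(\dbarstar_{p,q-1})$ is), yielding coercivity with $\delta$ the minimum of the two lower bounds. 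Conversely, applying the coercivity estimate to $u\in\overline{\range(\dbarstar_{p,q})}$ (where $\dbarstar u=0$) and to $u\in\overline{\range(\dbar_{p,q-1})}$ (where $\dbar u=0$) recovers the two lower bounds, hence the closedness of $\range(\dbar_{p,q})$ and of $\range(\dbarstar_{p,q-1})=\range(\dbar_{p,q-1})$.

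For (2) and (3) I would bootstrap from (1). The splitting $\kernel(\dbar_{p,q})=\overline{\range(\dbar_{p,q-1})}\oplus\mathcal H^{p,q}$ shows that whenever $\range(\dbar_{p,q-1})$ is closed there is a natural isomorphism $H^{p,q}_{L^2}(\Omega)=\kernel(\dbar_{p,q})/\range(\dbar_{p,q-1})\cong\mathcal H^{p,q}$. For (3): $0\notin\sigma(\square_{p, q})$ means $\square_{p, q}$ is boundedly invertible, equivalently $\mathcal H^{p,q}=0$ and $0$ is not a limit point; by (1) the latter is closedness of both ranges, under which $H^{p,q}_{L^2}(\Omega)\cong\mathcal H^{p,q}=0$, and the converse runs backward along the same chain. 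For (2): by the definition of $\sigma_e$ recalled above, $0\notin\sigma_e(\square_{p, q})$ precisely when $0$ is not a limit point of $\sigma(\square_{p, q})$ and $\dim\mathcal H^{p,q}<\infty$; statement (1) identifies the first condition with closedness of both ranges, and under that closedness $\dim\mathcal H^{p,q}=\dim H^{p,q}_{L^2}(\Omega)$, giving the equivalence.

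The main obstacle lies entirely inside (1) and is twofold. First, one must verify that the summands $u_1,u_2$ of the weak Hodge decomposition genuinely lie in $\dom(\dbar_{p,q})$ and $\dom(\dbarstar_{p,q-1})$, so that $Q_{p,q}(u,u)$ splits as claimed; this is exactly where the inclusions $\overline{\range(\dbar_{p,q-1})}\subset\kernel(\dbar_{p,q})$ and $\overline{\range(\dbarstar_{p,q})}\subset\kernel(\dbarstar_{p,q-1})$—valid since the operators are closed and $\dbar^2=0$—do the work. Second, one needs the clean passage between the spectral-gap statement and form coercivity, resting on the self-adjoint functional calculus and on the closed range theorem in the forms \emph{``$T$ has closed range iff it is bounded below on $(\kernel T)^\perp$''} and \emph{``$\range T$ is closed iff $\range T^*$ is''}. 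Once these ingredients are in place, the three equivalences follow formally.
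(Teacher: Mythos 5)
Your proof is correct. The paper does not prove this proposition itself but defers to \cite[\S 1.1]{Hormander65} and \cite[\S 2]{Fu05}, and your argument --- the weak Hodge decomposition $L^2_{p,q}=\overline{\range(\dbar_{p,q-1})}\oplus\mathcal H^{p,q}\oplus\overline{\range(\dbarstar_{p,q})}$ combined with the spectral-theoretic equivalence of a gap above $0$ with coercivity of $Q_{p,q}$ on $(\mathcal H^{p,q})^\perp$ and the closed range theorem --- is essentially the standard argument carried out in those references.
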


We refer the reader to \cite[\S1.1]{Hormander65} (see also \cite[Appendix A]{Hormander04}) for proofs of (1) and (3) and to \cite[\S2]{Fu05}(see also \cite[\S2]{Fu10}) for a proof of (2).

  Recall that, for a bounded domain $\Omega$ in a complex, hermitian, $n$-dimensional manifold, the top degree $L^2$-cohomology groups $H^{p,n}_{L^2}(\Omega)$, $0\leq p\leq n$, always vanish, hence $\range(\dbar_{p, n-1})$ is closed in this case. So, in top degree, the first assertion of Proposition~\ref{prop:spectral} becomes:
$0$ is not a limit point of $\sigma (\square_{p, n-1})$ if and only if  $\range(\dbar_{p, n-2})$ is closed.  Combining Proposition~\ref{prop:spectral} with Theorems~\ref{dualL2} and \ref{pcnL2}, we then have:


\begin{thm}\label{th:positivity1}  Let $X$ be a Stein manifold of dimension $n\ge 3$, equipped with a hermitian metric.  Let $\Omega=\widetilde{\Omega}\setminus \overline{D}$ where $\widetilde{\Omega}$ is a relatively compact domain with  connected complement in $X$ and $D\subset\subset\wt{\Omega}$ is an open set with connected complement in $\widetilde{\Omega}$ and with $C^2$-boundary. If both $\widetilde{\Omega}$ and  $D$ are pseudoconvex,
 then there exists a constant $C>0$ such that
	\begin{equation}\label{eq:inf1}
	\inf{\sigma(\square^\Omega_{p, q})} \ge C
	\end{equation}
	for all $0\le p\le n$ and $1\le q\le n-2$ and
	\begin{equation}\sigma(\square^\Omega_{p, n-1})\cap (0, \ C)=\emptyset.\end{equation}
\end{thm}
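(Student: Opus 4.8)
The plan is to reduce the spectral statement to closed‑range and vanishing properties of $\dbar$ on $\Omega$ via Proposition~\ref{prop:spectral}, and then to produce those properties from the pseudoconvexity of $\wt\Omega$ and $D$ by way of the duality in Theorem~\ref{dualL2}. Since $\sigma(\square^\Omega_{p,q})$ is a closed subset of $[0,\infty)$, inequality~\eqref{eq:inf1} amounts to $0\notin\sigma(\square^\Omega_{p,q})$, which by Proposition~\ref{prop:spectral}(3) is equivalent to $\range(\dbar_{p,q-1})$ and $\range(\dbar_{p,q})$ being closed together with $H^{p,q}_{L^2}(\Omega)=0$; likewise $\sigma(\square^\Omega_{p,n-1})\cap(0,C)=\emptyset$ is, by Proposition~\ref{prop:spectral}(1), equivalent to $\range(\dbar_{p,n-2})$ and $\range(\dbar_{p,n-1})$ being closed. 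As there are only finitely many pairs $(p,q)$, a single $C>0$ will be obtained at the end by taking the minimum of the resulting gaps.

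First I would observe that, being a relatively compact pseudoconvex domain in the Stein manifold $X$, the domain $\wt\Omega$ is itself Stein. Since $D\subset\subset\wt\Omega$ has $C^2$—hence Lipschitz—boundary and $\wt\Omega\setminus D$ is connected, Theorem~\ref{dualL2} applies with the Stein manifold $\wt\Omega$ in the role of $X$; note that no boundary regularity of $\wt\Omega$ is needed, only that it is Stein. The pseudoconvexity of $D$ with its $C^2$ boundary yields, by Theorem~3 of \cite{Har}, that $H^{n,q}_{W^1}(D)=0$ for all $1\le q\le n-1$, which is exactly assertion (iii) of Theorem~\ref{dualL2}. Hence assertion (i) holds: $H^{0,q}_{L^2}(\wt\Omega\setminus D)=0$ for $1\le q\le n-2$ and $H^{0,n-1}_{L^2}(\wt\Omega\setminus D)$ is Hausdorff. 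Because the $L^2$-Dolbeault cohomology is insensitive to the null hypersurface $bD$, these groups coincide with those of $\Omega=\wt\Omega\setminus\ol D$. The same conclusions hold in every bidegree $(p,q)$: one repeats the argument for $\dbar$ with values in the holomorphic vector bundle $\Lambda^{p,0}$ over the Stein manifold $\wt\Omega$ (equivalently, the Serre‑duality argument of Theorem~\ref{dualL2} and the $\dbar$-Neumann regularity on the pseudoconvex $C^2$ domain $D$ are valid in all bidegrees), giving $H^{p,q}_{L^2}(\Omega)=0$ for $1\le q\le n-2$ and $H^{p,n-1}_{L^2}(\Omega)$ Hausdorff.

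From these facts I would extract closed range in every degree. Vanishing of the unreduced group $H^{p,q}_{L^2}(\Omega)$ for $1\le q\le n-2$ means $\range(\dbar_{p,q-1})=\kernel(\dbar_{p,q})$, so $\range(\dbar_{p,j})$ is closed for $0\le j\le n-3$; Hausdorffness at degree $n-1$ gives that $\range(\dbar_{p,n-2})$ is closed; and $\range(\dbar_{p,n-1})$ is closed automatically, since $H^{p,n}_{L^2}(\Omega)=0$ in top degree. Thus $\range(\dbar_{p,j})$ is closed for all $0\le j\le n-1$. For $1\le q\le n-2$ this furnishes exactly the three conditions of Proposition~\ref{prop:spectral}(3), whence $0\notin\sigma(\square^\Omega_{p,q})$ and $\inf\sigma(\square^\Omega_{p,q})>0$; for $q=n-1$ it furnishes the two conditions of Proposition~\ref{prop:spectral}(1), whence $0$ is not a limit point of $\sigma(\square^\Omega_{p,n-1})$, so $\sigma(\square^\Omega_{p,n-1})\cap(0,C_p)=\emptyset$ for some $C_p>0$. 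Taking $C$ to be the minimum over the finitely many pairs $(p,q)$ with $0\le p\le n$ and $1\le q\le n-1$ completes the argument.

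The main obstacle is the transfer of cohomological information to $\Omega$, which is not pseudoconvex; this is where Theorem~\ref{dualL2} and the hard boundary input of \cite{Har} on the $C^2$ pseudoconvex inner domain $D$ do the real work, and where $C^2$ (rather than merely Lipschitz) regularity of $bD$ is essential. A secondary point requiring care is the bookkeeping of indices so that every closed‑range hypothesis of Proposition~\ref{prop:spectral} is met—in particular the closedness of $\range(\dbar_{p,n-2})$, which comes not from a vanishing statement but from the Hausdorff property in degree $n-1$—together with the uniformity of $C$, harmless here only because finitely many bidegrees occur. Finally, I would verify that the bundle‑twist reduction delivering all $p$ is legitimate, i.e.\ that the duality and regularity results genuinely hold in arbitrary bidegree over the Stein manifold $\wt\Omega$.
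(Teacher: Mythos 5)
Your proposal is correct and follows essentially the same route as the paper: it reduces the spectral statement to closed range and vanishing of the $L^2$-cohomology via Proposition~\ref{prop:spectral}, obtains these from $H^{n,q}_{W^1}(D)=0$ (Harrington's theorem for the $C^2$ pseudoconvex hole) through the duality of Theorem~\ref{dualL2} applied with the Stein manifold $\wt\Omega$ in place of $X$, and handles degree $n-1$ separately using the automatic vanishing of the top-degree $L^2$-cohomology. This is precisely the combination the paper invokes, with the same index bookkeeping and the same reduction to $(0,q)$-forms.
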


The converse of the above theorem also holds. We summarize the results  in a slight more general form as follows:

\begin{thm}\label{th:chaS} Let $\widetilde{\Omega}$ be a bounded domain with connected complement in  a hermitian  Stein manifold and let $D$ be a relatively compact open subset of $\widetilde{\Omega}$ with connected complement. Let $\Omega=\widetilde{\Omega}\setminus \overline{D}$. Suppose  $\widetilde{\Omega}$ and $D$ have Lipschitz boundary.  Fix $0\le p\le n$. If $0\not\in\sigma_e(\square_{p, q})$ for $1\le q\le n-2$ when $n\ge 3$ or if $0$ is not a limit point for $\sigma_e(\square_{p, 1})$ when $n=2$, then both $\widetilde{\Omega}$ and $D$ are pseudoconvex.
\end{thm}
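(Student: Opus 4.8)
The plan is to derive pseudoconvexity of $\wt\Omega$ and of $D$ separately, treating them through the two halves of the annulus and feeding finite-dimensional cohomology into Theorem~\ref{pcnL2}. First I would reduce to $p=0$: in $\cb^n$ the $\dbar$-complex on $(p,q)$-forms is a direct sum of copies of the $(0,q)$-complex via a holomorphic coframe, so finiteness and Hausdorffness of the $(p,q)$-cohomology are equivalent to those of the $(0,q)$-cohomology (in a general Stein manifold one argues similarly through local holomorphic frames); hence we may assume $p=0$. By Proposition~\ref{prop:spectral}(2), the hypothesis $0\notin\sigma_e(\square_{0,q})$ for $1\le q\le n-2$ means exactly that $\range(\dbar_{0,q-1})$ and $\range(\dbar_{0,q})$ are closed and $H^{0,q}_{L^2}(\Omega)$ is finite dimensional for each such $q$. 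In particular $\range(\dbar_{0,n-2})$ is closed, so $H^{0,n-1}_{L^2}(\Omega)$ is Hausdorff (the top range being automatically closed). When $n=2$, Proposition~\ref{prop:spectral}(1) together with the same remark on the top degree shows that the hypothesis on $\sigma_e(\square_{0,1})$ gives that $H^{0,1}_{L^2}(\Omega)$ is Hausdorff.

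Next I would settle pseudoconvexity of $\wt\Omega$. The Hartogs-type restriction argument underlying Theorem~\ref{L2}(i) (equivalently Lemma~\ref{lm:hartogs1}) shows that the restriction map $H^{0,q}_{L^2}(\wt\Omega)\to H^{0,q}_{L^2}(\Omega)$ is injective: a $\dbar$-closed $L^2$-form on $\wt\Omega$ whose restriction to $\Omega$ is $\dbar$-exact can, after cutting off near $\ol D$ and solving $\dbar$ with compact support on a larger pseudoconvex domain (using that $X$ is Stein), be written as $\dbar$ of an $L^2$-form on $\wt\Omega$. Hence $\dim H^{0,q}_{L^2}(\wt\Omega)\le\dim H^{0,q}_{L^2}(\Omega)<\infty$ for $1\le q\le n-2$, while Theorem~\ref{L2}(ii), using that $\wt\Omega$ has Lipschitz boundary and that $H^{0,n-1}_{L^2}(\Omega)$ is Hausdorff, gives $H^{0,n-1}_{L^2}(\wt\Omega)=0$. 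Thus $H^{0,q}_{L^2}(\wt\Omega)$ is finite dimensional for all $1\le q\le n-1$, and since $\mathrm{interior}(\ol{\wt\Omega})=\wt\Omega$, Theorem~\ref{pcnL2} (with $s=0$, $p=2$) forces $\wt\Omega$ to be pseudoconvex; in particular $H^{0,q}_{L^2}(\wt\Omega)=0$ for all $q\ge1$.

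For $D$ I would pass through the Serre-duality machinery of Section~\ref{sec:chaL2}. Because $\Omega$ has Lipschitz boundary, the $L^2$-Serre duality (Theorem~\ref{serre} in its Hausdorff form) transfers the finiteness and Hausdorffness of $H^{0,q}_{L^2}(\Omega)$ to the dual cohomology $H^{n,n-q}_{c,L^2}(\Omega)=H^{n,n-q}_{\ol\Omega,L^2}(X)$; combined with the now-established pseudoconvexity of $\wt\Omega$, the arguments proving Theorems~\ref{dualW1} and \ref{dualL2} and Corollary~\ref{caractL2} relate these, through the long exact sequence for cohomology with support in $\ol D$, to $H^{n,q}_{W^1}(D)$. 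Running those arguments with ``vanishing'' replaced by ``finite dimensional'' yields that $H^{n,q}_{W^1}(D)$ is finite dimensional for all $1\le q\le n-1$. Identifying $(n,q)$-forms with $(0,q)$-forms through the canonical bundle, which is locally trivial near the relevant boundary point, this gives finite-dimensional $H^{0,q}_{W^1}(D)$, so Theorem~\ref{pcnL2} (with $s=1$, $p=2$) — whose Laufer-type construction localizes near the boundary point of $D$ at which holomorphic extension would fail, and is legitimate in $L^2$/$W^1$ thanks to Lemma~\ref{lm:rest2} — shows that $D$ is pseudoconvex. The case $n=2$ is simpler: only the Hausdorffness of $H^{0,1}_{L^2}(\Omega)$ enters, Corollary~\ref{caractL2} gives $H^{n,1}_{W^1}(D)=0$ outright, and Theorem~\ref{pcnL2} applies.

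The main obstacle is the finite-dimensional bookkeeping in the third step. The duality results of Section~\ref{sec:chaL2} are phrased for vanishing cohomology, where one solves $\dbar$ exactly; here one can only solve modulo a finite-dimensional space, and one must check that the connecting maps in the support long exact sequence have closed range so that finite dimensionality genuinely propagates from $H^{0,q}_{L^2}(\Omega)$ to $H^{n,q}_{W^1}(D)$ rather than leaking into an infinite-dimensional quotient. The closed-range conclusions extracted in the first step (all $\range(\dbar_{0,q})$, $0\le q\le n-2$, closed) are exactly what guarantee this, and verifying that they suffice at each stage of the duality is where the real work lies. A secondary point to handle carefully is the reduction to $p=0$ and the passage from $\cb^n$ to a general Stein manifold in applying Theorem~\ref{pcnL2}, both of which rest on the fact that the obstructing construction is local.
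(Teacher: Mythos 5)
Your overall architecture coincides with the paper's: translate the spectral hypothesis via Proposition~\ref{prop:spectral} into closed range plus finite-dimensional $H^{0,q}_{L^2}(\Omega)$, push finiteness from $\Omega$ to $\wt\Omega$ by the Hartogs-type restriction argument (Lemma~\ref{lm:hartogs1}/Lemma~\ref{lm:hartogs}) and get $H^{0,n-1}_{L^2}(\wt\Omega)=0$ from Theorem~\ref{L2}(ii), conclude $\wt\Omega$ is pseudoconvex by Theorem~\ref{pcnL2}, and then transfer information across the inner boundary by duality to apply Theorem~\ref{pcnL2} to $D$. The first half of your argument is essentially identical to the paper's and is correct.

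The genuine gap is in the second half. You write that ``running those arguments with `vanishing' replaced by `finite dimensional' yields that $H^{n,q}_{W^1}(D)$ is finite dimensional,'' and you yourself flag that one must check closed range of the connecting maps in a ``long exact sequence for cohomology with support in $\ol D$'' --- but that is precisely the step that needs a proof, and the long-exact-sequence mechanism you propose is never set up (and would be delicate here, since the groups involved live in mixed regularity classes: $W^1$ on $D$, $L^2$ with support in $\ol\Omega$ on $X$). The paper does \emph{not} argue this way; it proves the needed finite-dimensional transfer directly in Lemma~\ref{lm:hartogs4}, applied with $X=\wt\Omega$ once $\wt\Omega$ is known to be Stein. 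The mechanism there is concrete: if $\dim H^{n,q+1}_{\ol\Omega,L^2}(X)=N$, choose $\dbar$-closed representatives $g_1,\dots,g_N$ supported on $\ol\Omega$, solve $\dbar h_j=g_j$ globally on the Stein manifold with $W^1$ interior regularity on $D$, and show, using the Lipschitz extension property of $W^1_{n,q}(D)$, that every $\dbar$-closed $f\in W^1_{n,q}(D)$ is $\sum c_jh_j+\dbar u$ on $D$; the reverse inequality is proved symmetrically. This replaces the inductive exact-solvability used in Theorem~\ref{dualL2} by solvability modulo an explicitly spanned finite-dimensional space, and it is exactly the bookkeeping you identify as ``where the real work lies'' without carrying it out. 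Until that lemma (or an equivalent) is proved, the pseudoconvexity of $D$ does not follow. (Your $n=2$ case is fine, since there only the Hausdorff property in top degree is needed and \cite[Proposition~4.7]{LaShdualiteL2} gives $H^{n,n-1}_{W^1}(D)=0$ outright.)
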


The above theorem is a consequence of Proposition~\ref{prop:spectral} and the characterization  of pseudoconvexity by $L^2$-cohomology groups in Section~\ref{sec:chaL2}. Note that when $n\ge 3$, one only need to assume the positivity of $\sigma_e(\square_{p, q})$ for $1\le q\le n-2$. For $n=2$, we use, as   noted above, that $\range(\dbar_{p, n-1})$ is always closed.  For completeness, we provide the proof of Theorem~\ref{th:chaS}. We first establish the following spectral theoretic version of the Hartogs phenomenon, as in Theorem~\ref{L2}. Without loss of generality, we will deal only with $(0, q)$-forms.

\begin{lem}\label{lm:hartogs} Let $\widetilde{\Omega}$ be a domain in a Stein manifold $X$ equipped with a hermitian metric and let $K$ be a compact subset of $\widetilde{\Omega}$.  Let $\Omega=\widetilde{\Omega}\setminus K$. Then
	\begin{enumerate}
		\item $\inf\sigma_e(\square_q^{\widetilde\Omega})>0$ provided $\inf\sigma_e(\square_q^{\Omega})>0$.
		\item $\inf\sigma(\square_q^{\widetilde\Omega})>0$ provided $\inf\sigma(\square_q^{\Omega})>0$
	\end{enumerate}	
\end{lem}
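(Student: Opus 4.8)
The plan is to convert each spectral hypothesis into the equivalent closed-range and finite-dimensionality statements furnished by Proposition~\ref{prop:spectral}, transfer those statements from $\Omega$ to $\widetilde\Omega$, and then read the conclusion back off from the same proposition. Since $\sigma(\square_q)$ and $\sigma_e(\square_q)$ are closed subsets of $[0,\infty)$, the hypothesis $\inf\sigma(\square_q^\Omega)>0$ is the same as $0\notin\sigma(\square_q^\Omega)$, and $\inf\sigma_e(\square_q^\Omega)>0$ is the same as $0\notin\sigma_e(\square_q^\Omega)$. Two ingredients are needed. The first, closedness of the range of $\dbar$, is supplied directly by Lemma~\ref{lm:hartogs1}, which I would apply in each of the two degrees relevant to $\square_q$. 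The second is the cohomological transfer, which is the heart of the matter: the restriction map $r\colon H^{0,q}_{L^2}(\widetilde\Omega)\to H^{0,q}_{L^2}(\Omega)$, $[F]\mapsto[F|_\Omega]$, is injective. (It is well defined because restriction carries $\dbar$-closed forms to $\dbar$-closed forms and $\dbar$-exact forms to $\dbar$-exact forms.)

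To prove injectivity I would repeat the patching construction of Lemma~\ref{lm:hartogs1}. Let $F\in L^2_{0,q}(\widetilde\Omega)$ be $\dbar$-closed with $[F]\in\kernel r$, so that $F|_\Omega=\dbar g$ for some $g\in L^2_{0,q-1}(\Omega)$. Choose a smooth cut-off $\chi$ with compact support in $\widetilde\Omega$, $0\le\chi\le1$, and $\chi\equiv1$ in a neighborhood of $K$; then $\supp\dbar\chi\subset\Omega$. The form $\alpha=F-\dbar\bigl((1-\chi)g\bigr)=\chi F+\dbar\chi\wedge g$ is $\dbar$-closed, lies in $L^2_{0,q}(\widetilde\Omega)$, and is compactly supported in $\widetilde\Omega$; note that $(1-\chi)g$, extended by zero across $K$, belongs to $L^2_{0,q-1}(\widetilde\Omega)$. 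Extending $\alpha$ by zero and solving $\dbar v=\alpha$ with H\"ormander's $L^2$-estimates on a bounded pseudoconvex domain $\widehat\Omega\supset\supset\widetilde\Omega$, the form $u=v+(1-\chi)g$ lies in $L^2_{0,q-1}(\widetilde\Omega)$ and satisfies $\dbar u=F$ on $\widetilde\Omega$. Hence $[F]=0$ and $r$ is injective.

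Granting this, part~(2) follows: by Proposition~\ref{prop:spectral}(3) the hypothesis $0\notin\sigma(\square_q^\Omega)$ gives that $\range(\dbar_{q-1}^\Omega)$ and $\range(\dbar_q^\Omega)$ are closed and $H^{0,q}_{L^2}(\Omega)=0$. Injectivity of $r$ forces $H^{0,q}_{L^2}(\widetilde\Omega)=0$, whence $\range(\dbar_{q-1}^{\widetilde\Omega})=\kernel(\dbar_q^{\widetilde\Omega})$ is automatically closed, while Lemma~\ref{lm:hartogs1} gives closedness of $\range(\dbar_q^{\widetilde\Omega})$; Proposition~\ref{prop:spectral}(3) then yields $0\notin\sigma(\square_q^{\widetilde\Omega})$. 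Part~(1) is identical with ``$=0$'' replaced by ``finite dimensional'': Proposition~\ref{prop:spectral}(2) turns $0\notin\sigma_e(\square_q^\Omega)$ into closedness of $\range(\dbar_{q-1}^\Omega)$ and $\range(\dbar_q^\Omega)$ together with $\dim H^{0,q}_{L^2}(\Omega)<\infty$; Lemma~\ref{lm:hartogs1} transfers both ranges, and injectivity of $r$ gives $\dim H^{0,q}_{L^2}(\widetilde\Omega)\le\dim H^{0,q}_{L^2}(\Omega)<\infty$, so Proposition~\ref{prop:spectral}(2) gives $0\notin\sigma_e(\square_q^{\widetilde\Omega})$.

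The only substantive step is the injectivity of $r$, i.e. the Hartogs-type extension of solutions across the compact hole $K$; this is precisely where Steinness of $X$ enters, through the solvability of $\dbar$ with $L^2$-estimates on the auxiliary pseudoconvex domain $\widehat\Omega$. Everything else is bookkeeping via Proposition~\ref{prop:spectral} and Lemma~\ref{lm:hartogs1}.
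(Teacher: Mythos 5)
Your proposal is correct and follows essentially the same route as the paper: reduce via Proposition~\ref{prop:spectral} and Lemma~\ref{lm:hartogs1} to showing that the restriction map induces an injection $H^{0,q}_{L^2}(\widetilde\Omega)\hookrightarrow H^{0,q}_{L^2}(\Omega)$, and prove that injectivity by the same cut-off-and-solve-on-a-larger-pseudoconvex-domain patching used in Lemma~\ref{lm:hartogs1}. The paper merely cites "repeating the arguments of Lemma~\ref{lm:hartogs1}" where you write the patching out explicitly; the content is identical.
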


\begin{proof}  To prove (1), from Lemma \ref{lm:hartogs1} and Proposition~\ref{prop:spectral}, it suffices to show that $H^{0, q}_{L^2}(\wt{\Omega})$ is finite dimensional provided $H^{0, q}_{L^2}(\Omega)$ is finite
	dimensional.  Let $R\colon \kernel(\dbar^{\wt{\Omega}}_{q}) \to \kernel(\dbar^{\Omega}_{q})$ be the restriction map $\beta\mapsto \beta\vert_\Omega$.  Repeating  arguments
	used in Lemma \ref{lm:hartogs1} (with $\dbar f$ replaced by $\beta$) yields that $R$ induces an injective homomorphism from $H^{0, q}_{L^2}(\wt{\Omega})$ into $H^{0, q}_{L^2}(\Omega)$. Therefore,    $\dim H^{0, q}_{L^2}(\wt{\Omega})\le \dim H^{0, q}_{L^2}(\Omega)$. This concludes the proof of $(1)$ and hence that of $(2)$ . \end{proof}

The following lemma is a spectral theoretic interpretation of Theorem~\ref{dualL2} (in a slightly more general form):

\begin{lem}\label{lm:hartogs4} Let $D$ be a relatively compact open set in a Stein manifold $X$ of dimension $n\ge 2$ with connected complement and Lipschitz boundary. Let $\Omega=X\setminus \overline{D}$.  Then $0\not\in\sigma_e(\square^\Omega_{q})$ for $1\le q\le n-2$ and $0$ is not a limit point for $\sigma_e(\square^\Omega_{n-1})$ if and only if $\dim H^{n, q}_{W^1}(D)<\infty$, $1\le q\le n-2$, and $\dim H^{n, n-1}_{W^1}(D)=0$.	
\end{lem}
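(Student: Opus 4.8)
The plan is to translate the spectral hypotheses into finiteness and separation statements for the $L^2$-cohomology of $\Omega=X\setminus\overline{D}$ by means of Proposition~\ref{prop:spectral}, and then to transport these across the boundary to the $W^1$-cohomology of $D$ through the finite-dimensional refinement of Theorem~\ref{dualL2}. Since every intermediate step is an equivalence, both implications of the lemma are obtained simultaneously.

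First I would read off the left-hand side. For $1\le q\le n-2$, Proposition~\ref{prop:spectral}(2) says that $0\notin\sigma_e(\square^\Omega_q)$ is equivalent to $\range(\dbar^\Omega_{q-1})$ and $\range(\dbar^\Omega_q)$ being closed together with $\dim H^{0,q}_{L^2}(\Omega)<\infty$. For $q=n-1$, the range $\range(\dbar^\Omega_{n-1})$ is automatically closed since $H^{0,n}_{L^2}(\Omega)=0$, so the top-degree form of Proposition~\ref{prop:spectral}(1) identifies the statement that $0$ is not a limit point of $\sigma(\square^\Omega_{n-1})$ with closedness of $\range(\dbar^\Omega_{n-2})$, that is, with $H^{0,n-1}_{L^2}(\Omega)$ being Hausdorff; and because $\sigma$ and $\sigma_e$ share their limit points, the same holds with $\sigma_e$ in place of $\sigma$. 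All the range-closedness conditions are then subsumed by finiteness in degrees $1\le q\le n-2$ and separation in degree $n-1$, so the left-hand side is equivalent to the single package: $\dim H^{0,q}_{L^2}(\Omega)<\infty$ for $1\le q\le n-2$, and $H^{0,n-1}_{L^2}(\Omega)$ is Hausdorff. (When $n=2$ the range $1\le q\le n-2$ is empty and only the degree-one separation survives.)

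Next I would match this package with the right-hand side through Theorem~\ref{dualL2}, applied to $D$ (whose complement $X\setminus D=\Omega$ is connected and whose boundary is Lipschitz). In its stated form that theorem gives, degree by degree, the equivalence of vanishing of $H^{0,q}_{L^2}(\Omega)$ with vanishing of $H^{n,q}_{W^1}(D)$ for $1\le q\le n-2$, together with the equivalence of ``$H^{0,n-1}_{L^2}(\Omega)$ Hausdorff'' and ``$H^{n,n-1}_{W^1}(D)=0$''. The refinement I need replaces the interior vanishings by finite dimensionality on both sides at once, while keeping the degree-$(n-1)$ boundary condition untouched. This follows from the two ingredients already used to prove Theorem~\ref{dualL2}: the $L^2$ Serre duality (Theorem~\ref{serre}; see also \cite{ChaShdual,LaShdualiteL2}), which identifies each Hausdorff group $H^{0,q}_{L^2}(\Omega)$ with the strong dual of the corresponding compactly supported $L^2$-cohomology group of $\Omega$ figuring in condition (ii) of Theorem~\ref{dualL2}, and hence forces equality of their (finite or infinite) dimensions; and the explicit extend-and-solve comparison of the proof of Theorem~\ref{dualL2}, which, because $X$ is Stein and the ambient $(n,\cdot)$-cohomology therefore vanishes, is an \emph{isomorphism} of cohomology groups rather than a mere vanishing-to-vanishing implication. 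Composing these two identifications yields $\dim H^{0,q}_{L^2}(\Omega)=\dim H^{n,q}_{W^1}(D)$ for $1\le q\le n-2$, whence the desired finite-dimensional equivalence; and at degree $n-1$ the statements that $H^{0,n-1}_{L^2}(\Omega)$ is Hausdorff and that $H^{n,n-1}_{W^1}(D)=0$ are equivalent exactly as in the boundary case of Theorem~\ref{dualL2}, where separation upgrades to vanishing via the dichotomy of Theorem~\ref{sep} (equivalently Proposition~4.7 of \cite{LaShdualiteL2}). Combining this with the first step completes the proof.

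The main obstacle I anticipate is making precise this finite-dimensional upgrade of Theorem~\ref{dualL2}: one must verify that the two comparison maps are genuine maps on cohomology inducing an equality of dimensions, not merely the vanishing equivalence recorded in the statement. The Serre-duality half is safe, since mutually dual Hausdorff topological vector spaces have the same dimension; the delicate half is checking that the extend-and-solve construction descends to a well-defined isomorphism on cohomology, for which I would confirm that it is a two-sided inverse of the restriction map modulo the Stein vanishing of $H^{n,\cdot}(X)$. A secondary point to keep in view is that $\Omega=X\setminus\overline{D}$ is not relatively compact; this causes no difficulty, since both Proposition~\ref{prop:spectral} and the duality are purely operator- and complex-theoretic once the relevant ranges are known to be closed.
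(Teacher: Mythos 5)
Your proposal is correct and follows essentially the same route as the paper: Proposition~\ref{prop:spectral} reduces the spectral hypotheses to finite dimensionality of $H^{0,q}_{L^2}(\Omega)$ for $1\le q\le n-2$ plus the Hausdorff property in degree $n-1$, $L^2$ Serre duality transfers this to the compactly supported (equivalently, $\overline{\Omega}$-supported) cohomology, and the extend-and-solve mechanism behind Theorem~\ref{dualL2} is upgraded to preserve finite dimensions, with Proposition~4.7 of \cite{LaShdualiteL2} handling degree $n-1$. The paper carries out the upgrade you flag as the main obstacle by proving the two inequalities $\dim H^{n,q}_{W^1}(D)\le \dim H^{n,q+1}_{\overline{\Omega},L^2}(X)$ and conversely (each under finiteness of the right-hand side) via exactly the extension/restriction construction you describe, rather than by packaging it as an explicit isomorphism.
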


\begin{proof} Note that by Proposition~\ref{prop:spectral},  as already mentioned above, $0$ is not  a limit point of $\sigma_{e}(\square^\Omega_{n-1})$ is equivalent to $\range(\dbar^\Omega_{n-2})$ is closed, which is equivalent to $H^{0, n-1}_{L^2}(\Omega)$ is Hausdorff. By
	\cite[Proposition~4.7]{LaShdualiteL2}, this is also equivalent to $H^{n, n-1}_{W^1}(D)=\{0\}$. In light of Proposition~\ref{prop:spectral} and by $L^2$-Serre duality, $\dim H^{0, q}_{L^2}(\Omega)=\dim H^{n, n-q}_{\overline{\Omega}, L^2}(X)$ for $1\le q\le n-1$. So it remains to show that for $1\le q\le n-2$, $\dim H^{n, q}_{W^1}(D)=\dim H^{n, q+1}_{\overline{\Omega}, L^2}(X)$, provided one of these quantities is finite.
	
	Suppose $\dim H^{n, q+1}_{\overline{\Omega}, L^2}(X)=N<\infty$.  Let $g_j$, $1\le j\le N$, be $\dbar$-closed $(n, q+1)$-forms supported on $\overline{\Omega}$ such that $\{[g_j]\}_{j=1}^N$ spans $H^{n, q+1}_{\overline{\Omega}, L^2}(X)$.  Since $X$ is Stein, there exists $(n, q)$-forms $h_j$ with $L^2_{\rm loc}$-coefficients such that $\dbar h_j=g_j$. (Here, we identify $g_j$ with its extension to $X$ by setting $g_j=0$ outside $\overline{D}$.) Since $g_j$ is supported on $X\setminus D$, $\dbar h_j=0$ on $D$.  Now let $f\in W^1_{n, q}(D)$ be any $\dbar$-closed form.  Since $D$ has Lipschitz boundary, there exists an extension $\tilde f\in W^{1}_{n, q}(X)$ of $f$ to $X$. Since $\dbar\tilde f=0$ on $D$, under the assumption, there exists a $g\in L^2_{n, q}(X)$, supported on $X\setminus D$, such that
	\[
	\dbar \tilde{f}=\dbar g+\sum_{j=1}^N c_j g_j=\dbar g+\sum_{j=1}^N c_j\dbar h_j,
    \]
   for some constants $c_j\in\C$, $1\le j\le N$.  Therefore, there exists $(n, q-1)$-form $u$ with $L^2_{\rm loc}$-coefficients such that
    \[
    \tilde f=g+\sum_{j=1}^N c_j h_j+\dbar u.
    \]
Note that using the interior ellipticity of $\dbar\oplus \dbarstar$, we may choose the forms $h_j$ and $u$ to have $W^1$-coefficients on $D$. Restricting to $D$, we then have $f=\sum_{j=1}^N c_j h_j +\dbar u$, which implies that $\dim H^{n, q}_{W^1}(D)\le N$.

Conversely, suppose $\dim H^{n, q}_{W^1}(D)= N<\infty$.  Let $\{g_j\}_{j=1}^N\subset W^1_{n, q}(D)$ be $\dbar$-closed forms such that $\{[g_j]\}_{j=1}^N$ spans $H^{n, q}_{W^1}(D)$.   Let $f\in L^2_{n, q+1}(X)$ be a $\dbar$-closed form with compact support in $X\setminus D$.  Since $X$ is a Stein manifold, there exists an $(n, q)$-form $u$ with $L^2_{\rm loc}$-coefficients such that $\dbar u=f$. Using the interior ellipticity of $\dbar\oplus\dbarstar$, we may choose $u$ so that $u\in W^1_{n, q}(D)$. Since $f$ is supported on $X\setminus D$, $\dbar u=0$ on $D$. Under the assumption,
there exists $h\in W^1_{n, q-1}(D)$ such that
\[
u=\dbar h+ \sum_{j=1}^N b_j g_j,
\]
for some constants $b_j\in \C$, $1\le j\le N$.  Let $\tilde{g_j}, \tilde{h}\in W^1_{n, q}(X)$ be any extensions of $g_j$ and $h$ respectively from $D$ to $X$ with compact supports in $X$. Let $g=u-\dbar \tilde h-\sum_{j=1}^N b_j \tilde g_j$.  Then $g$ is compactly supported on $X\setminus D$ and
\[
\dbar g=f-\sum_{j=1}^N b_j \dbar\tilde g_j,
\]
which implies that $\dim H^{n, q+1}_{\overline{\Omega}, L^2}(X)\le N$.  \end{proof}

We are now in a position to prove Theorem~\ref{th:chaS}. By Lemma~\ref{lm:hartogs}, Lemma~\ref{lm:hartogs1} and Theorem~\ref{L2} (ii), we know that $H^{0, q}_{L^2}(\wt{\Omega})$, $1\le q\le n-2$, are finite dimensional
and $H^{0, n-1}_{L^2}(\wt{\Omega})$ is trivial. By Theorem~\ref{pcnL2}, $\wt{\Omega}$ is pseudoconvex.

Applying Lemma~\ref{lm:hartogs4} with $X=\wt{\Omega}$, we then have $\dim H^{0, q}_{W^1}(D)<\infty$, $1\le q\le n-2$, and $H^{0, n-1}_{W^1}(D)=\{0\}$.  Applying Theorem~\ref{pcnL2} again, we then conclude that $D$ is pseudoconvex.

\begin{rem}  In Corollary~\ref{eqW1},  we only need to assume that   the boundary is   Lipschitz  when we use Dolbeault cohomology with $W^1$ coefficients.  But in Corollary~\ref{eqL2}, the boundary $D$ needs to be $\cc^2$ smooth.
Note that the necessary condition in Corollary~\ref{eqL2} still holds if the boundary of $D$ is only Lipschitz.
We do not know if we can replace the $C^2$ assumption by the  Lipschitz condition  in Corollary~\ref{eqL2} or Theorem~\ref{th:positivity1}.
We conjecture that  they still hold   if the boundary of $D$ is only Lipschitz. This has been verified     when the inner domain $D$ is a product domain or piecewise smooth pseudoconvex domain  (see the   results in  \cite{ChLaSh}).  In particular, when the domain $\Omega$  is the annulus between a ball and a bidisc in $\mathbb C^2$, one has that
$H^{0,1}_{L^2}(\Omega)$ is Hausdorff.  This yields the $W^1$ estimates for $\dbar$ on bidisc using  Corollary~\ref{caractL2}.  The general  case with Lipschitz holes   is  still an  open problem.
\end{rem}

\begin{rem} Suppose that the number  of holes in $\wt\Omega\setminus \Omega= \ol D$ is  infinite and each component is pseudoconvex,  the boundary $\Omega$ is not Lipschitz. But  one still  can have   $interior \overline \Omega=\Omega$. We do not know if the $\dbar$-Neumann operator $\square_{p,q}$ has closed range
for $1\le q\le n-1$.  In fact, one does not even know if the classical  Neumann operator has closed range.
\end{rem}

\enddocument

\end